\documentclass{amsart}[12pt]

\usepackage{yfonts} %
\usepackage{amssymb} %
\usepackage{amsthm}
\usepackage{array}
\usepackage{booktabs}%
\usepackage{hhline}%
\usepackage{xy} %
\usepackage{epsfig}%
\usepackage{color}%
\usepackage{upgreek}
\usepackage[english]{babel}
\usepackage{epigraph}%
\usepackage{fancybox}%
\setcounter{totalnumber}{2}
\usepackage{shadow}
\usepackage{afterpage}
\usepackage{mathrsfs}
\usepackage{enumitem}
\usepackage{tabularx}
\usepackage{subcaption}
\usepackage{graphicx}
\usepackage{type1cm}
\usepackage{eso-pic}
\usepackage{color}
\usepackage{upgreek}
\usepackage[foot]{amsaddr}

\newtheorem{theorem}{Theorem}
\newtheorem{lemma}{Lemma}
\newtheorem{proposition}{Proposition}
\theoremstyle{definition}
\newtheorem{definition}{Definition}
\newtheorem{remark}{Remark}
\newtheorem{example}{Example}

\theoremstyle{plain}
\newtheorem{corollary}{Corollary}

\newtheorem*{MCP}{Maximum-Continuation Principle}

\newcommand{\vt}{\vspace{.1cm}}

\newcommand{\R}{\mathbb{R} }
\newcommand{\q}{\mathbb{Q} }
\newcommand{\Z}{\mathbb{Z} }
\newcommand{\C}{\mathbb{C} }

\newcommand{\h}{\mathbb{H}}

\newcommand{\hf}{\mathbb{H}_{\mathbb F}^m}
\newcommand{\hfr}{\mathbb{H}_{\mathbb F}^m\times\R}

\newcommand{\s}{\mathbb{S}}

\renewcommand{\rho}{\varrho}
\renewcommand{\theta}{\varTheta}
\renewcommand{\Theta}{\varTheta}
\renewcommand{\Sigma}{\varSigma}
\renewcommand{\tau}{\uptau}
\captionsetup[subfigure]{labelfont=rm}

\usepackage{amsmath}% http://ctan.org/pkg/amsmath

\newcommand{\overbar}[1]{\mkern 1.5mu\overline{\mkern-1.5mu#1\mkern-1.5mu}\mkern 1.5mu}

\newcommand{\transv}{\mathrel{\text{\tpitchfork}}}
\makeatletter
\newcommand{\tpitchfork}{%
  \vbox{
    \baselineskip\z@skip
    \lineskip-.52ex
    \lineskiplimit\maxdimen
    \m@th
    \ialign{##\crcr\hidewidth\smash{$-$}\hidewidth\crcr$\pitchfork$\crcr}
  }%
}
\makeatother

\begin{document}

\title[Hypersurfaces of Constant Higher Order Mean Curvature]
{Hypersurfaces of  Constant Higher Order \\ Mean Curvature in  $M\times\R$}
\author{R. F. de Lima, F. Manfio \and J. P. dos Santos}
\address[A1]{Departamento de Matem\'atica - Universidade Federal do Rio Grande do Norte}
\email{ronaldo@ccet.ufrn.br}
\address[A2]{ICMC–Universidade de São Paulo}
\email{manfio@icmc.usp.br}
\address[A3]{Departamento de Matem\'atica - Universidade de Brasília}
\email{joaopsantos@unb.br}
\subjclass[2010]{53B25 (primary), 53C24,  53C42 (secondary).}
\keywords{higher order mean curvature  -- $r$-minimal --  product space.}
\maketitle

\begin{abstract}
We consider hypersurfaces of products $M\times\R$ with
constant $r$-th mean curvature $H_r\ge 0$ (to be called $H_r$-hypersurfaces), where
$M$ is an arbitrary Riemannian $n$-manifold. We develop a general method
for constructing them,
and employ it to  produce many examples for a variety of manifolds $M,$
including  all simply connected space forms and the hyperbolic spaces $\hf$
(rank $1$ symmetric spaces of noncompact type).
We construct and classify complete rotational $H_r(\ge 0)$-hypersurfaces in
$\hfr$ and in $\s^n\times\R$ as well. They include spheres, Delaunay-type annuli and, in the case of
$\hfr,$  entire graphs.
We also construct and classify
complete $H_r(\ge 0)$-hypersurfaces of $\hfr$ which are invariant
by either parabolic isometries or hyperbolic translations.
We establish a  Jellett-Liebmann-type theorem by showing that
a compact, connected and strictly convex
$H_r$-hypersurface of $\h^n\times\R$ or
$\s^n\times\R$ $(n\ge 3)$  is a rotational embedded sphere.  Other uniqueness
results for complete $H_r$-hypersurfaces of these ambient spaces are obtained.
\end{abstract}

\section{Introduction}

In his pioneering work \cite{rosenberg}, H. Rosenberg initiated the study of minimal and
constant mean curvature hypersurfaces
of product spaces $M\times\R,$ where $M$ is an arbitrary Riemannian $n$-manifold. Since then,
many results on this subject have been obtained by many authors, and considerably understanding
of the geometry of these hypersurfaces has been achieved, mostly in the particular case
$M$ is a simply connected space form.

Following this path, we approach here hypersurfaces of $M\times\R$
with constant
$r$-th mean curvature $H_r\ge 0$ (for some $r\in\{1,\dots, n\}$),
which we call $H_r$-\emph{hypersurfaces}. Let us recall that
the (non normalized) $r$-th mean curvature $H_r$ of a hypersurface is
the $r$-th elementary symmetric polynomial of its principal curvatures, so that
it constitutes a natural extension of the mean curvature ($r=1$)
and the Gauss-Kronecker curvature ($r=n$).

We focus on  constructing and classifying $H_r$-hypersurfaces of products $M\times\R.$
To this end, we use a special type of graph built on
families of parallel hypersurfaces of $M.$
In fact, for any given constant $H_r\ge 0,$
we  obtain $H_r$-graphs in  $M\times\R$ for those Riemannian manifolds $M$ which
admit a local  family of parallel hypersurfaces,
each of them having constant principal curvatures. Following \cite{berndtetal}, such hypersurfaces are called
\emph{isoparametric}.
We point out that many Riemannian  manifolds $M$ admit isoparametric hypersurfaces, such as
space forms, hyperbolic spaces, warped products, and  $\mathbb E(\kappa,\tau)$ homogeneous spaces.

By suitably ``gluing'' pieces of  $H_r$-graphs, we construct
properly embedded $H_r$-hyper\-sur\-faces in $M\times\R$ %including topological $H_r$-spheres,
when $M$ is either the standard $n$-sphere $\s^n$
or one of the hyperbolic spaces $\hf$ (rank $1$ symmetric spaces
of non compact type).
In this setting, we show that there exists a rotational
$H_r$-sphere in $\hf\times\R$  if and only if
$H_r>C_{\mathbb F}(r),$ where the  constant $C_{\mathbb F}(r)$
is defined as the limit of the $r$-th mean curvature of
a geodesic sphere of $\hf$ as its radius goes to infinity.
(In particular, $C_{\mathbb F}(r)$ is positive for $1\le r<n=\dim\hf$ and vanishes for $r=n$.)
On the other hand, as we also show, for any $r\in\{1,\dots ,n\}$ and any constant $H_r>0,$
there exists a rotational $H_r$-sphere in $\s^n\times\R.$

We remark that  rotational hypersurfaces of a general product
$M\times\R$ are defined here as those which are foliated by horizontal
geodesic spheres centered at an ``axis'' $\{o\}\times\R,$ $o\in M.$
(See  Section \ref{sec-rotationalHrhyp} for more details.)

We provide other examples of properly embedded rotational $H_r(>0)$-hyper\-sur\-faces in
$\hfr$ and in $\s^n\times\R$ as well,
including Delaunay-type annuli and, in the case of $\hfr$, entire graphs over $\hf.$
Then, we classify  those complete connected
rotational $H_r(>0)$-hyper\-surfaces of these product spaces
whose height functions are Morse-type (i.e., have isolated critical points), which include
all  the  properly embedded rotational $H_r(>0)$-hypersurfaces we obtain here.

We also construct and classify complete connected $H_r(>0)$-hypersurfaces of
$\hfr$ with no horizontal points (critical points of the height function)
which are invariant by parabolic isometries (i.e, foliated by horospheres)
or by hyperbolic translations (i.e., foliated by equidistant hypersurfaces).
In the latter case, of course, only the real hyperbolic space
$\h_\mathbb R^m:=\h^n$ is considered.

Our methods work equally well for $H_r$-hypersurfaces
with $H_r=0,$ the so called $r$-\emph{minimal} hypersurfaces.
By applying them, we  obtain a one-parameter family of
rotational, properly embedded catenoid-type $r$-minimal  $n$-annuli in $\hfr.$
Similarly, we obtain a one-parameter family of rotational, properly embedded
Delaunay-type $r$-minimal $n$-annuli
in $\s^n\times\R.$ Then, we show that
these annuli are the only complete connected
$r$-minimal rotational hypersurfaces of these product spaces (besides  horizontal hyperplanes
and, in the case $r=n,$ cylinders over geodesic spheres).

Analogously to the case of  $H_r(>0)$-hypersurfaces, we construct and classify
the complete connected $r$-minimal hypersurfaces of $\hfr$ which are invariant by either
parabolic isometries or  hyperbolic translations.

The study of $H_r$-hypersurfaces of a Riemannian
manifold leads naturally to considerations on their uniqueness properties.
On this matter, Montiel and Ros \cite{montiel-ros} (see also \cite{korevaar})
showed the following Alexandrov-type theorem:
\vt

\emph{The only compact, connected, and embedded  $H_r$-hypersurfaces
in $\R^n,$ $\h^n,$ or an open hemisphere of \,$\s^n$ are geodesic spheres.}

\vt

In \cite{elbert-earp}, this result was extended to the context of $H_r$-hypersurfaces of $\h^n\times\R,$
where the  geodesic spheres in the statement are replaced by rotational spheres.

Here, we establish uniqueness results for  rotational $H_r$-spheres
of $\h^n\times\R$ and $\s^n\times\R$, $n\ge 3.$ The case $n=2$ was settled  in
\cite{abresch-rosenberg} (for $r=1$) and in \cite{esp-gal-rosen} (for $r=2$).
More precisely, we show that any compact connected
strictly convex $H_r$-hypersurface $\Sigma$ of $\h^n\times\R$ or
$\s^n\times\R$ $(n\ge 3)$
is necessarily an embedded rotational sphere. Assuming
$\Sigma$ complete, instead of compact,  the same conclusion
holds if, in addition,
the height function of $\Sigma$ has a critical point and, in the case
$\Sigma\subset\h^n\times\R$,
the least principal curvature of $\Sigma$ is bounded away from zero. Finally, we
show that, for $n\ge 3,$ any connected,  properly immersed, and strictly
convex $H_r(>0)$-hypersurface of  \,$\s^n\times\R$ is necessarily  an embedded rotational $H_r$-sphere.

It is worth mentioning that these  uniqueness results constitute applications of
the main theorems in \cite{delima}, which concern convexity properties of
hypersurfaces in $M\times\R,$  $M$ being  either a Hadamard manifold or the sphere $\s^n.$
Besides, the noncompact cases are  based on  height estimates we establish here for
strictly convex vertical graphs in arbitrary products $M\times\R.$

The paper is organized as follows. In Section \ref{sec-preliminaries},
in addition to the usual setting of notation and basic concepts, we include
a brief presentation of the hyperbolic spaces $\hf$ and of the
Maximum-Continuation Principle for $H_r$-hypersurfaces.
In Section \ref{sec-Hrgraphs}, we introduce
graphs on parallel hypersurfaces and establish two  key lemmas.
In Section \ref{sec-rotationalHrhyp} (resp. Section \ref{sec-rotationalr-minimal}), we  construct and classify
complete rotational $H_r(>0)$-hypersurfaces (resp. $r$-minimal hypersurfaces) in $\hf\times\R$ and $\s^n\times\R,$ whereas in
Section \ref{sec-nonrotationalHrhyp} (resp. Section \ref{sec-translationalr-minimal})
we do the same for complete translational ones (i.e, invariant by either parabolic or hyperbolic isometries).
In the concluding Section \ref{sec-uniqueness}, we prove  the aforementioned uniqueness results.

\section{Preliminaries} \label{sec-preliminaries}
Let $\Sigma^n$, $n\ge 2,$
be an oriented hypersurface of  a Riemannian manifold $\overbar M^{n+1}.$ Set
$\overbar\nabla$ for the Levi-Civita connection of $\overbar M,$
$N$ for the unit normal field of $\Sigma,$ and   $A$ for its shape operator  with respect to
$N,$ so that
\[
AX=-\overbar\nabla_XN,  \,\, X\in T\Sigma,
\]
where  $T\Sigma$ stand for  the tangent bundle of $\Sigma$.
The principal curvatures of $\Sigma,$ that is,
the eigenvalues of the shape operator  $A,$ will be denoted by $k_1\,, \dots ,k_n$.

Given an integer $r\ge 0,$ we define the (non normalized)
$r$-th \emph{mean curvature} $H_r$ of the hypersurface $\Sigma\subset\overbar M$ as:
\begin{equation}
H_r:=\left\{
\begin{array}{cl}
  1 & {\rm if} \,\, r=0. \\ [1ex]
  \displaystyle\sum_{i_1<\cdots <i_r}k_{i_1}\dots k_{i_r} & {\rm if} \,\, 1\le r\le n. \\ [3ex]
  0 & {\rm if} \,\, r>n.
\end{array}
\right.
\end{equation}

Notice that $H_1$ and $H_n$ are  the  non normalized mean curvature  and
Gauss-Kronecker curvature functions of $\Sigma,$ respectively, i.e.,
\[
H_1={\rm trace}\,A\quad\text{and}\quad H_n=\det A.
\]

\begin{definition}
With the above notation, given a constant $H_r\in\R,$ we say that $\Sigma\subset\overbar M$ is an
$H_r$-\emph{hypersurface} of $\overbar M$ if
its $r$-th mean curvature is constant and equal to $H_r.$ In the case $H_r=0$,
we say that $\Sigma$ is an $r$-\emph{minimal} hypersurface of $\overbar M.$
\end{definition}

\begin{definition}
A hypersurface $\Sigma\subset\overbar M$ is said to be \emph{convex}
at $x\in\Sigma$ if, at this point,
all the nonzero principal curvatures
have the same sign. If, in addition,  none of  these principal curvatures is zero, then
$\Sigma$ is said to be \emph{strictly convex} at $x.$ We call  $\Sigma$
\emph{convex} (resp. \emph{strictly convex}) if it is convex (resp. {strictly convex}) at all of
its points.
\end{definition}

The ambient spaces we shall consider are the products
$\overbar M^{n+1}=M^n\times\R$ --- where $M^n$ is some Riemannian manifold ---
endowed with the standard product metric:
\[\langle\,,\,\rangle=\langle\,,\,\rangle_{M}+dt^2.\]
In this setting, we denote the  gradient  of the projection $\pi_{\R}$ of $M\times\R$ by $\partial_t$\,,
which is easily seen to be a parallel field on $M\times\R.$

Let $\Sigma$ be a hypersurface of $M\times\R$. Its
\emph{height function} $\xi$ and its \emph{angle function} $\Theta$
are defined by the following identities:
\[
\xi(x):=\pi_{\scriptscriptstyle\R}|_\Sigma \quad\text{and}\quad \Theta(x):=\langle N(x),\partial_t\rangle, \,\, x\in\Sigma.
\]

A critical point of $\xi$ is called \emph{horizontal}, whereas a point on which $\theta$ vanishes is
called \emph{vertical}. Notice that $x\in\Sigma$ is horizontal if and only if $\theta(x)=\pm 1.$

We shall denote the gradient field and the Hessian of a function $\zeta$ on $\Sigma$  by
$\nabla\zeta$ and ${\rm Hess}\,\zeta,$ respectively. It is easily checked that
\begin{equation} \label{eq-gradxi}
\nabla\xi=\partial_t-\Theta N \quad\text{and}\quad \nabla\theta=-A\nabla\xi.
\end{equation}

From \eqref{eq-gradxi}, for all $X,Y\in T\Sigma,$  one has
$\overbar{\nabla}_{X}\,\nabla\xi=-(\theta\overbar{\nabla}_XN+X(\theta)N).$
Hence,
\begin{equation}\label{eq-hessianheightfunction}
{\rm Hess}\,\xi(X,Y)=\Theta\langle AX,Y\rangle \,\,\, \forall X, Y\in T\Sigma.
\end{equation}

Given $t\in\R,$ the set  $P_t:=M\times\{t\}$ is called a \emph{horizontal hyperplane}
of $M\times\R.$ Horizontal hyperplanes are all isometric to $M$ and totally geodesic in
$M\times\R.$ In this context,
we call a transversal intersection $\Sigma_t:=\Sigma\transv P_t$ a \emph{horizontal section}
of $\Sigma.$ Any horizontal section $\Sigma_t$ is a hypersurface of $P_t$\,.
So, at any point $x\in\Sigma_t\subset\Sigma,$ the tangent space $T_x\Sigma$ of $\Sigma$ at $x$ splits
as the orthogonal sum
\begin{equation}\label{eq-sum}
T_x\Sigma=T_x\Sigma_t\oplus {\rm Span}\{\nabla\xi\}.
\end{equation}

We will adopt the notation $\q_\epsilon^n$
for the simply connected space form of constant sectional
curvature $\epsilon\in\{0,1,-1\};$ the Euclidean space $\R^n$ ($\epsilon=0$),
the unit sphere $\s^n$ ($\epsilon =1$), and the hyperbolic space $\h^n$ ($\epsilon=-1$).

\subsection{The hyperbolic spaces $\hf$}
Many of our results in this paper involve the Riemannian manifolds known as \emph{hyperbolic spaces}, which
include the canonical (real) $n$-dimensional hyperbolic space $\h^n.$
These manifolds are precisely the rank $1$ symmetric spaces of non-compact type, which can be
described through  the four normed division algebras:
$\R$ (real numbers), $\C$ (complex numbers), $\mathbb K$ (quaternions) and $\mathbb O$ (octonions).
They are denoted by
\[\h_\R^m, \,\,\h_\C^m, \,\, \h_{\mathbb K}^m \,\,\, \text{and} \,\,\, \h_{\mathbb O}^2, \,\,\,\, m\ge 1,\]
and called \emph{real hyperbolic space, complex hyperbolic space, quaternionic hyperbolic space} and
\emph{Cayley hyperbolic plane,} respectively.

We will adopt the unified notation $\hf$ for the hyperbolic spaces, where $m=2$ for $\mathbb F=\mathbb O.$
The real dimension  of $\hf$ is $n=m\dim \mathbb F.$ In particular,
$\h_{\mathbb O}^2$ has dimension $n=16.$ We will keep
the standard notation $\h^n$
for the real hyperbolic space $\h_\R^n$ and assume $n\ge 2.$

Denoting by  $|\,\,|$ the norm in $\mathbb F\ne\mathbb O$, and taking
in $\mathbb F^m$ standard coordinates $(z_1,\dots, z_m),$ we have that
$\hf$ is modeled by the unit ball
\[
B:=\left\{(z_1\,,\dots, z_m)\in\mathbb F^m\,;\, \sum_{i=1}^{m}|z_i|^2<1\right\}
\]
equipped with the Hermitian form whose coefficients in these coordinates are
\[
g_{ij}:=\frac{\delta_{ij}}{1-\sum_{i=1}^{m}|z_i|^2}+\frac{z_i\bar z_j}{(1-\sum_{i=1}^{m}|z_i|^2)^2}\,,
\]
where the bar denotes conjugation.

For $\mathbb F=\R,$ the metric $g$ defined by $g_{ij}$
correspond to the Klein model for $\h^n.$ Also, for $m=1$, $g$ reduces
to the canonical Poincaré metric of $\h^n$. In particular,
$\h_{\mathbb C}^1$ is isometric to $\h^2$, and
$\h_{\mathbb K}^1$ is isometric to $\h^4$.
(See \cite{chen-greenberg}
for a detailed discussion on the hyperbolic spaces $\hf$, $\mathbb F\ne\mathbb O.$)

The description of a model for the Cayley hyperbolic plane $\h_\mathbb O^2$ is more involved. We refer
to \cite{aravinda-farrel} and the references therein for an account on this space.

We remark that, being symmetric, the
hyperbolic spaces $\hf$ are homogeneous.
In addition, they
are included in a distinguished class of
Lie groups known as \emph{Damek-Ricci spaces} (see Example \ref{exam-damek-ricci} in the next
section). In this context, it can be shown that any hyperbolic
space $\hf$ is a Hadamard-Einstein manifold
with nonconstant (except for $\h^n$) sectional curvatures pinched between $-1$ and $-1/4$
(cf. \cite[Sections 4.1.9 and 4.2]{berndtetal}).

\subsection{The Maximum-Continuation Principle}
Two major tools employed in the study of hypersurfaces of  constant curvature
are the Maximum Principle and the Continuation Principle for solutions of
elliptic PDE's.
In the case of the $r$-th mean curvature $H_r$, it was shown in \cite[Proposition 3.2]{cheng-rosenberg}
that a hypersurface $\Sigma$ of a Riemannian manifold $M$ with $H_r>0$ which is strictly convex at a point
is given locally by a graph of a solution of an elliptic PDE.
From the Continuation Principle (see, e.g., \cite{kazdan, protter}), if two such solutions are defined in a domain $\Omega$ and
coincide in a subdomain $\Omega'\subset\Omega,$ then they coincide in $\Omega.$
These facts, together with \cite[Theorem 1.1]{fontenele-silva},  give the following
result.

\begin{MCP}
Let  $\Sigma_1\,, \Sigma_2\subset M$ be  complete connected $H_r(>0)$-hypersurfaces of a
Riemannian manifold $M$ which are tangent at a point $x\in\Sigma_1\cap\Sigma_2$\,.
Let $N(x)\in T_x\Sigma_i^\perp$ $(i=1,2)$ be the common unit normal and assume that
$\Sigma_2$ is strictly convex at one of its points. Then, if
(near $x$) $\Sigma_1$ remains above
$\Sigma_2$ with respect to $N(x),$  one has $\Sigma_1=\Sigma_2$\,.
\end{MCP}

\section{$H_r$-Graphs on  Parallel Hypersurfaces}  \label{sec-Hrgraphs}
In this section, we give a detailed description of   graphs in $M\times\R$
which are built on families of parallel hypersurfaces of $M.$ As we mentioned before,
they will  constitute our main tool for constructing
$H_r$-hypersurfaces in product spaces $M\times\R.$

With this purpose, consider
an isometric immersion
\[f:M_0^{n-1}\rightarrow M^n\]
between two Riemannian manifolds $M_0^{n-1}$ and $M^n,$
and suppose that there is a neighborhood $\mathscr{U}$
of $M_0$ in $TM_0^\perp$  without focal points of $f,$ that is,
the restriction of the normal exponential map $\exp^\perp_{M_0}:TM_0^\perp\rightarrow M$ to
$\mathscr{U}$ is a diffeomorphism onto its image. In this case, denoting by
$\eta$ the unit normal field  of $f,$   there is an open interval $I\owns 0,$
such that, for all $p\in M_0,$ the curve
\begin{equation}\label{eq-geodesic}
\gamma_p(s)=\exp_{\scriptscriptstyle M}(f(p),s\eta(p)), \, s\in I,
\end{equation}
is a well defined geodesic of $M$ without conjugate points. Thus,
for all $s\in I,$
\[
\begin{array}{cccc}
f_s: & M_0 & \rightarrow & M\\
     &  p       & \mapsto     & \gamma_p(s)
\end{array}
\]
is an immersion of $M_0$ into $M,$ which is said to be \emph{parallel} to $f.$
Observe that, given $p\in M_0$, the tangent space $f_{s_*}(T_p M_0)$ of $f_s$ at $p$ is the parallel transport of $f_{*}(T_p M_0)$ along
$\gamma_p$ from $0$ to $s.$ We also remark that,  with the induced metric,
the unit normal  $\eta_s$  of $f_s$ at $p$ is given by
\[\eta_s(p)=\gamma_p'(s).\]

\begin{definition}
Let $\phi:I\rightarrow \phi(I)\subset\R$ be an increasing diffeomorphism, i.e., $\phi'>0.$
With the above notation, we call the set
\begin{equation}\label{eq-paralleldescription1}
\Sigma:=\{(f_s(p),\phi(s))\in M\times\R\,;\, p\in M_0, \, s\in I\},
\end{equation}
the \emph{graph} determined by $\{f_s\,;\, s\in I\}$ and $\phi,$ or $(f_s,\phi)$-\emph{graph}, for short.
\end{definition}

Notice that, for a given $(f_s,\phi)$-graph $\Sigma$, and
for any $s\in I$\,, $f_s(M_0)$ is  the projection on $M$ of the horizontal
section $\Sigma_{\phi(s)}\subset\Sigma,$
that is, these sets are the {level hypersurfaces}
of    $\Sigma.$

For an arbitrary  point $x=(f_s(p),\phi(s))$ of such a graph $\Sigma,$ one has
\[T_x\Sigma=f_{s_*}(T_p M_0)\oplus {\rm Span}\,\{\partial_s\}, \,\,\, \partial_s=\eta_s+\phi'(s)\partial_t.\]
So, a  unit normal  to $\Sigma$ is
\begin{equation} \label{eq-normal}
N=\frac{-\phi'}{\sqrt{1+(\phi')^2}}\eta_s+\frac{1}{\sqrt{1+(\phi')^2}}\partial_t\,.
\end{equation}
In particular, its  angle function  is
\begin{equation} \label{eq-thetaparallel}
\Theta=\frac{1}{\sqrt{1+(\phi')^2}}\,\cdot
\end{equation}

A key property of
  $(f_s,\phi)$-graphs is that the trajectories of
$\nabla\xi$ on them are lines of curvature, that is, $\nabla\xi$ is one of its  principal directions.
(Notice that, by \eqref{eq-thetaparallel}, $0<\Theta<1,$ so $\nabla\xi$ never vanishes on an $(f_s,\phi)$-graph.)
More precisely (cf. \cite{delima-roitman, tojeiro}),
\begin{equation}\label{eq-principaldirection}
A\nabla\xi=\frac{\phi''}{(\sqrt{1+(\phi')^2})^3}\nabla\xi.
\end{equation}

We  point out that,
%A key feature of the trajectories of $\nabla\xi$ on an $(f_s\phi)$-graph $\Sigma,$
besides being lines of curvature, the trajectories of $\nabla\xi$ on an $(f_s,\phi)$-graph $\Sigma$,
when properly reparametrized, are also geodesics. This follows from the fact that
$\theta$, and consequently $\|\nabla\xi\|,$ is constant along the horizontal sections
of $\Sigma$ (see \cite[Lemma 5]{tojeiro}). It should also be noticed that
these trajectories  project on the geodesics $\gamma_p=\gamma_p(s)$
given by \eqref{eq-geodesic} (Fig. \ref{fig-phigraph}).

\begin{figure}[htbp]
\includegraphics{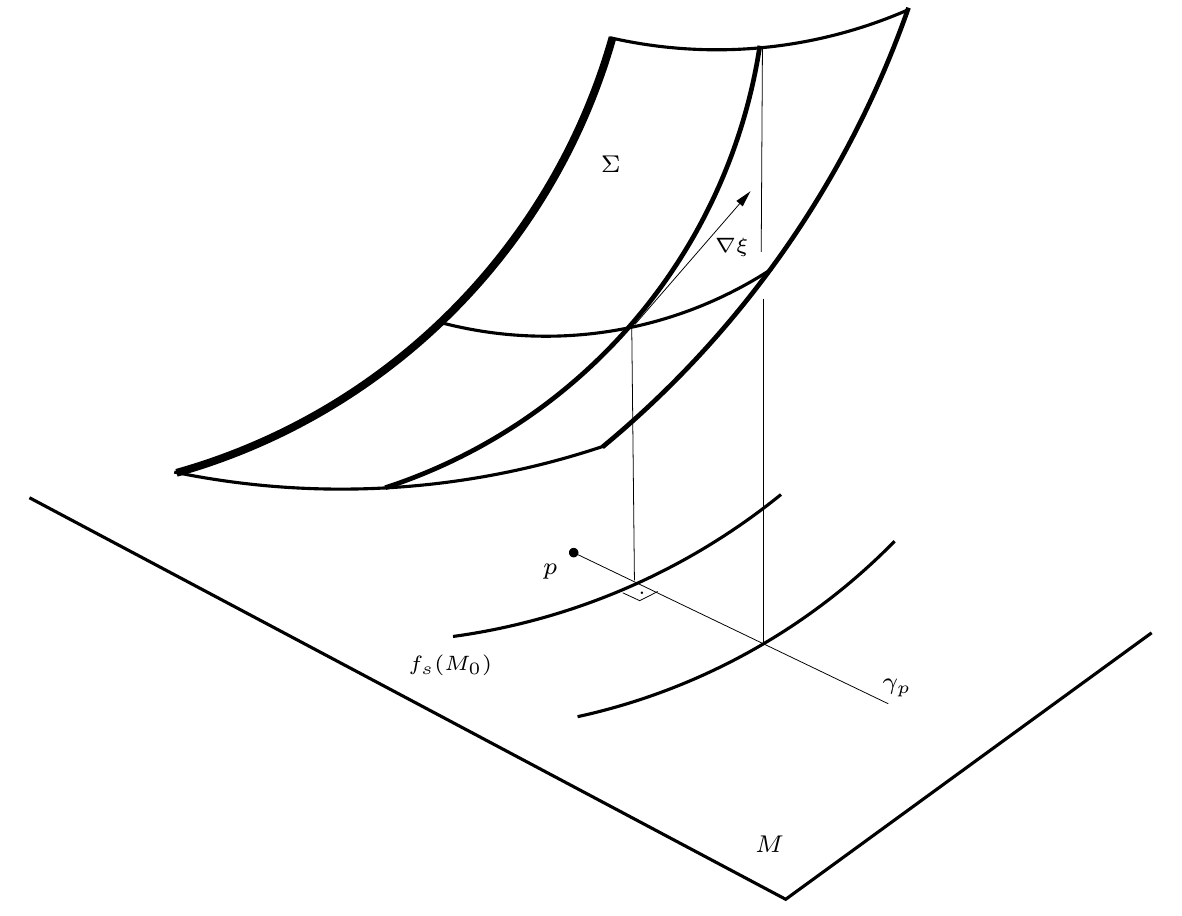}
\caption{Trajectory of $\nabla\xi$ on an $(f_s,\phi)$-graph.}
\label{fig-phigraph}
\end{figure}

Let us compute now the principal curvatures of an
$(f_s,\phi)$-graph $\Sigma.$
For that, let $\{X_1\,,\dots ,X_n\}$ be
the orthonormal frame of principal directions of $\Sigma$
in which $X_n=\nabla\xi/\|\nabla\xi\|.$ In this case, for $1\le i\le n-1,$
the fields $X_i$  are all horizontal, that is, tangent to $M$ (cf. \eqref{eq-sum}).
Therefore, setting
\begin{equation}\label{eq-rho}
\rho:=\frac{\phi'}{\sqrt{1+(\phi')^2}}
\end{equation}
and considering \eqref{eq-normal}, we have, for all $i=1,\dots ,n-1,$ that
\[
k_i=\langle AX_i,X_i\rangle=-\langle\overbar\nabla_{X_i}N,X_i\rangle=\rho\langle\overbar\nabla_{X_i}\eta_s,X_i\rangle=-\rho k_i^s,
\]
where $k_i^s$ is the $i$-th principal curvature of $f_s\,.$ Also,
it follows from \eqref{eq-principaldirection}
that $k_n=\rho'.$ Thus, the array of principal curvatures of the $(f_s,\phi)$-graph $\Sigma$ is
\begin{equation}\label{eq-principalcurvatures}
k_i=-\rho k_i^s \,\, (1\le i\le n-1) \quad\text{and}\quad k_n=\rho'.
\end{equation}

Now, considering the above identities and writing,  for $1\le r\le n,$
\[
  H_r = \sum_{i_1<\cdots <i_r\ne n}k_{i_1}\dots k_{i_r}+\sum_{i_1<\cdots <i_{r-1}}k_{i_1}\dots k_{i_{r-1}}k_{n}\,,
\]
we have that the $r$-th mean curvature of the $(f_s,\phi)$-graph $\Sigma$ is
\begin{equation} \label{eq-Hr}
H_r= (-1)^rH_r^s\rho^r+(-1)^{r-1}H_{r-1}^s\rho^{r-1}\rho',
\end{equation}
where $H_r^s$ denotes the $r$-th mean curvature of $f_s.$

Due to equality \eqref{eq-Hr},
the function defined in \eqref{eq-rho} --- to be called the $\rho$-\emph{function} of
the $(f_s,\phi)$-graph $\Sigma$ --- will play a major role in the sequel.
We remark that, up to a constant, the $\rho$-function of $\Sigma$ determines its
$\phi$-function.
Indeed, it follows from equality \eqref{eq-rho} that
\begin{equation}\label{eq-phi}
\phi(s)=\int_{s_0}^{s}\frac{\rho(u)}{\sqrt{1-\rho^2(u)}}du+\phi(s_0), \,\,\, s_0\in I.
\end{equation}

We introduce  now a special type of family of parallel hypersurfaces which will
be used for constructing $H_r$-hypersurfaces in $M\times\R.$

\begin{definition}
Following \cite{berndtetal}, we call a parallel family $\{f_s:M_0\rightarrow M\,;\, s\in I\}$ \emph{isoparametric} if,
for each $s\in I,$ any principal curvature of $f_s$ is  constant (depending on $s$).
If so, each hypersurface $f_s$ is also called \emph{isoparametric.}
\end{definition}

We should mention that there is some mismatch regarding the nomenclature
for isoparametric hypersurfaces. In some contexts,
isoparametric hypersurfaces are defined as those which, together
with its parallel nearby hypersurfaces, have constant mean curvature.
It is shown that some manifolds $M$ admit  hypersurfaces which are
isoparametric in this sense, and non isoparametric  as we defined.

Let $\Sigma$ be an $(f_s,\phi)$-graph such that the parallel family
$\{f_s\,;\, s\in I\}$ is isoparametric. Then, for any
$r=1,\dots ,n-1,$ the $r$-th mean curvature of $f_s$ is a function of
$s$ alone, which  we assume to be no vanishing.
In this setting, writing $\tau:=\rho^r$, and
considering \eqref{eq-Hr} with $H_r$ constant, we obtain
the following result, which turns out to be our main lemma.

\begin{lemma} \label{lem-parallel}
Let $\{f_s:M_0\rightarrow M\,;\, s\in I\}$ be an
isoparametric family of hypersurfaces whose $r(<n)$-mean curvatures $H_r^s$ never vanish.
Given $r\in\{1,\dots ,n\}$ and $H_r\in\R,$ let $\tau$ be a solution of the first order
differential equation
\begin{equation}  \label{eq-difequation}
y'=a(s)y+b(s), \,\,\, s\in I,
\end{equation}
where the coefficients $a$ and $b$ are the functions
\begin{equation}  \label{eq-a&b}
a(s):=\frac{rH_r^s}{H_{r-1}^s}\quad{\rm and}\quad b(s):=\frac{(-1)^{r-1}rH_r}{H_{r-1}^s}\,\cdot
\end{equation}
Then, if  \,$0<\tau<1,$  the
$(f_s,\phi)$-graph $\Sigma$ with $\rho$-function $\tau^{1/r}$  is an $H_r$-hypersurface of
the product $M\times\R.$ Conversely, if an $(f_s,\phi)$-graph $\Sigma$ has constant $r$-th mean curvature $H_r$\,,
then $\tau:=\rho^r$ is a solution of \eqref{eq-difequation}.
\end{lemma}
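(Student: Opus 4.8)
The plan is to reduce the statement to a direct algebraic manipulation of the curvature formula \eqref{eq-Hr}, exploiting the hypothesis that the family $\{f_s\}$ is isoparametric so that $H_r^s$ and $H_{r-1}^s$ depend only on $s$. First I would start from \eqref{eq-Hr}, namely
\[
H_r = (-1)^rH_r^s\rho^r + (-1)^{r-1}H_{r-1}^s\rho^{r-1}\rho',
\]
and substitute $\tau = \rho^r$, so that $\tau' = r\rho^{r-1}\rho'$, i.e. $\rho^{r-1}\rho' = \tau'/r$. This turns the right-hand side into $(-1)^rH_r^s\tau + (-1)^{r-1}H_{r-1}^s\tau'/r$. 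Setting this equal to the constant $H_r$ and solving for $\tau'$ gives
\[
\tau' = \frac{rH_r^s}{H_{r-1}^s}\,\tau + \frac{(-1)^{r-1}rH_r}{H_{r-1}^s} = a(s)\tau + b(s),
\]
which is exactly \eqref{eq-difequation} with the coefficients \eqref{eq-a&b}. Note this manipulation is reversible, which will give the converse at no extra cost.

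The only genuine point requiring care is that the correspondence between $\rho$-functions and $(f_s,\phi)$-graphs is well defined under the stated constraints. So the steps I would carry out are: (i) observe that $H_{r-1}^s$ never vanishes on $I$ — this needs a brief remark, since the hypothesis is stated for $H_r^s$; in fact for an isoparametric family with $H_r^s\neq 0$ one has $H_{r-1}^s\neq 0$ as well (if all the relevant $k_i^s$ were such that $H_{r-1}^s=0$ while $H_r^s\neq 0$, this still forces no contradiction in general, so one should instead simply add $H_{r-1}^s\neq 0$ to the reading of the hypothesis, or note that $a(s),b(s)$ are the coefficients of the ODE precisely on the open subset where $H_{r-1}^s\neq 0$); (ii) given a solution $\tau$ of \eqref{eq-difequation} with $0<\tau<1$ on $I$, set $\rho := \tau^{1/r}\in(0,1)$, which is smooth since $\tau>0$; (iii) recover $\phi$ from $\rho$ via \eqref{eq-phi}, which is legitimate precisely because $0<\rho<1$ makes the integrand $\rho/\sqrt{1-\rho^2}$ well defined and the resulting $\phi$ an increasing diffeomorphism onto its image (so that the $(f_s,\phi)$-graph in the sense of the earlier definition exists); (iv) for this graph, the $\rho$-function in the sense of \eqref{eq-rho} is exactly the $\rho$ we started from, so \eqref{eq-principalcurvatures} and \eqref{eq-Hr} apply and, by the computation above, $H_r$ is constant and equal to the prescribed value.

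For the converse, I would simply run the same reversible computation: if an $(f_s,\phi)$-graph $\Sigma$ has constant $r$-th mean curvature $H_r$, then its $\rho$-function satisfies \eqref{eq-Hr} with the left-hand side constant, and substituting $\tau=\rho^r$ as above shows $\tau$ solves \eqref{eq-difequation}. I expect the main (and really only) obstacle to be bookkeeping rather than mathematical depth: one must be scrupulous that $0<\tau<1$ is exactly the condition ensuring both that $\rho=\tau^{1/r}$ is a legitimate smooth $\rho$-function and that the passage from $\rho$ to $\phi$ via \eqref{eq-phi} produces an admissible increasing diffeomorphism, so that the object constructed is genuinely an $(f_s,\phi)$-graph to which formula \eqref{eq-Hr} applies. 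Everything else is the one-line algebraic identity $\rho^{r-1}\rho' = \tau'/r$.
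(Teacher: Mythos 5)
Your proposal is correct and follows exactly the route the paper takes: the lemma is obtained by substituting $\tau=\rho^r$ (so $\rho^{r-1}\rho'=\tau'/r$) into \eqref{eq-Hr} and solving for $\tau'$, the computation being reversible for the converse. Your added remarks --- that $H_{r-1}^s\neq 0$ must be read into the hypothesis and that $0<\tau<1$ is what makes the passage from $\rho$ to $\phi$ via \eqref{eq-phi} legitimate --- are sound points the paper leaves implicit, but they do not change the argument.
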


Regarding equation \eqref{eq-difequation}, recall that
its general solution is
\begin{equation}\label{eq-solutiondifequation}
\tau(s)=\frac{1}{\mu(s)}\left(\tau_0+\int_{s_0}^{s}{b(u)}{\mu(u)}du\right), \,\, s_0, s\in I, \, \tau_0\in\R,
\end{equation}
where $\mu$ is the exponential function
\[\mu(s)=\exp\left(-\int_{s_0}^sa(u)du\right), \,\,\, s\in I.\]

It follows from Lemma \ref{lem-parallel} that, as long as $M$ admits isoparametric
hypersurfaces with non vanishing $r$-th mean curvature,
for any $H_r\in\R,$  there exist $H_r$-graphs in $M\times\R$. (Notice that,
the interval $I$ and the constant $\tau_0$ in \eqref{eq-solutiondifequation} can be chosen
in such a way that the corresponding solution $\tau$ of \eqref{eq-difequation} satisfies $0<\tau<1$.)
This includes, as a trivial case,  the Euclidean space $\R^n.$
In the next examples, we shall consider other manifolds $M$ to which  Lemma \ref{lem-parallel}  applies.

\begin{example}[\emph{sphere $\s^n$}] It is a well known fact that isoparametric hypersurfaces
in $\s^n$  are abundant and include all its geodesic spheres.
In fact, the classification of the isoparametric hypersurfaces of $\s^n$ is a long stand open problem
(see, e.g., \cite{dominguez-vazquez}).
\end{example}

\begin{example}[\emph{warped products}]
Let $M=I\times_\omega F^{n-1}$ be a warped product, where the basis $I\subset\R$ is an open interval and
the fiber $F$ is an arbitrary $(n-1)$-dimensional manifold. For each $s\in I,$
define $f_s$ as the standard  immersion $F\hookrightarrow \{s\}\times_\omega F\subset M.$
It is well known that, with the induced metric,  $\mathscr F=\{f_s\,;\, s\in I\}$ is a parallel
family of totally umbilical hypersurfaces
of $M$ with constant principal  curvatures $\omega'/\omega$ (see, e.g., \cite{bishop-oneill}).
In particular, $\mathscr F$ is isoparametric. Hence, if $\omega'$ never vanishes,
Lemma \ref{lem-parallel} applies to $M.$
\end{example}

\begin{example}[\emph{Damek-Ricci spaces}] \label{exam-damek-ricci}
Let us consider the Riemannian manifolds known as \emph{Damek-Ricci spaces}.
These are Lie groups endowed with a left invariant metric with especial
 properties (see \cite{berndtetal, dominguez-vazquez}). For instance,
all Damek-Ricci spaces are both Hadamard and  Einstein manifolds.
As we have mentioned, the hyperbolic spaces $\hf$ are  Damek-Ricci spaces.
In fact, they are the only ones which are symmetric. 
Their isoparametric hypersurfaces include 
their geodesic spheres, as well as their horospheres.
%(cf. \cite[Theorem 7 - Chapter 4]{berndtetal}).
Finally, we point out  that,
in \cite{diazramos-dominguezvazquez}, the authors obtained
families of isoparametric hypersurfaces with non vanishing
$r$-th curvatures in Damek-Ricci harmonic spaces.
%Thus, Lemma \ref{lem-parallel} applies to all these Damek-Ricci spaces.
%From Lemma \ref{lem-parallel}, given $H_r\in\R,$ each of these families
%can be used for constructing  $H_r$-graphs in $M\times\R,$
%where $M$ is the corresponding Damek-Ricci space.
\end{example}

\begin{example}[$E(\kappa,\tau)$-\emph{spaces}]
In \cite{dominguezvazquez-manzano}, it was proved that there exist
isoparametric families of
parallel surfaces with nonzero constant principal curvatures  in
$\mathbb E(k,\tau)$ spaces satisfying $k-4\tau^2\ne 0.$ (Those include
the products $\h^2\times\R$ and $\s^2\times\R$, the Heisenberg space ${\rm Nil}_3$,
the Berger spheres, and  the universal cover of the special linear group ${\rm SL}_2(\R)$).
%As in the previous examples, these families may be used for
%constructing $H_r$-hypersurfaces in $\mathbb E(k,\tau)\times\R$ for any $H_r\in\R.$
\end{example}

In the next two sections we construct properly embedded $H_r$-hypersurfaces in products
$M\times\R$ by suitably ``gluing''  $H_r$-graphs. To this task,
the following elementary fact will be considerably helpful.

\begin{lemma} \label{lem-convergenceintegral}
Let $\rho:[a_1,a_2]\rightarrow\R$ be a differentiable function which satisfies:
\[0<\rho{|_{(a_1,a_2)}}<1.\]
Assume that one (or both) of the following hold:
\begin{itemize}[parsep=1ex]
\item[\rm i)] $\rho(a_2)=1$ and $\rho'(a_2)>0.$
\item[\rm ii)] $\rho(a_1)=1$ and $\rho'(a_1)<0.$
\end{itemize}
Under these conditions, there exists $\delta >0$ such that
the improper integral
\begin{equation}  \label{eq-integral}
\int_{a_2-\delta}^{a_2}\frac{\rho(s)ds}{\sqrt{1-\rho^2(s)}}
\end{equation}
is convergent if \,{\rm (i)} occurs. Analogously, the improper  integral
\[
\int_{a_1}^{a_1+\delta}\frac{\rho(s)ds}{\sqrt{1-\rho^2(s)}}
\]
is convergent if {\rm (ii)} occurs.
\end{lemma}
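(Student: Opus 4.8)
The plan is to treat case (i) directly (case (ii) being symmetric via the change of variable $s\mapsto a_1+a_2-s$). Near $s=a_2$ the only source of divergence is the vanishing of $\sqrt{1-\rho^2(s)}$, since the numerator $\rho(s)$ is continuous and bounded. First I would write $1-\rho^2(s)=(1-\rho(s))(1+\rho(s))$ and observe that $1+\rho(s)\to 2$ as $s\to a_2$, so there is no harm in the factor $1+\rho(s)$; the whole question reduces to the integrability of $1/\sqrt{1-\rho(s)}$ near $a_2$.

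Next I would exploit the hypothesis $\rho(a_2)=1$ together with $\rho'(a_2)>0$. By continuity of $\rho'$ there is a $\delta>0$ and a constant $c>0$ with $\rho'(s)\ge c$ on $[a_2-\delta,a_2]$. The mean value theorem then gives $1-\rho(s)=\rho(a_2)-\rho(s)\ge c\,(a_2-s)$ for $s\in[a_2-\delta,a_2]$, hence
\begin{equation}\label{eq-bound}
\frac{\rho(s)}{\sqrt{1-\rho^2(s)}}\le \frac{1}{\sqrt{(1-\rho(s))(1+\rho(s))}}\le \frac{1}{\sqrt{c\,(a_2-s)}}
\end{equation}
on that interval, where I also used $0<\rho(s)<1$ so that $1+\rho(s)\ge 1$ and $\rho(s)\le 1$. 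Since $\int_{a_2-\delta}^{a_2}(a_2-s)^{-1/2}\,ds=2\sqrt{\delta}<\infty$, the comparison test yields convergence of the improper integral \eqref{eq-integral}.

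For case (ii) I would simply note that the substitution $u=a_1+a_2-s$ carries $\int_{a_1}^{a_1+\delta}\rho(s)(1-\rho^2(s))^{-1/2}\,ds$ into an integral of the same form for the function $\tilde\rho(u):=\rho(a_1+a_2-u)$, which satisfies $\tilde\rho(a_2')=1$ and $\tilde\rho'(a_2')=-\rho'(a_1)>0$ at the corresponding endpoint, so case (i) applies. I do not anticipate a genuine obstacle here: the only mild point is being careful that $\delta$ must be chosen small enough that $\rho'$ stays positive (resp. negative) on the closed subinterval, which is where continuity of $\rho'$ — part of the hypothesis that $\rho$ is differentiable with the stated one-sided derivative behavior — is used. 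The estimate \eqref{eq-bound} is the crux, and it is elementary.
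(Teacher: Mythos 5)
Your proof is correct, and it rests on the same key input as the paper's: continuity of $\rho'$ gives a uniform bound $\rho'\ge c>0$ on a one-sided neighborhood of $a_2$, and this is what tames the vanishing of $\sqrt{1-\rho^2}$. Where you differ is in how that bound is converted into convergence. You apply the mean value theorem to get the linear lower bound $1-\rho(s)\ge c\,(a_2-s)$, factor $1-\rho^2=(1-\rho)(1+\rho)$, and compare the integrand with $(c\,(a_2-s))^{-1/2}$, whose integral is elementary. The paper instead inserts the factor $\rho'(s)/\rho'(s)$, bounds $1/\rho'(s)\le 1/C$, and performs the substitution $u=\rho(s)$, reducing everything to the convergent integral $\int^{1} du/\sqrt{1-u^2}=\arcsin u$, which yields the explicit bound $\pi/(2C)$. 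Both routes are short and elementary; yours avoids any change of variables at the cost of invoking the MVT and the factorization of $1-\rho^2$, while the paper's substitution exploits that $\rho$ is exactly the quantity appearing inside the square root, which is slightly slicker and gives a cleaner closed-form bound. Your handling of case (ii) by the reflection $s\mapsto a_1+a_2-s$ is also fine (the paper simply declares it analogous), and your explicit remark about where continuity of $\rho'$ enters is a point the paper leaves implicit.
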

\begin{proof}
Assume that (i) occurs.
In this case, there exist positive constants, $\delta$ and $C,$ such that
$\rho'(s)\ge C>0\,\forall s\in(a_2-\delta ,a_2).$ Therefore,
since $0<\rho{|_{(a_1,a_2)}}<1,$
\begin{eqnarray}
\int_{a_2-\delta}^{a_2}\frac{\rho(s)ds}{\sqrt{1-\rho^2(s)}} & \le & \int_{a_2-\delta}^{a_2}\frac{\rho'(s)ds}{\rho'(s)\sqrt{1-\rho^2(s)}}
\le\frac{1}{C} \int_{\rho(a_2-\delta)}^{1}\frac{d\rho}{\sqrt{1-\rho^2}} \nonumber\\
            & = & \frac{1}{C}\left(\frac{\pi}{2}-\arcsin(\rho(a_2-\delta))\right)\le \frac{\pi}{2C}\,, \nonumber
\end{eqnarray}
which proves the convergence of the integral \eqref{eq-integral}. The case (ii) is analogous.
\end{proof}

\section{Rotational $H_r(>0)$-hypersurfaces of $\hfr$ and $\s^n\times\R.$} \label{sec-rotationalHrhyp}

Rotational hypersurfaces in  simply connected space forms $\q^n_\epsilon$ or products
$\q_\epsilon^n\times\R$ are among the most classical  hypersurfaces  of these spaces.
In the case of $\q^n_\epsilon\times\R,$  they are obtained
by rotating (with the aid of the group of isometries of $\q_\epsilon^n$)
a  plane curve about an axis $\{o\}\times\R,$  $o\in\q_\epsilon^n$.
Consequently, any connected component of any horizontal section $\Sigma_t$  of a rotational hypersurface
$\Sigma$ in $\q_\epsilon^n\times\R$ lies in a  geodesic sphere
of $\q_\epsilon^n\times\{t\}$ with center at the axis.
%Conversely, any hypersurface of \,$\q_\epsilon^n\times\R$ with this property is easily seen to be rotational.
This fact suggests the following definition.

\begin{definition}
A hypersurface  $\Sigma\subset M\times\R$ is called \emph{rotational}, if there exists
a fixed point $o\in M$ such that any connected component of
any horizontal section $\Sigma_t$ is contained in a geodesic  sphere  of $M\times\{t\}$
with center at $o\times\{t\}.$
If so, the set $\{o\}\times\R$ is called the \emph{axis} of $\Sigma.$
In particular, any horizontal hyperplane
$P_t:=M\times\{t\}$ is a rotational hypersurface of
$M\times\R$ with axis at any point $o\in M.$
\end{definition}

In what follows, we construct and classify
complete rotational $H_r(>0)$-hyper\-surfaces in $\hfr$ and
$\s^n\times\R.$

\subsection{Rotational $H_r(>0)$-hypersurfaces of $\hfr$}
Let us consider a family
\begin{equation} \label{eq-Fgeodesicspheres}
\mathscr F:=\{f_s:\s^{n-1}\rightarrow\hf\,; \,\,\, s\in (0,+\infty)\}
\end{equation}
of isoparametric concentric geodesic spheres
of $\hf$ indexed by their radiuses, that is,
for a fixed  $o\in\hf$, and  each $s\in (0,+\infty),$
$f_s(\s^{n-1})$ is the geodesic sphere $S_s(o)$ of $\hf$ with center at $o$ and radius $s.$

We remark that any sphere $f_s\in\mathscr F$ is strictly convex.
Also, in accordance to the notation of Section \ref{sec-Hrgraphs}, for each $s\in (0,+\infty),$
we  choose the outward orientation of $f_s$\,,
so that \emph{any principal curvature $k_i^s$ of \,$f_s$ is negative}.
In this setting, the function $s\in (0,+\infty) \mapsto H_r^s$ is positive for
$r$ even and negative for $r$ odd. Hence,
for any constant $H_r>0,$ the coefficients $a$ and $b$  in \eqref{eq-a&b} are given by
\begin{equation} \label{eq-a&bagain}
a(s)=-\frac{r|H_r^s|}{|H_{r-1}^s|} \quad\text{and}\quad b(s)=\frac{r H_r}{|H_{r-1}^s|} \,\cdot
\end{equation}

The principal curvatures $k_i^s$ of the geodesic spheres $f_s\in\mathscr F$ are ($n=\dim\hf$):
\begin{equation} \label{eq-princcurvhf}
\begin{aligned}
    k_1^s &= -\frac{1}{2}\coth(s/2) \,\,\,\text{with multiplicity}\,\,\,  n-p-1 \\[1.5ex]
    k_2^s &=-\coth(s) \,\,\, \text{with multiplicity} \,\,\,  p
  \end{aligned}\,\,,
\end{equation}
where $p=n-1$ for $\h^n$,  $p=1$ for $\h_\C^m$, $p=3$ for $\h_{\mathbb K}^m$, and
$p=7$ for $\h_{\mathbb O}^2$  (see, e.g., \cite[pgs. 353, 543]{cecil-ryan} and \cite{kimetal}).

From equalities \eqref{eq-princcurvhf}, we obtain the $r$-mean curvatures
$H_r^s$ of the geodesic spheres $f_s$ of $\hf.$
For instance, in $\h^n,$ $n\ge 2,$ we have
\begin{equation}  \label{eq-hrhn}
H_r^s=(-1)^r{{n-1}\choose{r}}\coth^r(s) \quad (1\le r\le n-1),
\end{equation}
whereas for $\h_\C^m,\, m>1,$ one has
\begin{eqnarray}  \label{eq-hrhc}
H_r^s &=& \left(-\frac 12\right)^r{{n-2}\choose{r}}\coth^r(s/2)\\
       &+& (-1)^r\left(\frac 12\right)^{r-1}{{n-2}\choose{r-1}}\coth^{r-1}(s/2)\coth(s)\nonumber
\end{eqnarray}
if $1\le r\le n-2$,  and
\begin{equation}  \label{eq-hrhc2}
H_{n-1}^s=(-1)^{n-1}\coth(s)\coth^{n-2}(s/2)/2^{n-2}.
\end{equation}

Analogously, one obtains the $r$-th mean curvature functions
$H_r^s$ for the other hyperbolic spaces.
A direct computation from this data yields the following

\begin{lemma} \label{lem-decreasing}
The functions $a$ e $b$ defined in \eqref{eq-a&bagain} have the following properties:
\begin{itemize}[parsep=1ex]
\item [\rm i)] $a$ is negative and increasing for $1\le r\le n-1$, and vanishes for $r=n.$
\item [\rm ii)] $b$ is positive and increasing for $1<r\le n$, and $b=H_1$ for $r=1.$
\end{itemize}
In particular, we have the inequalities
\begin{equation}\label{eq-a'+b'}
a'(s)\ge 0, \,\,\,\,  b'(s)\ge 0, \,\, \,\,\, \text{and}\ \,\,\,\, a'(s)+b'(s)>0 \,\, \, \forall s\in(0,+\infty).
\end{equation}
\end{lemma}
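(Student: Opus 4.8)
The plan is to reduce everything to the behavior of two auxiliary ratios built from hyperbolic cotangents. Recall that $a(s) = -r|H_r^s|/|H_{r-1}^s|$ and $b(s) = rH_r/|H_{r-1}^s|$, with $H_r > 0$ a fixed constant. Since $H_r$ is a positive constant, $b$ is positive wherever it is defined, and $b'$ has the same sign as $-(|H_{r-1}^s|)'$; similarly $a < 0$ since $|H_r^s|, |H_{r-1}^s| > 0$. So statement (i) and (ii) will follow once I control the \emph{monotonicity} of $s \mapsto |H_r^s|/|H_{r-1}^s|$ (must be decreasing in absolute value, equivalently $a$ increasing) and of $s \mapsto |H_{r-1}^s|$ (must be decreasing, so that $b$ increases). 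The first step is to set up uniform notation: using \eqref{eq-princcurvhf}, write $c(s) := \coth(s/2)$ and $d(s) := \coth(s)$, note $\tfrac12 < \tfrac12 c(s)$ and $d(s)$ are the magnitudes of the two principal curvatures (with $\tfrac12 c > d$ on $(0,\infty)$ since $\coth$ is decreasing and $\coth(s/2) > \coth(s)$, and also both $c$ and $d$ are strictly decreasing with limit $1$ as $s\to\infty$), and express $|H_r^s|$ as the $r$-th elementary symmetric function of these $n$ positive numbers — with multiplicities $n-p-1$ for $\tfrac12 c$ and $p$ for $d$.

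The key step is a monotonicity observation for elementary symmetric functions. Writing $\sigma_r(s)$ for the $r$-th elementary symmetric polynomial of the $n$-tuple $(\tfrac12 c(s),\dots,\tfrac12 c(s), d(s),\dots,d(s))$, I claim each $\sigma_r$ is positive, strictly decreasing in $s$, and that the ratio $\sigma_r/\sigma_{r-1}$ is also strictly decreasing. Positivity is clear. For monotonicity of $\sigma_r$: each variable is a strictly decreasing positive function of $s$, and $\sigma_r$ is a polynomial with nonnegative coefficients that is strictly increasing in each variable on the positive orthant, hence $\sigma_r$ is strictly decreasing in $s$ — this immediately gives $|H_{r-1}^s|$ decreasing, hence $b$ increasing, for $r \ge 2$; for $r=1$ one checks directly $b = H_1$. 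For the ratio $\sigma_r/\sigma_{r-1}$: one uses the standard identity $\partial \sigma_r/\partial x_i = \sigma_{r-1}(\hat x_i)$ (the $(r-1)$-th symmetric function omitting $x_i$) together with the Newton-type inequality $\sigma_{r-1}(\hat x_i)\,\sigma_{r-1} \ge \sigma_r\,\sigma_{r-2}(\hat x_i)$, or more directly the fact that along the curve $s \mapsto (\tfrac12 c(s),\dots,d(s))$ all coordinates decrease, so it suffices to show $\sigma_r/\sigma_{r-1}$ is nondecreasing in each $x_i$ separately on the positive orthant — a one-line computation via the quotient rule reducing to $\sigma_{r-1}(\hat x_i)\sigma_{r-1} - \sigma_r\sigma_{r-2}(\hat x_i) \ge 0$, which is a Newton inequality for symmetric functions in the remaining variables. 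Hence $\sigma_r/\sigma_{r-1}$ decreases along the curve, i.e. $|H_r^s|/|H_{r-1}^s|$ decreases, i.e. $a(s) = -r|H_r^s|/|H_{r-1}^s|$ increases; and $a$ is negative on $(0,\infty)$. For $r=n$ there is no $H_n^s$ term ($f_s$ has only $n$ principal curvatures, so $H_n^s \ne 0$ in general — but here we only need that $a$ is defined via $H_n^s/H_{n-1}^s$; since $\dim \hf = n$, $\sigma_n \ne 0$, yet in the graph construction for $r = n$ equation \eqref{eq-Hr} has no $\rho^n H_n^s$ obstruction — re-reading \eqref{eq-a&b}, for $r=n$ we get $a = nH_n^s/H_{n-1}^s$ which is still negative; but the claim states $a$ vanishes for $r=n$, so I must instead note that in the $r=n$ case the relevant isoparametric family has $H_{n}^s$ entering trivially) — this edge case I will handle by direct inspection of the formulas, since for $r=n$ the coefficient structure degenerates and $a \equiv 0$ is read off \eqref{eq-a&bagain} once one observes the numerator reduces appropriately.

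Finally, the inequalities \eqref{eq-a'+b'} follow: $a' \ge 0$ from (i), $b' \ge 0$ from (ii), and strict positivity of $a' + b'$ because for $2 \le r \le n$ we already have $b' > 0$ strictly (as $\sigma_{r-1}$ is \emph{strictly} decreasing), while for $r=1$ we have $b' = 0$ but $a' > 0$ strictly (as $\sigma_1/\sigma_0 = \sigma_1 = (n-p-1)\tfrac12 c + p\,d$ is strictly decreasing). The main obstacle I anticipate is verifying the Newton-type inequality $\sigma_{r-1}(\hat x_i)\,\sigma_{r-1} \ge \sigma_r\,\sigma_{r-2}(\hat x_i)$ cleanly enough to conclude monotonicity of the ratio — this is where one must be careful, since the two principal curvatures come with multiplicities and one wants an argument that does not require case analysis on $r$ versus $n-p-1$ versus $p$. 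A robust way around this is to differentiate $\sigma_r/\sigma_{r-1}$ directly in $s$ using $\tfrac{d}{ds}(\tfrac12 c) < 0$, $\tfrac{d}{ds} d < 0$, and the derivative identities for $\sigma_r$, reducing the whole thing to the positivity of $\sigma_{r-1}^2 - \sigma_r\sigma_{r-2}$-type expressions summed against negative weights, which is classical (log-concavity of the sequence $\sigma_k$ for nonnegative reals, i.e. the Newton inequalities). Once that log-concavity is invoked as a known fact, the lemma is a short bookkeeping exercise.
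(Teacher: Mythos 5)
Your argument is essentially correct, and it takes a genuinely different route from the paper's. The paper offers no written proof: it simply asserts that the lemma follows by ``a direct computation'' from the explicit formulas \eqref{eq-hrhn}--\eqref{eq-hrhc2}, i.e.\ one plugs in the $\coth$ expressions for each $\mathbb F$ separately and differentiates. You instead observe that $|H_k^s|$ is the $k$-th elementary symmetric function $\sigma_k$ of the $(n-1)$-tuple of positive, strictly decreasing functions $\tfrac12\coth(s/2)$ (multiplicity $n-p-1$) and $\coth(s)$ (multiplicity $p$), and reduce everything to two structural facts: $\sigma_{r-1}$ is strictly decreasing along the curve (giving $b'>0$ for $r\ge 2$), and $\sigma_r/\sigma_{r-1}$ is nonincreasing because its partial derivative in $x_i$ equals $\bigl(\sigma_{r-1}(\hat x_i)^2-\sigma_r(\hat x_i)\sigma_{r-2}(\hat x_i)\bigr)/\sigma_{r-1}^2\ge 0$ by Newton's inequalities (giving $a'\ge 0$). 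This is cleaner than the paper's implicit computation, avoids any case analysis over $\mathbb F$, and would extend verbatim to any isoparametric family with positive, decreasing principal curvature magnitudes; the price is invoking log-concavity of the $\sigma_k$, which the paper's brute-force route does not need.

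Two slips, neither fatal. First, your parenthetical claim $\tfrac12\coth(s/2)>\coth(s)$ is false (in fact $\tfrac12\coth(s/2)-\coth(s)=-\tfrac12\tanh(s/2)<0$), but you never use it, so nothing breaks. Second, your treatment of the case $r=n$ is muddled: you write that $f_s$ has $n$ principal curvatures and that $\sigma_n\ne 0$, which is wrong. The geodesic sphere $f_s(\s^{n-1})$ is an $(n-1)$-dimensional hypersurface of the $n$-dimensional manifold $\hf$, so it has exactly $n-1$ principal curvatures and, by the paper's own definition of $H_r$ (which sets $H_r=0$ for $r$ exceeding the dimension of the hypersurface), $H_n^s\equiv 0$. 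Hence $a\equiv 0$ for $r=n$ immediately from \eqref{eq-a&bagain}; no ``degeneration of the coefficient structure'' needs to be inspected. With that one-line correction your proof is complete.
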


We point out that, in the above setting,
one has (cf. \cite{mahmoudi})
\begin{equation} \label{eq-approximationHrs}
|H_r^s|={{{n-1}\choose{r}}}s^{-r}+\mathcal O(s^{2-r})
\end{equation}
in a neighborhood of $s=0.$ In particular,
\begin{equation} \label{eq-hrinfity}
\lim_{s\rightarrow 0}|H_r^s|=+\infty.
\end{equation}

In what follows, by means of the family $\mathscr F,$ we
will construct  complete  rotational $H_r$-hyper\-sur\-faces in $\hf\times\R$
which  are made of pieces of $(f_s,\phi)$-graphs. With this purpose, we will look for
solutions $\tau(s)$ of the equation $y'=ay+b$ (with $a$ and $b$ as in \eqref{eq-a&bagain})
satisfying  suitable initial conditions.
%As we shall see,
%the geometry  of the  resulting $H_r$-hypersurface depends on the behavior of the solution $\tau(s)$
%as $s\rightarrow\mathcal R.$
Let us recall that, in this context, the general solution of
$y'=ay+b$ is
\begin{equation} \label{eq-generalsolution}
\begin{aligned}
    \tau(s)&=\frac{1}{\mu(s)}\left(\tau_0+\int_{s_0}^{s}{b(u)}{\mu(u)}du\right) \\[1.5ex]
    \mu(s)&=\exp\left(-\int_{s_0}^sa(u)du\right)
  \end{aligned}\,\,,
  \quad s_0\,, s\in (0,+\infty), \,\,\tau_0\in\R.
\end{equation}

Concerning the solutions $\tau(s)$, we will be also interested in those
which can be extended to $s=0.$ Notice that,
in principle, neither $a$ nor $b$ are defined at $s=0$, which makes this point
a singularity. However,
the function $b$ is easily extendable to $s=0.$ Indeed,  we can set $b(0)=H_r$ if $r=1$,
and $b(0)=0$ if $r>1$ (from \eqref{eq-hrinfity}).
As for $a,$ it follows from \eqref{eq-approximationHrs} that, for $1\le r<n,$
\begin{equation}  \label{eq-|a|}
\lim_{s\rightarrow 0}|a(s)|=+\infty \quad\text{and}\quad \lim_{s\rightarrow 0}s|a(s)|<+\infty,
\end{equation}
which characterizes $s=0$ as a regular singular point of
$y'=ay+b.$ This means that, despite the fact that $a$ is not defined at $s=0,$
this equation has a nonnegative solution $\tau$ defined at  $s=0$
that satisfies $\tau(0)=0$ (cf. \cite[Theorem 3.1]{reignier}, \cite[Lemma 4.4]{teschl}).
More precisely, this solution is
\begin{equation}\label{eq-tau}
\tau(s):=\left\{
\begin{array}{lcl}
  \frac{1}{\mu(s)}\int_{0}^{s}{b(u)}{\mu(u)}du & \text{if} & s\in (0,+\infty) \\[1.5ex]
  0 & \text{if} & s=0
\end{array}
\right.\,,
\end{equation}
where $\mu$ is a solution of $y'+ay=0.$
Notice that the function $\tau$ defined in \eqref{eq-tau} is also the solution of
$y'=ay+b$ in the case $r=n,$ i.e., for $a=0.$ (Just set $\mu(s)=1.$)

As we shall see, the geometry of the $H_r$-hypersurfaces we construct from $(f_s,\phi)$-graphs is closely
related to the growth of $\tau$ as $s\rightarrow+\infty.$ Taking that into account,
for a given family of parallel geodesic spheres $\mathscr F$ in $\hf,$ we define
\begin{equation}\label{eq-CFr}
  C_{\mathbb F}(r):=\lim_{s\rightarrow+\infty}|H_r^s|, \,\,\, r=1,\dots ,n.
\end{equation}
In particular, $C_{\mathbb F}(n)=0.$
Notice that, since $\hf$ is homogeneous, the constant $C_{\mathbb F}(r)$ is well defined, that is, it does not
depend on the family $\mathscr F$ of geodesic spheres.

It follows from equalities \eqref{eq-hrhn}--\eqref{eq-hrhc2} that
\begin{itemize}[parsep=1ex]
  \item [i)] $C_\R(r)={{n-1}\choose{r}} \quad (1\le r\le n-1).$
  \item [ii)] $C_\C(r)=\left(\frac 12\right)^r{{n-2}\choose{r}}+\left(\frac 12\right)^{r-1}{{n-2}\choose{r-1}} \quad (1\le r\le n-2).$
  \item [iii)] $ C_\C(n-1)=\frac{1}{2^{n-2}}\,\cdot$
\end{itemize}

Similarly, one can compute the other constants $C_{\mathbb F}(r)$  and easily conclude that
\[
C_\mathbb F(r)>0 \,\,\, \forall r\in\{1,\dots ,n-1\}.
\]

The next proposition shows the relation between the solution $\tau$ of
$y'=ay+b$ and the constant $C_{\mathbb F}(r).$ Notice that, for $1\le r\le n-1,$  the identities
\eqref{eq-a&bagain} yield:
\[
\lim_{s\rightarrow+\infty}\frac{-b(s)}{\phantom{-}a(s)}=\frac{H_r}{C_{\mathbb F}(r)}\,\cdot
\]

\begin{proposition} \label{pro-limttauR}
The following assertions hold:
\begin{itemize}[parsep=1ex]
  \item [\rm i)] The solution $\tau$ defined in \eqref{eq-tau} is increasing, i.e., $\tau'>0$ in $(0,+\infty).$
  \item [\rm ii)] Both the solutions $\tau$ in \eqref{eq-generalsolution} and \eqref{eq-tau} satisfy the following equality:
\begin{equation} \label{eq-limF}
\lim_{s\rightarrow+\infty}\tau(s)=
\left\{
\begin{array}{lcl}
+\infty & \text{if} &  r=n.\\[1.5ex]
H_r/C_{\mathbb F}(r) & \text{if} &  1\le r\le n-1.
\end{array}
\right.
\end{equation}
\end{itemize}
\end{proposition}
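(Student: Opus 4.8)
The plan is to treat the two items separately but with the same underlying mechanism: the sign of $\tau' = a\tau + b$ and the asymptotics of $a,b$ as $s\to+\infty$, using Lemma \ref{lem-decreasing} and the explicit formulas \eqref{eq-a&bagain}, \eqref{eq-hrhn}--\eqref{eq-hrhc2}.

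For item (i), I would first observe that $\tau(0)=0$ and, from \eqref{eq-tau} together with $b>0$ on $(0,+\infty)$, that $\tau(s)>0$ for $s>0$. Then, since $\tau$ solves $y'=ay+b$, we have $\tau'(s) = a(s)\tau(s) + b(s)$. The difficulty is that $a<0$, so the sign of $\tau'$ is not immediate. Here I would argue by contradiction at a hypothetical first zero: suppose $s_1>0$ is the smallest value with $\tau'(s_1)=0$; then $\tau'>0$ on $(0,s_1)$, so $\tau$ is increasing there, and differentiating the relation $\tau' = a\tau + b$ gives $\tau''(s_1) = a'(s_1)\tau(s_1) + a(s_1)\tau'(s_1) + b'(s_1) = a'(s_1)\tau(s_1) + b'(s_1)$. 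By Lemma \ref{lem-decreasing}, $a'\ge 0$, $b'\ge 0$ and $a'+b'>0$; combined with $\tau(s_1)>0$ this forces $\tau''(s_1)>0$, contradicting the assumption that $s_1$ is a first (hence local-minimum-type) zero of $\tau'$ approached from the region $\tau'>0$. Care is needed at the singular endpoint $s=0$: I would verify using \eqref{eq-|a|} and $b(0^+)$ finite that $\tau'(s)\to b(0)\ge 0$ (and in fact $\tau'>0$ for small $s$) by examining the regular-singular structure, so the "first zero" argument has a genuine interval $(0,s_1)$ to work with. The same argument applies verbatim to the general solution \eqref{eq-generalsolution} whenever it is positive, though for item (i) only the solution \eqref{eq-tau} is claimed.

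For item (ii), the case $r=n$ is immediate: then $a\equiv 0$, $\mu\equiv 1$, and $\tau(s)=\int_0^s b(u)\,du$ with $b>0$; moreover $b(s) = rH_r/|H_{n-1}^s| \to rH_n/C_{\mathbb F}(n-1) > 0$ as $s\to\infty$ since $C_{\mathbb F}(n-1)$ is a positive finite constant, so the integral diverges and $\tau(s)\to+\infty$. For $1\le r\le n-1$, the key point is that $a(s)\to -rC_{\mathbb F}(r)/C_{\mathbb F}(r-1)$... more precisely, from \eqref{eq-a&bagain}, $-b(s)/a(s) = H_r/|H_r^s| \to H_r/C_{\mathbb F}(r)$ as noted just before the proposition, and $a(s)$ tends to a finite negative limit $a_\infty := -rC_{\mathbb F}(r)/C_{\mathbb F}(r-1) < 0$ (using that $|H_r^s|$ and $|H_{r-1}^s|$ have positive finite limits). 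Since $a$ is increasing and bounded, $\mu(s) = \exp(-\int_{s_0}^s a) \sim \mathrm{const}\cdot e^{-a_\infty s}$ grows exponentially. I would then show $\tau(s)\to H_r/C_{\mathbb F}(r)$ by writing $\tau(s) - H_r/C_{\mathbb F}(r) = \tfrac{1}{\mu(s)}\big(\tau_0 - \tfrac{H_r}{C_{\mathbb F}(r)}\mu(s_0) + \int_{s_0}^s [b(u) + \tfrac{H_r}{C_{\mathbb F}(r)}a(u)]\mu(u)\,du\big)$ and noting that the bracketed integrand is $\mu(u)\cdot(b(u) - (-b(u)/a(u))|_{\infty}\cdot(-a(u)))$, which is $o(\mu(u))$ since $b(u)/(-a(u)) \to H_r/C_{\mathbb F}(r)$; dividing by $\mu(s)\to\infty$ and applying L'Hôpital (or a direct $\varepsilon$-estimate splitting the integral at a large $s_1$) kills every term. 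This works identically for both solution formulas since they differ only by the choice of $\tau_0$, which is absorbed into the $1/\mu(s)\to 0$ factor.

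The main obstacle is the regular-singular behavior at $s=0$: one must be careful that $\tau$ as defined in \eqref{eq-tau} really is $C^1$ (or at least that $\tau'>0$) near $0$ despite $a$ blowing up, and that the cited existence results (\cite{reignier, teschl}) deliver a solution to which the monotonicity argument applies. Once that is in place, both (i) and (ii) reduce to elementary ODE estimates driven entirely by the sign and monotonicity data in Lemma \ref{lem-decreasing} and the finiteness of the limits $C_{\mathbb F}(r)$.
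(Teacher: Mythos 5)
Your proposal is correct and follows essentially the same route as the paper: item (i) is proved by the identical second-derivative contradiction at a first critical point of $\tau$, using $a'\ge 0$, $b'\ge 0$, $a'+b'>0$ from Lemma \ref{lem-decreasing}, and item (ii) by the same exponential lower bound on $\mu$ (the paper gets it from $\inf|a|>0$, you from the finite negative limit of $a$) followed by the l'H\^opital computation giving $\lim b/(-a)=H_r/C_{\mathbb F}(r)$. Your extra care at the regular singular point $s=0$ and the explicit estimate of $\tau-H_r/C_{\mathbb F}(r)$ only tighten steps the paper treats more briefly.
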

\begin{proof}
To prove (i), let us observe first  that, since the solution  $\tau$
in \eqref{eq-tau} is positive in $(0,+\infty)$ and $\tau(0)=0,$ we have
that $\tau$ is increasing near $0.$ Assume that  $\tau$ is not
increasing in $(0,+\infty).$ In this case, $\tau$ has a first critical point $s_0$ in
$(0,+\infty)$ which is necessarily a local maximum.
However, considering \eqref{eq-a'+b'} and the equality $\tau'=a\tau+b,$ we have
\[
\tau''(s_0)=a'(s_0)\tau(s_0)+a(s_0)\tau'(s_0)+b'(s_0)=a'(s_0)\tau(s_0)+b'(s_0)>0,
\]
which implies that $s_0$ is  a local minimum --- a contradiction.
Therefore, $\tau$ is increasing in $(0,+\infty),$  which proves (i).

Suppose that $\tau$ is as in \eqref{eq-generalsolution}. If $r=n,$
since $b$ is increasing, one has
\[
\tau(s)=\tau_0+\int_{s_0}^sb(u)du\ge\tau_0+b(s_0)(s-s_0),
\]
which implies that $\tau(s)\rightarrow+\infty$ as $s\rightarrow+\infty.$

Now, assume $1\le r\le n-1.$ We claim
that, in this case, $\mu(s)\rightarrow+\infty$ as $s\rightarrow+\infty$.
Indeed, for any fixed $s_0>0,$ and $s>s_0$\,,
\[
|a(s)|=\frac{r|H_r^s|}{|H_{r-1}^s|}>\frac{rC_{\mathbb F}(r)}{|H_{r-1}^{s_0}|}>0.
\]
Since $|a|$ is decreasing, this  inequality gives
that $\inf |a|>0$ in $(0,+\infty).$ Hence,
\[
\mu(s)=e^{-\int_{s_0}^{s}a(u)du}=e^{\int_{s_0}^{s}|a(u)|du}>e^{(\inf|a|) (s-s_0)},
\]
which clearly implies the claim on $\mu.$

From the expression of $\tau,$ we have (apply  l'Hôpital to the second summand):
\[
\lim_{s\rightarrow +\infty}\tau(s)=\lim_{s\rightarrow+\infty}\frac{\tau_0}{\mu(s)}+\lim_{s\rightarrow+\infty}\frac{b(s)\mu(s)}{\mu'(s)}=
\lim_{s\rightarrow+\infty}\frac{b(s)\mu(s)}{-a(s)\mu(s)}=\frac{H_r}{C_{\mathbb F}(r)}\,,
\]
which finishes the proof of \eqref{eq-limF} when $\tau$ is defined as  in \eqref{eq-generalsolution}.
The proof  for the solution $\tau$ in \eqref{eq-tau} is completely analogous.
\end{proof}

The above proposition immediately gives the following result.

\begin{corollary} \label{cor-s0}
Let $\tau$ be as in \eqref{eq-tau}. Then,
there exists $s_0\in(0, +\infty)$  satisfying
\[
0<\tau|_{(0,s_0)}<1 \,\,\,\, \text{and} \,\,\,\,\, \tau(s_0)=1
\]
if and only if $H_r>C_{\mathbb F}(r)$ (Fig. \ref{fig-graph1}).
Consequently,  for $1\le r\le n-1$ and any constant $H_r\in(0,C_{\mathbb F}(r)),$ the following inequality
holds (Fig. \ref{fig-graph2}):
\[
0<\tau(s)<1 \,\,\, \forall s\in (0,+\infty).
\]
\end{corollary}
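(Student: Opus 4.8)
The plan is to derive Corollary \ref{cor-s0} directly from Proposition \ref{pro-limttauR} together with the initial condition $\tau(0)=0$ and the monotonicity established in part (i) of that proposition. Since $\tau$ is continuous on $[0,+\infty)$, strictly increasing there, and $\tau(0)=0$, the existence of a point $s_0$ with $0<\tau|_{(0,s_0)}<1$ and $\tau(s_0)=1$ is equivalent to the assertion that $\tau$ actually attains the value $1$ somewhere in $(0,+\infty)$; and because $\tau$ is increasing, this in turn is equivalent to $\lim_{s\to+\infty}\tau(s)>1$. So the whole statement reduces to a comparison between the limit computed in \eqref{eq-limF} and the number $1$.

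\smallskip

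First I would treat the case $1\le r\le n-1$. By part (ii) of Proposition \ref{pro-limttauR}, $\lim_{s\to+\infty}\tau(s)=H_r/C_{\mathbb F}(r)$, which is a finite positive number. If $H_r>C_{\mathbb F}(r)$, then this limit exceeds $1$, so by continuity and the intermediate value theorem there is some $s_1$ with $\tau(s_1)>1$; since $\tau(0)=0<1$ and $\tau$ is strictly increasing, there is a unique $s_0\in(0,s_1)$ with $\tau(s_0)=1$, and $\tau<1$ on $(0,s_0)$ by monotonicity — giving exactly the desired $s_0$. Conversely, if such an $s_0$ exists, then $\tau(s_0)=1$ and, $\tau$ being increasing, $\lim_{s\to+\infty}\tau(s)\ge\tau(s')>1$ for any $s'>s_0$, forcing $H_r/C_{\mathbb F}(r)>1$, i.e. $H_r>C_{\mathbb F}(r)$. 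This proves the equivalence. The ``consequently'' clause is then immediate: if $1\le r\le n-1$ and $0<H_r<C_{\mathbb F}(r)$, the equivalence (in its contrapositive form) rules out $\tau$ ever reaching $1$; combined with $\tau(0)=0$, $\tau$ increasing, and $\tau>0$ on $(0,+\infty)$, one gets $0<\tau(s)<1$ for all $s\in(0,+\infty)$.

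\smallskip

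For the remaining case $r=n$, note that $C_{\mathbb F}(n)=0$, so the hypothesis $H_r>C_{\mathbb F}(r)$ reads $H_n>0$, which holds by our standing assumption $H_r>0$; and by part (ii), $\lim_{s\to+\infty}\tau(s)=+\infty>1$, so the same intermediate-value-plus-monotonicity argument produces the point $s_0$. Thus the equivalence is trivially valid here too (both sides are automatically true), and there is nothing further to check for the second clause since it is stated only for $1\le r\le n-1$.

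\smallskip

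I do not expect any serious obstacle: all the analytic content — monotonicity of $\tau$ and the exact value of its limit at infinity — is already packaged in Proposition \ref{pro-limttauR}, and what remains is the elementary observation that a strictly increasing continuous function starting at $0$ crosses the level $1$ exactly when its supremum exceeds $1$. The only point requiring a word of care is the uniqueness and the position of $s_0$ (that $\tau<1$ strictly on $(0,s_0)$), which again follows purely from strict monotonicity. One should also make sure to invoke that $\tau$ is defined and continuous up to and including $s=0$ with $\tau(0)=0$, as recorded in \eqref{eq-tau}, so that the interval $(0,s_0)$ and the boundary behavior are legitimate.
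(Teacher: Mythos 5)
Your proposal is correct and is exactly the argument the paper has in mind: the paper derives the corollary as an immediate consequence of Proposition \ref{pro-limttauR}, using the strict monotonicity of $\tau$ from part (i), the limit $H_r/C_{\mathbb F}(r)$ (or $+\infty$ when $r=n$) from part (ii), and the intermediate value theorem, just as you do. Your handling of the edge cases (the limit equal to $1$, and $r=n$ where $C_{\mathbb F}(n)=0$) is a faithful, slightly more explicit elaboration of the same reasoning.
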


\begin{figure} %\label{fig-graphs}
  \begin{subfigure}[hbt]{.5\textwidth}
    \includegraphics{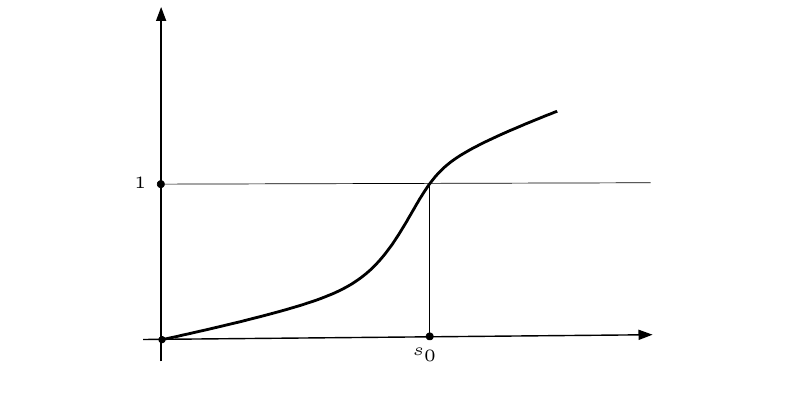}
    \caption{\small \,$H_r>C_{\mathbb F}(r)$}
    \label{fig-graph1}
  \end{subfigure}
  \hspace{1.5cm}%
  \begin{subfigure}[h]{.5\textwidth}
     \includegraphics{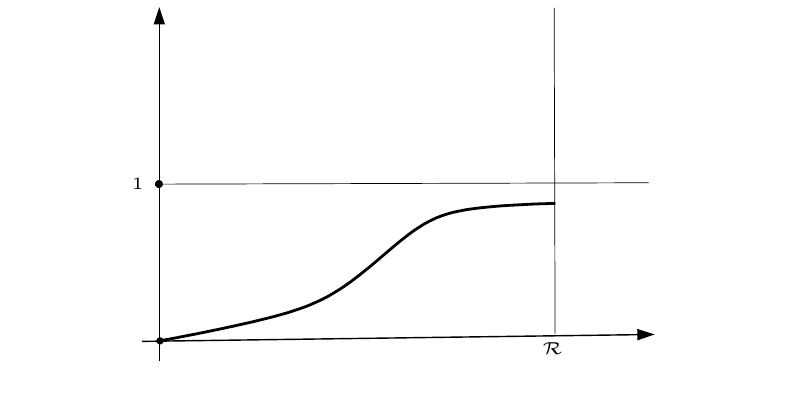}
    \caption{\small \,$0<H_r\le C_{\mathbb F}(r)$}
    \label{fig-graph2}
  \end{subfigure}
  \caption{\small Graphs of $\tau$ (as in \eqref{eq-tau}) according to the sign  of $H_r-C_{\mathbb F}(r).$}
\end{figure}

Now, we are in position to establish our first existence result.

\begin{theorem} \label{th-Hrrotational}
Given $r\in\{1,\dots, n\}$ and a constant $H_r>0,$ the following hold:
\begin{itemize}[parsep=1ex]
\item[\rm i)] If \,$H_r>C_{\mathbb F}(r),$  there exists an embedded strictly convex
rotational $H_r$-sphere in $\hfr$ which is symmetric with respect to a horizontal hyperplane.

\item[\rm ii)] If \,$0<H_r\le C_{\mathbb F}(r),$  there
exists an entire strictly convex rotational $H_r$-graph  in $\hf\times[0,+\infty)$
which is tangent to $\hf\times\{0\}$ at a single point, and whose height function is unbounded above.
Consequently, there are no compact  $H_r$-hypersurfaces of
\,$\hfr$ for such values of $H_r$\,.
\end{itemize}
\end{theorem}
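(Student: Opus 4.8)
The plan is to use Lemma \ref{lem-parallel} together with the family $\mathscr F$ of geodesic spheres from \eqref{eq-Fgeodesicspheres}, building the desired hypersurface by gluing two copies of an $(f_s,\phi)$-graph whose $\rho$-function is $\tau^{1/r}$, where $\tau$ is the distinguished solution from \eqref{eq-tau}. First I would fix $H_r>0$ and take $\tau$ as in \eqref{eq-tau}, so that $\tau(0)=0$, $\tau'>0$ on $(0,+\infty)$ (Proposition \ref{pro-limttauR}(i)), and the asymptotic behaviour of $\tau$ is governed by Proposition \ref{pro-limttauR}(ii). For case (i), $H_r>C_{\mathbb F}(r)$, Corollary \ref{cor-s0} furnishes an $s_0\in(0,+\infty)$ with $0<\tau|_{(0,s_0)}<1$ and $\tau(s_0)=1$; set $\rho=\tau^{1/r}$ on $(0,s_0)$ and let $\phi(s)=\int_{s_1}^s\rho(u)/\sqrt{1-\rho^2(u)}\,du$ as in \eqref{eq-phi}. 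By Lemma \ref{lem-parallel} the resulting $(f_s,\phi)$-graph $\Sigma_0$ is an $H_r$-hypersurface of $\hf\times\R$. For case (ii), $0<H_r\le C_{\mathbb F}(r)$, Corollary \ref{cor-s0} gives $0<\tau<1$ on all of $(0,+\infty)$; again $\rho=\tau^{1/r}$ and $\phi$ as above define an $H_r$-graph over the punctured region $\hf\setminus\{o\}$.

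The next step is to analyze the two boundary behaviours, $s\to 0$ and $s\to s_0$ (resp.\ $s\to+\infty$). As $s\to 0$ the geodesic spheres $f_s$ collapse to the point $o$; since $\tau(0)=0$ we have $\rho(0)=0$, hence by \eqref{eq-thetaparallel} the angle function tends to $\pm1$ and the graph closes up smoothly over $o$ at a single horizontal point — here one checks smoothness of the cap using that $\tau$ is a genuine solution at the regular singular point $s=0$ (as recorded after \eqref{eq-|a|}), so $\rho$ extends smoothly in the geodesic-polar variable. At $s=s_0$ in case (i): since $\tau(s_0)=1$ and $\tau'(s_0)=a(s_0)\tau(s_0)+b(s_0)=a(s_0)+b(s_0)$, one must verify $\tau'(s_0)>0$; this follows because at a point where $\tau=1$ the computation $\tau''=a'\tau+a\tau'+b'=a'+b'+a\tau'>0$ whenever $\tau'\le 0$ (using \eqref{eq-a'+b'}), exactly as in the proof of Proposition \ref{pro-limttauR}(i), so $\tau$ cannot reach the value $1$ with nonpositive derivative. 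Hence $\rho(s_0)=1$, $\rho'(s_0)>0$, and Lemma \ref{lem-convergenceintegral}(i) shows $\phi$ extends continuously to $s_0$ with a finite value; by \eqref{eq-normal} the normal becomes horizontal there, so reflecting $\Sigma_0$ across the horizontal hyperplane $\hf\times\{\phi(s_0)\}$ and taking the union with the bottom cap over $o$ yields a $C^1$ closed hypersurface, which is smooth by the standard argument that a rotationally symmetric hypersurface meeting a horizontal slice orthogonally along a geodesic sphere glues smoothly to its mirror image. This produces the embedded rotational sphere; embeddedness is immediate since $\phi$ is strictly monotone on $(0,s_0)$ and the horizontal sections are nested geodesic spheres. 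In case (ii), since $\tau\to H_r/C_{\mathbb F}(r)\le 1$ and $\tau$ is increasing, we have $\rho<1$ throughout, so $\phi$ is defined on all of $(0,+\infty)$; the graph is entire over $\hf$, tangent to $\hf\times\{0\}$ only at $o$, and its height $\phi(s)\to+\infty$ because near $s=\infty$ the integrand $\rho/\sqrt{1-\rho^2}$ is bounded below by a positive constant when $H_r<C_{\mathbb F}(r)$, and when $H_r=C_{\mathbb F}(r)$ one uses $1-\tau(s)\to 0$ to see the integral still diverges.

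Finally, strict convexity in both cases comes from the principal-curvature formula \eqref{eq-principalcurvatures}: $k_i=-\rho k_i^s>0$ for $1\le i\le n-1$ since each $k_i^s<0$ (outward orientation) and $\rho>0$, while $k_n=\rho'=(\tau^{1/r})'>0$ because $\tau'>0$ and $\tau>0$; at the cap over $o$ one passes to the limit and checks the umbilic point has positive curvature. The last sentence of (ii), nonexistence of compact $H_r$-hypersurfaces when $0<H_r\le C_{\mathbb F}(r)$, follows by a tangency/comparison argument: given a hypothetical compact $H_r$-hypersurface $\Sigma$, translate one of the entire strictly convex graphs constructed above vertically until it first touches $\Sigma$ from one side; the Maximum-Continuation Principle then forces $\Sigma$ to coincide with the entire graph, contradicting compactness. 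The main obstacle I anticipate is the smoothness of the glued pieces — at the collapsing cap over $o$ and at the reflection hyperplane — which requires care with the change to geodesic-polar-type coordinates and an even/odd parity check on $\rho$ as a function of the signed geodesic distance; the differential-equation estimates (monotonicity of $\tau$, the sign of $\tau'$ at $\tau=1$, the divergence of $\phi$) are comparatively routine given Proposition \ref{pro-limttauR} and Lemmas \ref{lem-parallel} and \ref{lem-convergenceintegral}.
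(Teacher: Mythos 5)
Your proposal is correct and follows essentially the same route as the paper: the distinguished solution $\tau$ of \eqref{eq-tau}, Corollary \ref{cor-s0} to locate $s_0$, Lemmas \ref{lem-parallel} and \ref{lem-convergenceintegral} to build and close the graph, reflection across $\hf\times\{\phi(s_0)\}$, strict convexity via \eqref{eq-principalcurvatures}, and the Maximum-Continuation tangency argument for nonexistence of compact examples. The only cosmetic difference is that the paper gets $\tau'(s_0)>0$ directly from Proposition \ref{pro-limttauR}(i) rather than re-deriving it, and settles smoothness of the glued pieces by observing that the trajectories of $\nabla\xi$ are geodesics.
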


\begin{proof}
Let $\mathscr F$ be an arbitrary  family of parallel geodesic spheres of
$\hf$ as in \eqref{eq-Fgeodesicspheres}. Consider the functions
$a$ and $b$ defined in \eqref{eq-a&bagain} and let
$\tau$ be the solution \eqref{eq-tau} of the ODE $y'=ay+b.$

If  $H_r>C_{\mathbb F}(r),$ we have from Corollary \ref{cor-s0} that there exists
$s_0\in (0,+\infty)$ satisfying
\[
\tau(0)=0<\tau|_{(0,s_0)}<1=\tau(s_0).
\]
Hence, by Lemma \ref{lem-parallel}, the $(f_s,\phi)$-graph
$\Sigma'$ with $\rho$-function $\rho:=\sqrt[r]{\tau|_{[0,s_0)}}$ is a rotational
$H_r$-graph of $\hfr$ over the open ball $B_{s_0}(o)\subseteq\hf$  such that
\begin{equation}\label{eq-phi}
\phi(s)=\int_{0}^{s}\frac{\rho(u)}{\sqrt{1-\rho^2(u)}}du, \,\,\,\, s\in [0,s_0).
\end{equation}

From Proposition \ref{pro-limttauR}-(i), one has $\tau'(s_0)>0,$
which implies that $\rho'(s_0)>0.$
Hence, by Lemma \ref{lem-convergenceintegral},
$\phi$ extends to $s_0$\,, i.e.,
\[\phi(s_0):=\lim_{s\rightarrow s_0}\phi(s)\]
is well defined. In particular, $\partial\Sigma'=S_{s_0}\times\{\phi(s_0)\}.$

\begin{figure}[htbp]
\includegraphics{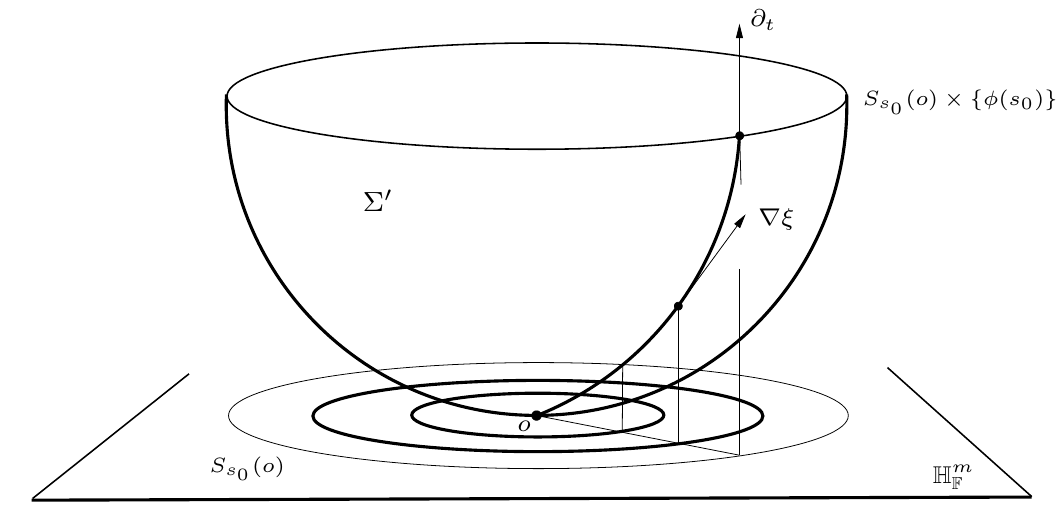}
\caption{ \small The trajectories of  $\nabla\xi$ on  $\Sigma'$ emanate  from
$o$ and meet  $\partial\Sigma'$ orthogonally.}
\label{fig-Hrsphere}
\end{figure}

Notice that $o\in\Sigma'$ is an isolated minimum of the height function
$\xi$ of $\Sigma'.$ Thus, $\Sigma'$ is strictly convex at $o.$ In addition,
by the identities \eqref{eq-principalcurvatures}, at any point of $\Sigma'-\{o\},$
all the principal curvatures are positive. Therefore, $\Sigma'$ is strictly
convex.

As we know,  the angle function $\theta$ of $\Sigma'$ is given by
\begin{equation} \label{eq-theta}
\theta=\frac{1}{\sqrt{1+(\phi')^2}}\,\cdot
\end{equation}
Since $\rho(s_0)=1$, we have  from  \eqref{eq-phi} that
$\phi'(s)\rightarrow +\infty$ as $s\rightarrow s_0$\,. This,
together with \eqref{eq-theta}, implies
that the tangent spaces of $\Sigma'$ along $\partial\Sigma'$
are vertical. Hence,  the trajectories of $\nabla\xi$ all emanate from $o$  and
meet  $\partial\Sigma'$ orthogonally (Fig. \ref{fig-Hrsphere}).
Recall that these trajectories are  geodesics which foliate  $\Sigma'.$

Now, set $\Sigma''$ for the reflection of $\Sigma'$ with respect to
$\hf\times\{\phi(s_0)\}$ and define
\[
\Sigma:={\rm closure}\,(\Sigma')\cup {\rm closure}\,(\Sigma''),
\]
that is, $\Sigma$ is the ``gluing'' of $\Sigma'$ and $\Sigma''$ along
the  $(n-1)$-sphere $S_{s_0}(o)\times\{\phi(s_0)\},$ which is  $C^\infty$-differentiable.
Since the  trajectories of $\nabla\xi$  are also $C^\infty,$ for being geodesics,
the resulting hypersurface $\Sigma$ is $C^\infty$-differentiable
with vertical tangent spaces along $S_{s_0}(o)\times\{\phi(s_0)\}.$
Therefore,  $\Sigma$
is a rotational strictly convex $H_r$-hypersur\-face of \,$\hf\times\R$ which is
homeomorphic to $\s^n$ and is symmetric with respect to $\hf\times\{\phi(s_0)\}.$
This proves (i).

Under the hypotheses in (ii), it follows from Corollary \ref{cor-s0}
that  $\tau$ satisfies:
\[
\tau(0)=0<\tau|_{(0,+\infty)}<1,
\]
so that the $(f_s,\phi)$-graph $\Sigma$
with $\rho$-function $\rho:=\sqrt[r]{\tau|_{[0,+\infty)}}$
is  an entire rotational $H_r$-graph of
$\hfr$ over $\hf\times\{0\}.$ Since $\phi(0)=0$ and
$\phi(s)>0$ for any $s>0,$  $\Sigma$ is contained in the closed half-space
$\hf\times[0,+\infty)$ and is tangent to $\hf\times\{0\}$ at  $o.$ Also, the height function
of $\Sigma$ is unbounded above. Indeed, from  Proposition \ref{pro-limttauR}-(i),
$\tau$, and so $\rho$, is increasing.
Thus, for a fixed $\delta >0,$ and any $s>\delta,$ one has
\[
\phi(s)=\int_{0}^{s}\frac{\rho(u)}{\sqrt{1-\rho^2(u)}}du\ge
\int_{\delta}^{s}\rho(u)du\ge\rho(\delta)(s-\delta),
\]
which implies  that $\phi$ is  unbounded above.
Also, arguing as  for the graph
$\Sigma'$ in the preceding paragraphs, we conclude that $\Sigma$ is strictly convex.

Observe that the  mean curvature vector of $\Sigma$
``points upwards'', that is, its mean convex side $\Lambda$ is the connected component of
$(\hf\times\R)-\Sigma$ which contains the axis $\{o\}\times\R.$ In particular,
$\Lambda$ is foliated by the balls $B_s(o)\times\{\phi(s)\}, \,\, s\in(0,+\infty).$

\begin{figure}[htbp]
\includegraphics{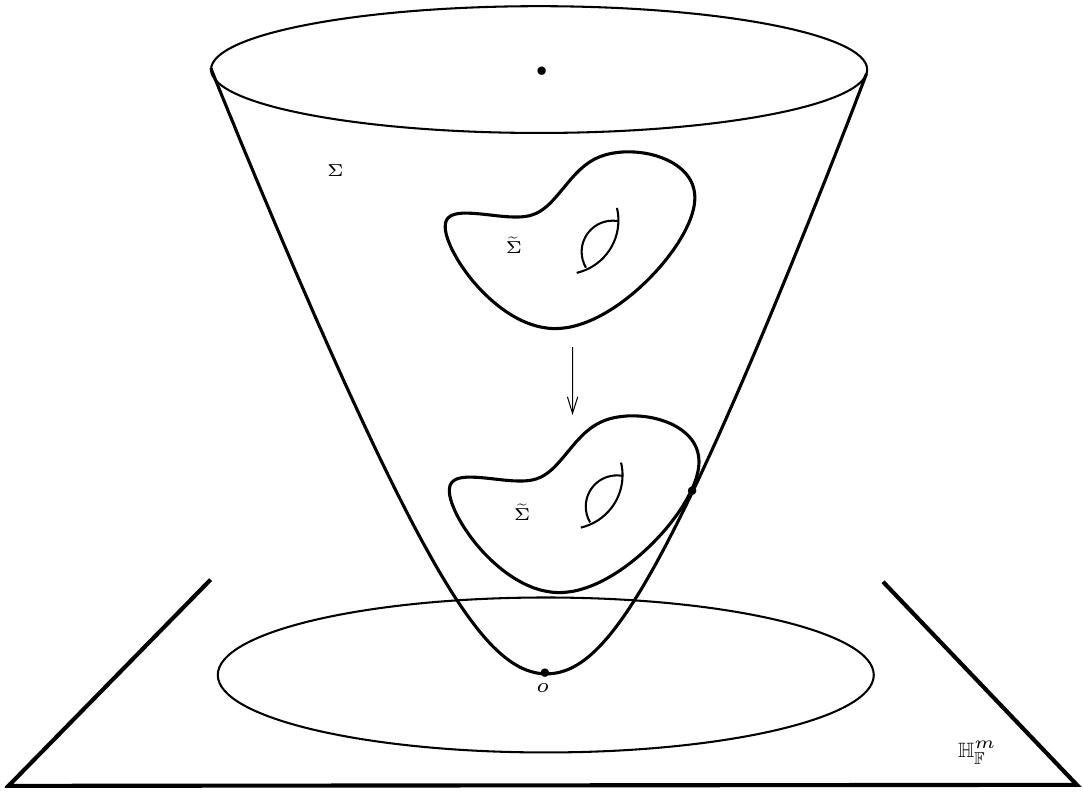}
\caption{\small After a downward translation of $\tilde\Sigma,$ it has a contact with $\Sigma.$}
\label{fig-tangency}
\end{figure}

Let us suppose that
there exists a compact  $H_r$-hypersurface $\widetilde\Sigma$ such that $0<H_r\le C_{\mathbb F}(r).$
Considering the fact that $\Lambda$ is ``horizontally and vertically unbounded'', it is easily seen that, after
a suitable vertical translation, we can assume $\widetilde\Sigma\subset\Lambda$ (Fig. \ref{fig-tangency}).
Now, translate $\widetilde\Sigma$ downward
until it has a first contact with $\Sigma.$ Since $\Sigma$ is strictly convex,
the Maximum-Continuation Principle applies and gives that $\Sigma$ and
$\widetilde\Sigma$ coincide, which is clearly impossible.
This shows that such a $\widetilde\Sigma$ cannot exist and finishes the proof of (ii).
\end{proof}

%\begin{remark}
%The case (ii) of Theorem \ref{th-Hrrotational}  does not occur for $r=n,$
%since $C_{\mathbb F}(n)=0$.
%\end{remark}

\begin{remark}  \label{rem-zerokn}
Let $\Sigma\subset\hfr$ be an entire $H_r$-graph as in Theorem \ref{th-Hrrotational}-(ii).
Since $C_{\mathbb F}(n)=0$, we must have $r<n.$ Also,
the associated function  $\tau:[0,+\infty)\rightarrow\R$ is positive, bounded and increasing, so that
\[
\lim_{s\rightarrow+\infty}\tau'(s)=0,
\]
which implies that $\rho'(s)\rightarrow 0$ as $s\rightarrow+\infty.$  %for $\rho'(s)=\frac{1}{r}(\tau(s))^{(1-r)/r}\tau'(s).$
This, together with \eqref{eq-principalcurvatures}, gives that the principal curvature $k_n=\rho'(s)$ goes to zero as
$s\rightarrow+\infty.$ In particular, \emph{the least principal curvature function of $\Sigma$ is not bounded away from zero}.
\end{remark}

%%%%%%%%%%%%%%%%%%%%%%%%%%%%%%%%%%%%%

Next, we apply the
method of $(f_s,\phi)$-graphs to produce
one-parameter families of $H_r(>0)$-annuli  in $\hfr,$ $1\le r<n.$
For that,  fix $s_0=\lambda>0$ and consider
the solution $\tau(s)$ of $y'=ay+b$ given
in \eqref{eq-generalsolution}, which  satisfies
the initial condition $\tau_0=\tau(\lambda)=1.$
From \eqref{eq-a&bagain},  we have
\begin{equation} \label{eq-tau'lambda}
\tau'(\lambda)=a(\lambda)+b(\lambda)=\frac{r(H_r-|H_r^\lambda|)}{|H_{r-1}^\lambda|}\,\cdot
\end{equation}

Therefore, given $H_r>0,$  $r\in\{1,\dots, n-1\},$ it follows from \eqref{eq-tau'lambda}
that $\tau'(\lambda)<0$ if and only if $H_r<|H_r^\lambda|.$
Thus, considering \eqref{eq-hrinfity}, we can define $\delta_{H_r}$ as the largest positive
constant with the following property:
\begin{equation} \label{eq-deltaHr}
\lambda\in (0,\delta_{H_r})\quad\Leftrightarrow\quad \tau(\lambda)=1 \,\,\, \text{and} \,\,\, \tau'(\lambda)<0.
\end{equation}

Observing that  $\tau|_{[\lambda,+\infty)}$ is a  positive function for any
$\lambda\in (0,\delta_{H_r}),$ we distinguish the cases:
\begin{itemize}[parsep=1ex]
  \item [i)] There exists $\overbar\lambda\in (\lambda,+\infty)$ such that $\tau(\overbar\lambda)=1$ \,(Fig. \ref{fig-graph11}).
  \item [ii)] $\tau(s)<1$ for all $s\in(\lambda,+\infty)$ \,(Fig. \ref{fig-graph21}).
\end{itemize}

\begin{figure} %\label{fig-graphs}
  \begin{subfigure}[hbt]{.5\textwidth}
    \includegraphics{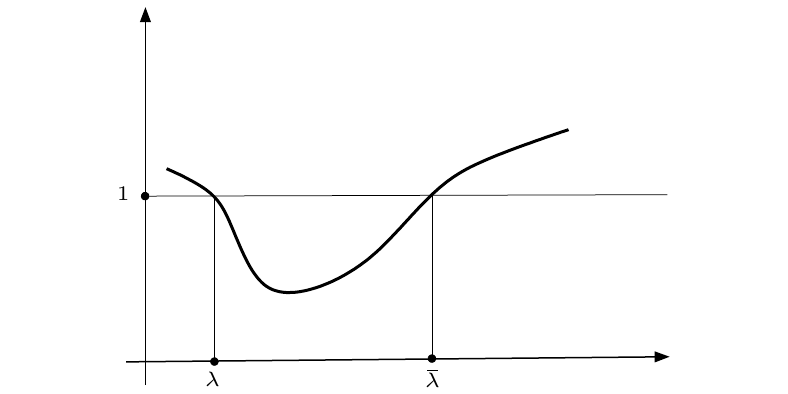}
    \caption{}
    \label{fig-graph11}
  \end{subfigure}
  \hspace{1.5cm}%
  \begin{subfigure}[h]{.5\textwidth}
    \includegraphics{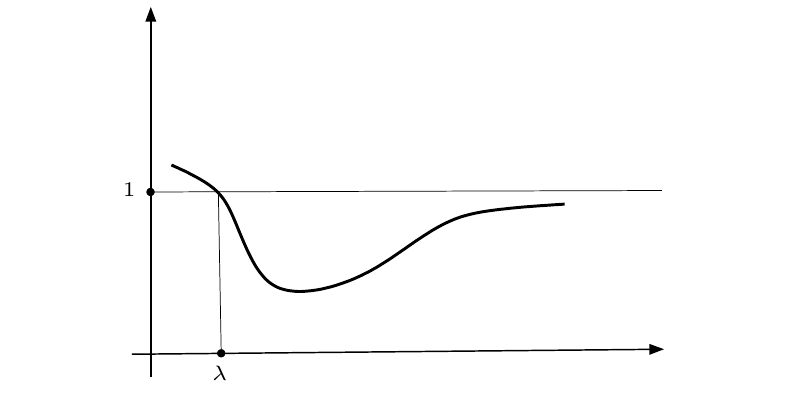}
    \caption{}
    \label{fig-graph21}
  \end{subfigure}
  \caption{\small The two types  of  solutions $\tau$, as in \eqref{eq-generalsolution},  satisfying $\tau(\lambda)=1$.}
\end{figure}

\begin{theorem} \label{th-Hrannuli}
Given $r\in\{1,\dots ,n-1\}$ and $H_r>0,$ let $\delta_{H_r}>0$ be as in
\eqref{eq-deltaHr}.
Then there exists a one-parameter family
\[
\mathscr S\:=\{\Sigma(\lambda)\,;\,\lambda\in(0,\delta_{H_r})\}
\]
of  properly embedded  rotational $H_r$-hypersurfaces
in $\hf\times\R$ which are all homeomorphic to the $n$-annulus  $\s^{n-1}\times\R.$
In addition, the following assertions hold:

\begin{itemize}[parsep=1ex]
\item[\rm i)] If \,$H_r>C_{\mathbb F}(r),$
each $\Sigma(\lambda)\in\mathscr S$ is Delaunay-type, i.e., it
is periodic in the vertical direction, and has unduloids as the trajectories of
the gradient of its  height function.

\item[\rm ii)] If $0<H_r\le C_{\mathbb F}(r)$,
each hypersurface $\Sigma(\lambda)\in\mathscr S$ is symmetric with respect to $\hf\times\{0\}$
and has unbounded height function.
\end{itemize}
\end{theorem}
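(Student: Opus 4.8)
The plan is to build each $\Sigma(\lambda)$ by gluing together reflected copies of $(f_s,\phi)$-graphs, exactly as in the proof of Theorem \ref{th-Hrrotational}, but now starting from the solution $\tau$ of $y'=ay+b$ that meets the level $1$ at the prescribed radius $s_0=\lambda$. First I would fix $\lambda\in(0,\delta_{H_r})$ and take $\tau$ as in \eqref{eq-generalsolution} with $\tau_0=\tau(\lambda)=1$; by the definition \eqref{eq-deltaHr} of $\delta_{H_r}$ we have $\tau'(\lambda)<0$. A short ODE argument — analogous to Proposition \ref{pro-limttauR}(i), using $\tau''=a'\tau+a\tau'+b'$ together with the inequalities \eqref{eq-a'+b'} — shows that once $\tau$ starts decreasing it can have at most one interior critical point, which must be a minimum; hence $\tau$ decreases from $1$ at $s=\lambda$, reaches a positive minimum $\tau(s_*)<1$ (if it has one), and thereafter is increasing. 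So exactly the two alternatives (i) $\tau(\overbar\lambda)=1$ for some $\overbar\lambda>\lambda$, and (ii) $0<\tau|_{(\lambda,+\infty)}<1$, can occur, as depicted in Figures \ref{fig-graph11} and \ref{fig-graph21}. In case (ii) one checks via Proposition \ref{pro-limttauR}(ii) applied on $[\lambda,+\infty)$ that $\lim_{s\to+\infty}\tau(s)=H_r/C_{\mathbb F}(r)\le 1$, consistent with $0<H_r\le C_{\mathbb F}(r)$; and the dichotomy (i) vs.\ (ii) is governed by the sign of $H_r-C_{\mathbb F}(r)$, which is how the two items of the theorem split.

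Next I would assemble the hypersurface. On the interval where $0<\tau<1$ with $\tau(\lambda)=1$ and $\tau'(\lambda)<0$ (resp.\ $\tau(\overbar\lambda)=1$, $\tau'(\overbar\lambda)>0$), Lemma \ref{lem-convergenceintegral} guarantees that $\phi(s)=\int\rho/\sqrt{1-\rho^2}\,ds$, $\rho=\tau^{1/r}$, extends continuously to the endpoint(s), so the $(f_s,\phi)$-graph is a compact piece $\Sigma'$ whose boundary is a geodesic sphere $S_\lambda(o)\times\{\phi(\lambda)\}$ (and, in case (i), also $S_{\overbar\lambda}(o)\times\{\phi(\overbar\lambda)\}$) along which the tangent spaces are vertical, since $\rho\to 1$ forces $\phi'\to\infty$ and hence $\Theta\to 0$ by \eqref{eq-theta}. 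As in Theorem \ref{th-Hrrotational}, the trajectories of $\nabla\xi$ are geodesics meeting these spheres orthogonally, so reflecting $\Sigma'$ across the horizontal hyperplane through each vertical-tangent sphere produces a $C^\infty$ extension. In case (i) the two reflections across $\hf\times\{\phi(\lambda)\}$ and $\hf\times\{\phi(\overbar\lambda)\}$ generate a discrete group of vertical translations; iterating yields a complete, properly embedded hypersurface, periodic in the $t$-direction, homeomorphic to $\s^{n-1}\times\R$, whose $\nabla\xi$-trajectories are the unduloid-type curves — this is the Delaunay-type annulus of (i). In case (ii) one reflects only across $\hf\times\{\phi(\lambda)\}$; the two halves each have $\phi$ unbounded above by the same estimate $\phi(s)\ge\rho(\delta)(s-\delta)$ used in Theorem \ref{th-Hrrotational}(ii) (valid because $\rho$ is eventually increasing and bounded below away from $0$), giving a hypersurface symmetric about $\hf\times\{0\}$ with unbounded height function, again homeomorphic to $\s^{n-1}\times\R$ — this is (ii). That $H_r$ is constant on each piece is immediate from Lemma \ref{lem-parallel}, and it is preserved under gluing along a hypersurface where the two pieces agree to infinite order (reflection across a totally geodesic $P_t$, with vertical tangent planes).

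Finally I would verify properness and embeddedness globally. Properness follows because each compact building block projects onto an annular region $\{\lambda\le \mathrm{dist}(o,\cdot)\le\overbar\lambda\}$ (case (i)) or an exterior region (case (ii)) of $\hf$, the vertical translates in case (i) go off to $t=\pm\infty$, and in case (ii) the height is unbounded; no point of $\hf\times\R$ has infinitely many preimages in a compact set. Embeddedness follows from the structure of $(f_s,\phi)$-graphs — over each radius $s$ the graph has a single sheet at height $\phi(s)$ — together with the fact that $\phi$ is strictly monotone on each maximal subinterval between consecutive vertical-tangent spheres (since $\phi'>0$ there), so the pieces stack without self-intersection. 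I expect the main obstacle to be the careful bookkeeping in case (i): showing that the successive reflections close up to give a genuinely embedded, singly periodic annulus (rather than a surface that wraps back on itself) and identifying the $\nabla\xi$-trajectories as honest unduloids; this is where one must track the period $2(\phi(\overbar\lambda)-\phi(\lambda))$ and check that the reflected radii alternate between $\lambda$ and $\overbar\lambda$ without overlap. Everything else is a direct transcription of the arguments already given for Theorem \ref{th-Hrrotational} and Lemmas \ref{lem-parallel}, \ref{lem-convergenceintegral}.
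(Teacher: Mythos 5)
Your proposal is correct and follows the paper's proof essentially step for step: the same solution $\tau$ of $y'=ay+b$ with $\tau(\lambda)=1$ and $\tau'(\lambda)<0$, the same observation that any critical point of $\tau$ is a minimum (via $\tau''=a'\tau+b'>0$ there), hence the same dichotomy governed by $\lim_{s\to+\infty}\tau=H_r/C_{\mathbb F}(r)$, and the same assembly of the annulus by reflecting the $(f_s,\phi)$-block across horizontal hyperplanes using Lemmas \ref{lem-parallel} and \ref{lem-convergenceintegral}. The only divergence is that you make explicit the properness/embeddedness bookkeeping for the stacked reflections, which the paper leaves implicit; that part of your argument is sound.
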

\begin{proof}
Fix $\lambda\in(0,\delta_{H_r})$ and recall that $\tau$ is decreasing near $\lambda.$
Thus, if $H_r>C_{\mathbb F}(r),$ it follows from   \eqref{eq-limF} that
there exists $\bar\lambda\in (\lambda,+\infty)$ such that
\[
0<\tau|_{(\lambda,\bar\lambda)}< 1=\tau(\bar\lambda)=\tau(\lambda).
\]

Let us observe that a critical point $s_1$ of $\tau$ is necessarily a minimum, since
$\tau''(s_1)=a'(s_1)\tau(s_1)+b'(s_1)>0.$
Therefore,  $\tau$ must have a unique  local minimum at a point  between $\lambda$
and $\overbar\lambda.$ In particular,  $\tau'(\bar\lambda)>0$ (Fig. \ref{fig-graph11}).

\begin{figure}[htbp]
\includegraphics{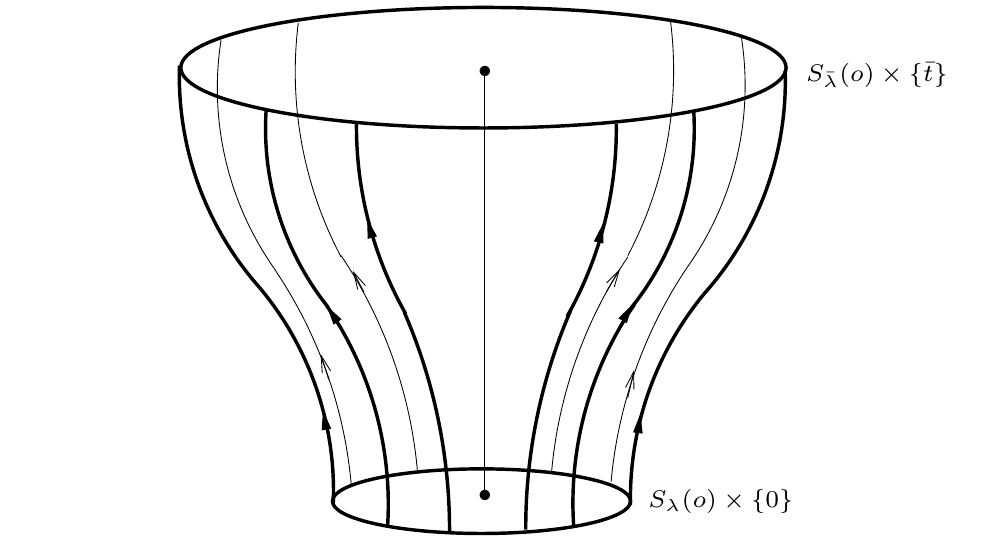}
\caption{\small The ``block''  of a Delaunay-type $H_r$-hypersurface in $\hf\times\R.$}
\label{fig-minimaldelaynay}
\end{figure}

Setting $\tau_\lambda:=\tau|_{(\lambda,\bar\lambda)}$,
it follows from the above considerations and Lemmas \ref{lem-parallel} and  \ref{lem-convergenceintegral}  that
the $(f_s,\phi)$-graph $\Sigma'(\lambda)$   with $\rho$-function
$\rho(s)=\sqrt[r]{\tau_\lambda}$ is a  bounded
$H_r$-hypersurface of $\hf\times\R.$ Moreover, $\Sigma'(\lambda)$
is  homeomorphic to
$\s^{n-1}\times (\lambda,\bar\lambda)$ and  has boundary (see Fig. \ref{fig-minimaldelaynay}):
\[
\partial\Sigma'(\lambda)=(S_\lambda(o)\times \{0\})\cup (S_{\bar\lambda}(o)\times \{\phi(\bar\lambda)\}).
\]
%\[
%\phi(s)=\int_{\lambda}^{s}\frac{\rho(u)}{\sqrt{1-\rho^2(u)}}du, \,\,\, s\in(\lambda,\bar\lambda).
%\]

We also have that the tangent spaces of $\Sigma'(\lambda)$ are vertical
along its boundary $\partial\Sigma'(\lambda)$,
for  $\rho(\lambda)=\rho(\bar\lambda)=1.$
Therefore, we obtain a properly embedded rotational $H_r$-hypersurface
$\Sigma(\lambda)$ from $\Sigma'(\lambda)$ by
continuously reflecting it with respect to the horizontal hyperplanes
$\hf\times\{k\phi(\bar\lambda)\}, \, k\in\Z.$ This proves (i).

Now, let us suppose that $0<H_r\le C_{\mathbb F}(r).$
In this case,  \eqref{eq-limF} gives that
\[
0<\tau|_{(\lambda,+\infty)}<1,
\]
so that the $(f_s,\phi)$-graph $\Sigma'(\lambda)$
determined by  $\rho=\tau|_{(\lambda,+\infty)}^{1/r}$ is an $H_r$-hypersurface
of $\hf\times\R$ with boundary
$\partial\Sigma'(\lambda)=S_\lambda(o)\times\{0\}$ (Fig. \ref{fig-hourglass}).
 By reflecting  $\Sigma'(\lambda)$ with respect to
$\hf\times\{0\},$ as we did before,  we obtain the embedded $H_r$-hypersurface
$\Sigma(\lambda)$ as stated.

It remains to show that the height function of $\Sigma(\lambda)$ is unbounded.
For that, we have just to observe that the infimum of $\tau$ in $[\lambda,+\infty)$
is positive, since $\tau$ itself is positive in this interval, and its limit as
$s\rightarrow+\infty$ is $H_r/C_{\mathbb F}(r)>0.$ So, the same is true for $\rho=\tau^{1/r}.$
Therefore,
\[
\phi(s)=\int_{\lambda}^{s}\frac{\rho(u)}{\sqrt{1-\rho^2(u)}}du>
\int_{\lambda}^{s}\rho(u)du > \inf\rho|_{[\lambda,+\infty)}(s-\lambda),
\]
from which we conclude that $\phi$ is  unbounded.
\end{proof}

\begin{remark}
The case $\mathbb F=\R$ of Theorem \ref{th-Hrrotational}
was previously established in \cite{elbert-earp}, whereas the
case $\mathbb F=\R$ and $r=1$ of Theorem \ref{th-Hrannuli}
was considered in \cite{berard-saearp}.
Nevertheless, the methods employed in these works are different from ours, and
are not applicable to the products $\hfr,$ $\mathbb F\ne\R.$
\end{remark}

\begin{figure}[htbp]
\includegraphics{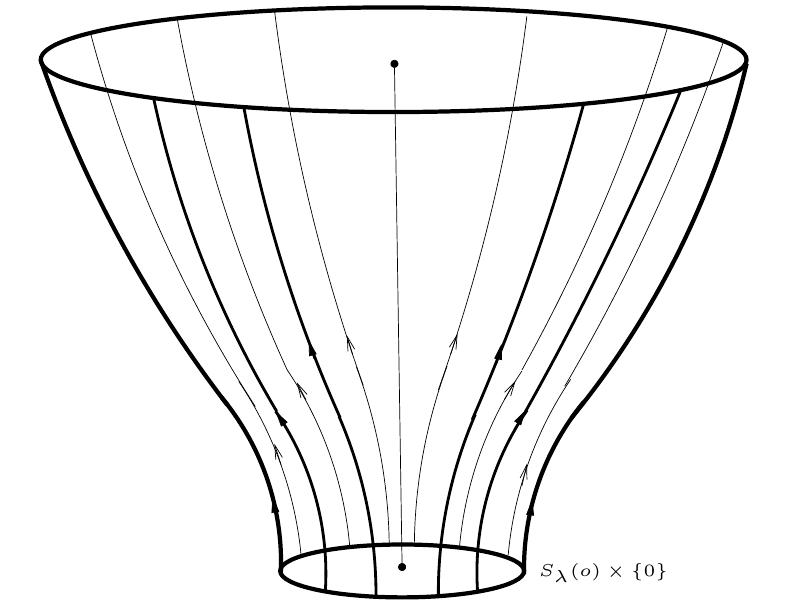}
\caption{\small The $(f_s,\phi)$-graph $\Sigma'(\lambda),$ on which all the
trajectories of $\nabla\xi$ emanate from $\partial\Sigma'(\lambda)$ orthogonally.}
\label{fig-hourglass}
\end{figure}

We proceed now to the classification of the complete
rotational $H_r$-hypersurfaces of $\hfr$ whose height functions
are Morse-type, i.e.,  have isolated critical points (if any).
As we shall see, besides cylinders over geodesic spheres,
these hypersurfaces are precisely the ones we obtained
in Theorems \ref{th-Hrrotational} and \ref{th-Hrannuli}. In particular,
any of them is embedded.
We point out that, in \cite{berard-saearp}, it was shown that, for any $H_1>0,$  there exist
complete  rotational $H_1$-hypersurfaces in $\h^n\times\R$  which are not embedded.
In accordance with our results, the height function of none of these $H_1$-hypersurfaces is Morse-type.

Firstly, let us recall that the $H_r$-hypersurfaces in Theorems \ref{th-Hrrotational} and \ref{th-Hrannuli}
were constructed from a single $(f_s,\phi)$-graph whose  associated  $\tau$-function is a solution of
the ODE $y'=ay+b,$ where $a$ and $b$ are  as in \eqref{eq-a&bagain}. For such a $\tau$,
there is a maximal interval $(s_0,s_1)$, $0\le s_0<s_1\le+\infty,$ such that
$0<\tau|_{(s_0,s_1)}<1.$

Notice that each choice of $H_r$ determines the function $b$ and, so, the equation $y'=ay+b.$
The corresponding graph, then, is determined by the ordering of the constants $H_r$ and $C_{\mathbb F}(r),$
as well as by the values of $s_0$ and $\tau(s_0).$

Below,  we list all the occurrences of  $s_0$ and $\tau(s_0)$ in
Theorems \ref{th-Hrrotational} and \ref{th-Hrannuli} with respect to the
ordering of $H_r$ and $C_{\mathscr F}(r)$:
\begin{itemize}[parsep=1ex]
  \item [C1)] $s_0=0$, $\tau(s_0)=0$,  $H_r>C_{\mathbb F}(r)$.
  \item [C2)] $s_0=0$, $\tau(s_0)=0$,  $H_r\le C_{\mathbb F}(r)$.
  \item [C3)] $s_0>0$, $\tau(s_0)=1$,  $H_r>C_{\mathbb F}(r)$.
  \item [C4)] $s_0>0$, $\tau(s_0)=1$,   $H_r\le C_{\mathbb F}(r)$.
\end{itemize}

The cases C1 and C2 correspond to Theorem \ref{th-Hrrotational}-(i) and Theorem \ref{th-Hrrotational}-(ii), respectively,
whereas C3 and C4 correspond to Theorem \ref{th-Hrannuli}-(i) and Theorem \ref{th-Hrannuli}-(ii). We also remark that
$s_1<+\infty$ in cases C1 and C3, with $\tau(s_1)=1,$ and that $s_1=+\infty$ in cases C2 and C4.

Let $M_0$ be a hypersurface of a Riemannian manifold $M.$ It is easily seen
that $\Sigma:=M_0\times\R$ is a hypersurface of $M\times\R$ whose tangent spaces are all vertical, so
that $\partial_t$ is a principal direction of $\Sigma$
with vanishing principal curvature. In particular,
$H_n=0$ on $\Sigma.$ Also,
for all $r\in\{1,\dots ,n-1\},$ the $r$-th mean curvatures of
$M_0$ and $\Sigma$ at $x\in M_0$ and $(x,t)\in\Sigma$ coincide. In particular,
$M_0$ is an $H_{r(<n)}$-hypersurface of $M$ if and only if $\Sigma$ is an
$H_{r(<n)}$-hypersurface of $M\times\R.$
We call $\Sigma:=M_0\times\R$ the \emph{cylinder} over $M_0.$

\begin{theorem}\label{th-classificationHr>0}
  Let $\Sigma$ be a connected complete  rotational $H_r(>0)$-hyper\-surface of
  $\hfr$ whose height function is Morse-type. Then,
  $\Sigma$ is either a cylinder over a geodesic sphere of \,$\hf$ or
  one of the embedded $H_r$-hypersurfaces of Theorems \ref{th-Hrrotational}--\ref{th-Hrannuli}.
\end{theorem}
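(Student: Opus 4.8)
The plan is to show that any connected, complete, rotational $H_r(>0)$-hypersurface $\Sigma$ with Morse-type height function is built out of $(f_s,\phi)$-graphs over concentric geodesic balls/spheres, and then to invoke the classification of solutions $\tau$ of $y'=ay+b$ from Corollary \ref{cor-s0} and Proposition \ref{pro-limttauR}. First I would fix the axis $\{o\}\times\R$ given by the rotational hypothesis and analyze the height function $\xi$ on $\Sigma$. By \eqref{eq-hessianheightfunction}, at a critical point $x$ of $\xi$ one has $\mathrm{Hess}\,\xi=\pm A$, so $x$ is horizontal ($\Theta(x)=\pm1$) and, since the height function is Morse-type, such critical points are isolated and nondegenerate; in particular $A$ is nonsingular at $x$, and because $H_r>0$ a standard argument (the same one behind the Maximum-Continuation Principle, via \cite{cheng-rosenberg}) forces $\Sigma$ to be locally a strictly convex graph near $x$ with all principal curvatures of one sign. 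Away from the horizontal points, $0<|\Theta|<1$, and the rotational symmetry together with \eqref{eq-sum} shows that on each component of $\Sigma\setminus\{\text{horizontal points}\}$ the trajectories of $\nabla\xi$ are lines of curvature projecting to the radial geodesics $\gamma_p$ emanating from $o$; hence each such piece is, up to vertical translation and reflection, an $(f_s,\phi)$-graph on the isoparametric family $\mathscr F$ of geodesic spheres of $\hf$, with $\rho$-function $\rho=\tau^{1/r}$ where $\tau$ solves \eqref{eq-difequation} with coefficients \eqref{eq-a&bagain}.

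Next I would run the bookkeeping on the maximal interval $(s_0,s_1)$, $0\le s_0<s_1\le+\infty$, on which $0<\tau<1$. At an endpoint $s_i\in(0,+\infty)$ with $\tau(s_i)=1$ we get a vertical tangent space along the sphere $S_{s_i}(o)\times\{\phi(s_i)\}$ (by \eqref{eq-thetaparallel} and the argument in the proof of Theorem \ref{th-Hrrotational}), so Lemma \ref{lem-convergenceintegral} guarantees $\phi$ extends there and the graph reflects $C^\infty$-smoothly across that horizontal hyperplane, since the trajectories of $\nabla\xi$ are geodesics; if instead an endpoint is $s_0=0$, the only solution of the regular-singular ODE that reaches $s=0$ is \eqref{eq-tau} with $\tau(0)=0$, yielding a smooth closing-up at the point $o$ exactly as in Theorem \ref{th-Hrrotational}. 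The degenerate possibility $\tau\equiv\text{const}$ with $0<\tau<1$ would force $k_n=\rho'\equiv0$ and, by \eqref{eq-Hr}, $H_r=(-1)^rH_r^s\rho^r$, i.e. $|H_r^s|$ constant in $s$ — impossible for geodesic spheres of $\hf$ by \eqref{eq-hrhn}–\eqref{eq-hrhc2}; so this case does not occur, and instead the genuinely "vertical" behaviour $\Theta\equiv0$ (no horizontal points at all, $\rho\equiv1$) is ruled out for the hypersurfaces at hand unless $\Sigma$ is a cylinder $M_0\times\R$, which by the remark preceding the theorem forces $M_0=S_c(o)$ a geodesic sphere (the only rotational $H_{r}$-hypersurface of $\hf$ about $o$). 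Assembling the global hypersurface by continuation from one graph piece across its endpoint hyperplanes, completeness and connectedness force $\Sigma$ to be the union of congruent translated/reflected copies, and by Corollary \ref{cor-s0} the ordering of $H_r$ and $C_{\mathbb F}(r)$ together with the endpoint data $(s_0,\tau(s_0))$ falls into exactly one of the cases C1–C4 listed above, i.e. $\Sigma$ is one of the hypersurfaces of Theorems \ref{th-Hrrotational}–\ref{th-Hrannuli}.

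The main obstacle is the rigidity step at the horizontal points and the verification that $\Sigma$ genuinely decomposes into $(f_s,\phi)$-graph pieces: one must argue that near an isolated critical point of $\xi$ the hypersurface is exactly the radial graph $\Sigma'$ of Theorem \ref{th-Hrrotational}-(i) and not some other strictly convex cap, and here the Maximum-Continuation Principle does the work — the candidate graph $\Sigma'$ is strictly convex, tangent to $\Sigma$ at the horizontal point with matching normal, and lies locally on one side, hence coincides with $\Sigma$ near that point; unique continuation then propagates the identification of $\rho$-functions along the whole piece, and the reflection/periodicity structure is forced by the smooth gluing along vertical-tangent hyperplanes. The remaining details — that a Morse-type height function cannot produce a half-open graph piece that neither closes up at $o$ nor reaches a vertical hyperplane in finite parameter length (handled by showing $\tau$ is monotone between consecutive critical values of $\phi$, exactly as in Proposition \ref{pro-limttauR}-(i)) — are routine once the ODE dichotomy is in place.
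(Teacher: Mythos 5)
Your proposal is correct and follows essentially the same route as the paper: reduce to a non-cylindrical piece which is an $(f_s,\phi)$-graph, run the endpoint dichotomy $\tau(s_0)\in\{0,1\}$ on the maximal interval where $0<\tau<1$, use the Morse-type hypothesis to exclude a positive-radius endpoint with $\tau(s_0)=0$, and conclude by uniqueness of solutions of $y'=ay+b$ together with the Maximum-Continuation Principle, landing in one of the cases C1--C4. Two minor slips worth noting, neither of which affects the argument: Morse-type here means only that critical points of $\xi$ are isolated (not nondegenerate), and the case $\tau(s_0)=0$ with $s_0>0$ is ruled out because $\phi'(s_0)=0$ would make the entire sphere $S_{s_0}(o)\times\{\phi(s_0)\}$ consist of critical points of $\xi$ --- not by monotonicity of $\tau$.
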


\begin{proof}
  Suppose that $\Sigma$ is not a cylinder. In this case, we have that the
  open set $\Sigma_0\subset\Sigma$
  on which $\theta\nabla\xi$ never vanishes is nonempty. Since $\Sigma_0$ contains no vertical points, for a given
  $x_0\in\Sigma_0$\,, there is an open neighborhood  $\Sigma'$ of $x_0$ in $\Sigma_0$ which is a graph
  over an open set $\Omega$ of $\hf$. Thus,
  since $\Sigma$ is rotational and $\Sigma_0$ contains no horizontal points,
  after possibly a reflection with respect
  to a horizontal hyperplane, we can assume that $\Sigma'$ is an $(f_s,\phi)$-graph over $\Omega.$
  (Recall that, in our setting,  the $\phi$-function of an $(f_s,\phi)$-graph is required to be radially increasing.)

   By Lemma \ref{lem-parallel}, the function $\tau=\rho^r$ associated to $\Sigma'$ is a solution
   of $y'=ay+b,$ with $a$ and $b$ as in \eqref{eq-a&bagain}. In addition, since $\Sigma$ is  complete, there
   exists a maximal interval $(s_0,s_1)$, $0\le s_0<s_1\le+\infty,$ such that $0<\tau|_{(s_0,s_1)}<1.$
   In particular, we have the following two possibilities:
   \[
   \tau(s_0)=0 \quad\text{or}\quad  \tau(s_0)=1.
   \]

  Suppose that $\tau(s_0)=0.$ After a vertical translation, we can assume that
  \[
  \phi(s)=\int_{s_0}^{s}\frac{\rho(u)}{\sqrt{1-\rho^2(u)}}du.
  \]
  In particular, $\phi(s_0)=\phi'(s_0)=0.$
  If $s_0>0,$ these equalities imply that the sphere $S_{s_0}(o)\times\{0\}$ of
  $\hf$ is contained in  $\partial\Sigma',$ and that $\nabla\xi$ vanishes
  at all of its points. This, however, contradicts that the height function of
  $\Sigma$ is Morse-type. Hence, $s_0=0,$
  so that the $\tau$-function of $\Sigma'$ satisfies the initial condition $\tau(0)=0.$

  If $H_r>C_{\mathbb F}(r),$ by the uniqueness of solutions of linear ODE's satisfying an initial condition,
  the function $\tau$ such that $\tau(0)=0$ coincides with the one in case C1 above. Thus, the corresponding
  $\phi$-functions also coincide, which clearly implies that $\Sigma'$ is an open set of
  the (strictly convex) $H_r$-sphere obtained in Theorem \ref{th-Hrrotational}-(i).
  Therefore, by the Maximum-Continuation Principle, $\Sigma$ coincides with this $H_r$-sphere.
  If $H_r\le C_{\mathbb F}(r),$ then  $\tau$ coincides
  with the solution of  case C2. Analogously, we conclude that $\Sigma$ is
  an entire graph as in Theorem \ref{th-Hrrotational}-(ii).

  Let us suppose now that $\tau(s_0)=1.$
  Since $0<\tau<1$ in $(s_0,s_1),$ $\tau$ is decreasing
  near $s_0$\,, which implies that $r<n.$ (Indeed, for
  $r=n,$ we have $\tau'=b>0.$) In this case, as we have discussed,
  $|a(s)|\rightarrow+\infty$ as $s\rightarrow 0$ (cf. \eqref{eq-|a|}), and $b(0)$ is $0$ or $H_1$.
  In particular, any solution $\tau$ of $y'=ay+b$ at $s=0$ must satisfy $\tau(0)=0,$
  so that  $s_0\ne 0.$   Hence,   $s_0\in (0,\delta_{H_r}),$ where $\delta_{H_r}$ is
  the positive constant defined in \eqref{eq-deltaHr}.

  Setting $s_0=\lambda$ and observing that any of the hypersurfaces obtained in Theorem \ref{th-Hrannuli}
  is  strictly convex at some of its points, we can argue as in the second from the last paragraph
  and conclude that $\Sigma$ is  the $H_r$-hypersurface $\Sigma(\lambda)$ of
  Theorem  \ref{th-Hrannuli}-(i) or Theorem \ref{th-Hrannuli}-(ii), according to whether
  $H_r>C_{\mathbb F}(r)$ or $H_r\le C_{\mathbb F}(r).$ This finishes the proof.
\end{proof}

\subsection{Rotational $H_r(>0)$-hypersurfaces of $\s^n\times\R$}
In this section, we apply the method of $(f_s,\phi)$-graphs to construct
and classify rotational $H_r(>0)$-hypersurfaces in $\s^n\times\R.$

As we did before, let us fix a
point $o\in\s^n$ and consider a family
\begin{equation}  \label{eq-parallelspheresSn}
\mathscr F:=\{f_s:\s^{n-1}\rightarrow \s^n\,;\, s\in(0,\pi)\}
\end{equation}
of parallel geodesic spheres $f_s$ of $\s^n$ with radius $s$
and center  $o.$ As is well known,
each $f_s$ is totally umbilical, having principal curvatures all equal
to $-\cot s$ with respect to the outward orientation. In particular,
$\mathscr F$ is isoparametric.

From a direct computation, we get that the coefficients
$a$ and $b$ of the ODE $y'=ay+b$ determined by
$\mathscr F$ and any given $H_r>0$ are
\begin{equation}\label{eq-a&bsn}
a(s)=-(n-r)\cot s \quad\text{and}\quad b(s)=b_r\tan^{r-1}(s)\,, \,\,\, b_r=rH_r{{{n-1}\choose{r-1}}}^{-1},
\end{equation}
and that the corresponding general solution is:
%Assume that $s\in (0,\pi/2).$ In this case,
%all principal curvatures of $f_s$ are negative. Thus, for any
%$r\in\{1,\dots , n-1\},$ the $r$-th mean curvature $H_r^s$ of
%$f_s$ never vanishes, so that the coefficients $s\in (0,\pi/2).$ In the particular case
%$r=1,$ $a$ and $b$ are well defined in $(0,\pi).$
\begin{equation} \label{eq-solutionodesn}
\tau(s):= \left(\frac{\sin s_0}{\sin s}\right)^{n-r}\left(\tau_0+
\frac{b_r}{\sin^{n-r}(s_0)}\int_{s_0}^{s}\frac{\sin^{n-1}(u)}{\cos^{r-1}(u)}du\right), \,\,s_0\,, s\in(0,\mathcal R),
\end{equation}
where $\tau_0=\tau(s_0)\in\R$ and
\[
\mathcal R:=\left\{
\begin{array}{ccc}
\pi/2 & \text{if} & r>1\\
\pi  & \text{if} & r=1
\end{array}
\right. .
\]
Also, it is easily checked that
\begin{equation}\label{eq-tausn}
\tau(s):=\left\{
\begin{array}{lcl}
   \frac{b_r}{\sin^{n-r}(s)}\int_{0}^{s}\frac{\sin^{n-1}(u)}{\cos^{r-1}(u)}du& \text{if} & s\in (0,\mathcal R) \\[1.5ex]
  0 & \text{if} & s=0
\end{array}
\right.
\end{equation}
is a well defined solution of $y'=ay+b$ satisfying $y(0)=0.$

Given an integer $n\ge 2,$ it  will be convenient to introduce the following constant:
\begin{equation}\label{eq-sn}
  S(n)=\int_{0}^{\pi/2}\sin^{n-1}(s)ds, \,\, n\ge 2.
\end{equation}

\begin{proposition} \label{pro-limttauRSn}
Let $\tau$ be the solution \eqref{eq-tausn}. Then, the following  hold:
\begin{itemize}[parsep=1ex]
  \item [\rm i)]  $\tau'>0$ in $(0,\mathcal R).$
  \item [\rm ii)] $\displaystyle \lim_{s\rightarrow\mathcal R}\tau(s)=+\infty.$
  \item [\rm iii)] $\displaystyle \lim_{s\rightarrow\frac\pi 2}\tau(s)=H_1S(n)$ if  $r=1.$
\end{itemize}
\end{proposition}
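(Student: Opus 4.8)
The plan is to prove the three claims of Proposition \ref{pro-limttauRSn} by working directly with the explicit formula \eqref{eq-tausn}, using the same circle of ideas that proved Proposition \ref{pro-limttauR}.

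For part (i), I would \emph{not} differentiate the explicit integral formula, but instead repeat the maximum-principle argument from the proof of Proposition \ref{pro-limttauR}-(i): since $\tau$ solves $y'=ay+b$ with $\tau(0)=0$ and $\tau>0$ on $(0,\mathcal R)$, it is increasing near $0$; if it failed to be increasing on all of $(0,\mathcal R)$ it would have a first critical point $s_1$, which would be a local maximum, but differentiating $\tau'=a\tau+b$ gives $\tau''(s_1)=a'(s_1)\tau(s_1)+b'(s_1)$. Here $a(s)=-(n-r)\cot s$ has $a'(s)=(n-r)\csc^2 s\ge 0$ (and $a'>0$ when $r<n$), while $b(s)=b_r\tan^{r-1}s$ is nondecreasing on $(0,\mathcal R)$ with $b_r>0$; for $r=n$ one has $a\equiv 0$ and $\tau'=b>0$ directly. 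In all cases $\tau''(s_1)>0$, forcing $s_1$ to be a strict local minimum — a contradiction. Hence $\tau'>0$ throughout $(0,\mathcal R)$.

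For part (ii), I would estimate the explicit expression near $s=\mathcal R$. When $r>1$, $\mathcal R=\pi/2$: the factor $\sin^{-(n-r)}(s)$ stays bounded and bounded away from zero as $s\to\pi/2$, but the integrand $\sin^{n-1}(u)/\cos^{r-1}(u)$ behaves like $(\pi/2-u)^{-(r-1)}$ near $u=\pi/2$, so the integral $\int_0^s \sin^{n-1}(u)\cos^{-(r-1)}(u)\,du$ diverges as $s\to\pi/2$ (it is $\int \cos^{-(r-1)}$ up to bounded factors, and $r-1\ge 1$). Therefore $\tau(s)\to+\infty$. When $r=1$, $\mathcal R=\pi$ and $\tau(s)=\sin^{-(n-1)}(s)\,b_1\int_0^s\sin^{n-1}(u)\,du$; as $s\to\pi^-$ the denominator $\sin^{n-1}s\to 0^+$ while the numerator tends to the positive constant $b_1\int_0^\pi\sin^{n-1}(u)\,du$, so again $\tau\to+\infty$.

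For part (iii), with $r=1$ we have $b_1 = H_1$ (since $\binom{n-1}{0}=1$), so $\tau(s) = H_1\sin^{-(n-1)}(s)\int_0^s \sin^{n-1}(u)\,du$, and evaluating at $s=\pi/2$ gives $\tau(\pi/2) = H_1\int_0^{\pi/2}\sin^{n-1}(u)\,du = H_1 S(n)$ by the definition \eqref{eq-sn}; continuity of $\tau$ at $\pi/2$ (the formula is manifestly continuous there since $\sin(\pi/2)=1\ne 0$) gives the stated limit. The only mildly delicate point, and the one I would be most careful about, is the divergence rate bookkeeping in part (ii) for $r>1$ — specifically checking that the power of $\cos u$ in the integrand is exactly $r-1\ge 1$ so the integral genuinely diverges rather than merely growing; everything else is a routine combination of the ODE structure already exploited for $\hf$ and elementary estimates on trigonometric integrals.
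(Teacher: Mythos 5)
Your proposal is correct and follows essentially the same route as the paper: part (i) by the same no-interior-critical-point argument used for $\hfr$ (exploiting $a'\ge 0$, $b'\ge 0$ with the appropriate strict inequality), part (ii) by splitting into $r=1$ (where $\sin^{n-1}s\to 0$ at $s=\pi$ against a positive integral) and $r>1$ (where the integral of $\sin^{n-1}(u)\cos^{1-r}(u)$ diverges at $\pi/2$ because $r-1\ge 1$), and part (iii) by direct evaluation using $b_1=H_1$. The only cosmetic difference is that the paper bounds the divergent integral below by $\sin^{n-r}(\delta)\int_\delta^s\tan^{r-1}(u)\,du$ rather than invoking the asymptotic $(\pi/2-u)^{-(r-1)}$, but this is the same estimate in different clothing.
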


\begin{proof}
  Since the functions $a$ and $b$ in \eqref{eq-a&bsn} are both increasing (when nonconstant), the proof of
  (i) is entirely analogous to the one given in Proposition \ref{pro-limttauR}-(i).

  To prove (ii), let us first assume $r=1.$ In this case, since $\sin \pi=0$ and the integral
  $\int_{0}^{\pi}\sin^{n-1}(u)du$ is positive, we have that $\tau$ satisfies
   (ii) for $\mathcal R=\pi.$

  If $r>1$, for a fixed $\delta\in(0,\pi/2)$ and any $s\in (\delta,\pi/2),$ one has
  \[
  \int_{0}^{s}\frac{\sin^{n-1}(u)}{\cos^{r-1}(u)}du\ge
  \int_{\delta}^{s}\tan^{r-1}(u){\sin^{n-r}(u)}du\ge\sin^{n-r}(\delta)\int_{\delta}^{s}\tan^{r-1}(u)du,
  \]
  which implies that the first integral goes to infinity as $s\rightarrow\pi/2,$ since the same is
  true for the integral $\int_{\delta}^{s}\tan^{r-1}(u)du$. It follows from this fact that
  $\tau(s)\rightarrow+\infty$ as $s\rightarrow\pi/2$ if $r>1,$ which proves (ii).

  The identity in (iii) follows directly from the definitions of $\tau$ (for $r=1$)
  and $S(n)$ (as in \eqref{eq-sn}).
\end{proof}

From the above proposition, we get the following existence result for
$H_r(>0)$-hypersurfaces of $\s^n\times\R.$

\begin{theorem} \label{Hr-rotationalSnxR}
Given $r\in\{1,\dots, n\}$ and a constant $H_r>0,$ there exists an
$H_r$-sphere $\Sigma$ in $\s^n\times\R$ which is symmetric with respect to a horizontal
hyperplane. Furthermore:
\begin{itemize}[parsep=1ex]
  \item [\rm i)]  $\Sigma$ is strictly convex if  $r>1$ or $r=1$ and $H_1 > 1/S(n).$
  \item [\rm ii)] $\Sigma$ is  convex if  $r=1$ and $H_1=1/S(n).$
  \item [\rm iii)] $\Sigma$ is non convex if $r=1$ and $0<H_1 < 1/S(n).$
\end{itemize}
\end{theorem}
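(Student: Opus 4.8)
The plan is to mimic the construction in Theorem \ref{th-Hrrotational}, using the solution $\tau$ from \eqref{eq-tausn} together with the behavior established in Proposition \ref{pro-limttauRSn}. First I would fix the family $\mathscr F$ of parallel geodesic spheres in \eqref{eq-parallelspheresSn} and take $\tau$ as in \eqref{eq-tausn}, which satisfies $\tau(0)=0$ and, by Proposition \ref{pro-limttauRSn}(i)--(ii), is increasing on $(0,\mathcal R)$ with $\tau(s)\to+\infty$ as $s\to\mathcal R$. Hence there is a unique $s_0\in(0,\mathcal R)$ with $0<\tau|_{(0,s_0)}<1$ and $\tau(s_0)=1$, and moreover $\tau'(s_0)>0$. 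By Lemma \ref{lem-parallel}, the $(f_s,\phi)$-graph $\Sigma'$ with $\rho$-function $\rho=\sqrt[r]{\tau|_{[0,s_0)}}$ is a rotational $H_r$-graph over the geodesic ball $B_{s_0}(o)\subset\s^n$, with $\phi(s)=\int_0^s\rho(u)(1-\rho^2(u))^{-1/2}\,du$. Since $\rho(s_0)=1$ and $\rho'(s_0)>0$, Lemma \ref{lem-convergenceintegral}(i) shows $\phi$ extends continuously to $s_0$, so $\partial\Sigma'=S_{s_0}(o)\times\{\phi(s_0)\}$ and the tangent spaces of $\Sigma'$ are vertical along $\partial\Sigma'$ (because $\phi'\to+\infty$, hence $\Theta\to 0$, there). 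Reflecting $\Sigma'$ across $\s^n\times\{\phi(s_0)\}$ and gluing along this $(n-1)$-sphere produces a closed hypersurface $\Sigma$; as in Theorem \ref{th-Hrrotational}, the gluing is $C^\infty$ because the trajectories of $\nabla\xi$ are geodesics crossing $\partial\Sigma'$ orthogonally. Thus $\Sigma$ is a rotational $H_r$-sphere symmetric with respect to a horizontal hyperplane.

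For the convexity trichotomy I would analyze the principal curvatures via \eqref{eq-principalcurvatures}: $k_i=-\rho k_i^s$ for $1\le i\le n-1$ and $k_n=\rho'$. On the sphere $f_s$ the principal curvatures are $k_i^s=-\cot s<0$ for $s\in(0,\pi/2)$, so $k_i=\rho\cot s>0$ there, and the only possible failure of strict convexity comes from the sign of $k_n=\rho'$, equivalently the sign of $\tau'=a\tau+b$. At the apex $o$ the point is an isolated minimum of the height function, hence $\Sigma$ is strictly convex there, exactly as in Theorem \ref{th-Hrrotational}. For $r>1$ one has $b(0)=0$ but $a$ is singular at $s=0$ with $\tau\ge 0$ increasing off $0$; the key point is that $\tau'>0$ on all of $(0,s_0)$ (Proposition \ref{pro-limttauRSn}(i)), and one must also check $s_0\le\pi/2$, i.e. that $\mathcal R=\pi/2$ handles the case $r>1$ automatically, so that $\cot s\ge 0$ throughout and all $k_i>0$; this gives strict convexity in case (i) when $r>1$. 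For $r=1$ we have $a(s)=-(n-1)\cot s$ and $b\equiv H_1$, and $\tau(s)=H_1\sin^{-(n-1)}(s)\int_0^s\sin^{n-1}(u)\,du$; here $s_0$ may exceed $\pi/2$, and on $(\pi/2,s_0)$ one has $k_i=\rho\cot s<0$ while still $k_n=\rho'>0$ by part (i), so strict convexity holds iff the graph stays within $B_{\pi/2}(o)$, i.e. iff $s_0\le\pi/2$, which by Proposition \ref{pro-limttauRSn}(iii) is equivalent to $\lim_{s\to\pi/2}\tau(s)=H_1 S(n)\ge 1$, that is $H_1\ge 1/S(n)$.

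More precisely: when $H_1>1/S(n)$, $\tau$ reaches the value $1$ strictly before $s=\pi/2$, so $\Sigma'\subset B_{\pi/2}(o)\times\R$ and all principal curvatures are strictly positive, giving strict convexity (case (i), $r=1$). When $H_1=1/S(n)$, one has $s_0=\pi/2$ exactly, so $k_i=\rho\cot s\to 0$ as $s\to s_0$: the horizontal principal curvatures vanish on the equatorial sphere $S_{\pi/2}(o)\times\{\phi(\pi/2)\}$ but are nonnegative everywhere, while $k_n=\rho'>0$; hence $\Sigma$ is convex but not strictly convex (case (ii)). When $0<H_1<1/S(n)$, we have $\tau(\pi/2)=H_1S(n)<1$, so $s_0>\pi/2$ and on the nonempty interval $(\pi/2,s_0)$ the curvatures $k_i=\rho\cot s$ are \emph{negative} while $k_n=\rho'>0$ remains positive, so principal curvatures of both signs occur and $\Sigma$ is non convex (case (iii)).

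The main obstacle I expect is the careful case distinction at the equator for $r=1$: one must verify that $k_n=\rho'$ stays strictly positive all the way up to $s_0$ (this is Proposition \ref{pro-limttauRSn}(i), so it is already available), confirm that $\rho(s_0)=1$ with $\rho'(s_0)>0$ so that Lemma \ref{lem-convergenceintegral} genuinely applies to make $\phi(s_0)$ finite and the tangent planes vertical there, and track the exact sign of $\cot s$ across $s=\pi/2$ to pin down the three regimes. A minor technical point is the $C^\infty$-gluing at the top sphere, which is handled exactly as in Theorem \ref{th-Hrrotational} by invoking that the $\nabla\xi$-trajectories are geodesics meeting $\partial\Sigma'$ orthogonally; and one should note that for $r>1$ the restriction $\mathcal R=\pi/2$ from \eqref{eq-solutionodesn} automatically confines the graph to the convex ball, so no equatorial subtlety arises and strict convexity is immediate.
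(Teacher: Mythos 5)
Your proposal is correct and follows essentially the same route as the paper: the same solution $\tau$ from \eqref{eq-tausn}, Proposition \ref{pro-limttauRSn} to locate $s_0$ and reduce the construction to that of Theorem \ref{th-Hrrotational}-(i), and the sign of $\cot s$ at $s_0$ relative to $\pi/2$ (via $\lim_{s\to\pi/2}\tau=H_1S(n)$) to settle the convexity trichotomy. The only cosmetic slip — asserting strict convexity ``iff $s_0\le\pi/2$'' — is corrected in your own case analysis, where $s_0=\pi/2$ is properly assigned to the convex-but-not-strictly-convex case.
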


\begin{proof}
Let $\mathscr F$ be an arbitrary  family of parallel geodesic spheres of
$\s^n$ as given in \eqref{eq-parallelspheresSn}. Consider the functions
$a$ and $b$ defined in \eqref{eq-a&bsn} and let
$\tau$ be the solution \eqref{eq-tausn} of the ODE $y'=ay+b.$

From Proposition \ref{pro-limttauRSn}-(ii),
there exists a positive $s_0<\mathcal R$ such that
\[
0=\tau(0)<\tau|_{(0,s_0)}<1=\tau(s_0),
\]
so that $\tau|_{[0,s_0)}$ determines an $(f_s,\phi)$-graph
$\Sigma'$ over $B_{s_0}(0)\subset\s^n.$
Since $\tau(s_0)=1$  and $\tau'(s_0)>0$ (by Proposition \ref{pro-limttauRSn}-(i)),
we can proceed just as in the proof of Theorem \ref{th-Hrrotational}-(i) to
obtain from $\Sigma'$ the embedded
$H_r$-sphere $\Sigma$ of $\s^n\times\R$  which is symmetric with respect to
$P_{\phi(s_0)}:=\s^n\times\{\phi(s_0)\}.$

If  $r>1$ or $r=1$ and $H_1 > 1/S(n),$ we have from  Proposition \ref{pro-limttauRSn}, items (ii) and (iii),
that $0<s_0<\pi/2.$ Hence,  for $s\in(0, s_0),$ all spheres $f_s$ have negative
principal curvatures, which,  together with equalities
\eqref{eq-principalcurvatures}, gives that $\Sigma$ is strictly convex. This proves (i).

If $r=1$ and $H_1=1/S(n),$ Proposition \ref{pro-limttauRSn}-(iii) yields
$s_0=\pi/2.$ However, $f_{\pi/2}$ is totally geodesic in $\s^n,$ which implies that,
except for $k_n=H_1>0,$  the principal curvatures of $\Sigma$ vanish at all points of the horizontal
section $\Sigma_{\phi(\pi/2)}=\Sigma\cap P_{\phi(\pi/2)}.$
Therefore, $\Sigma$ is  convex on $\Sigma_{\phi(\pi/2)}$ and
strictly convex on $\Sigma -\Sigma_{\phi(\pi/2)}$\,.

Finally, assuming $r=1$ and $0<H_1<1/S(n),$ we have from
Proposition \ref{pro-limttauRSn}-(iii) that $s_0>\pi/2.$
Observing that, for  $\pi/2<s<\pi,$ $f_s$ has positive principal curvatures,
we conclude, as in the last paragraph,  that
$\Sigma$ is strictly convex (resp. convex, non convex) on
$\Sigma_{\phi(s)}$ if $s<\pi/2$ (resp. $s=\pi/2,$ $s>\pi/2$).
In particular, $\Sigma$  is non convex. This shows (iii) and concludes our
proof.
\end{proof}

\begin{remark}
Except for the assumptions on the
convexity of $\Sigma,$ the case $r=1$ of Theorem \ref{Hr-rotationalSnxR} was
proved in \cite{pedrosa}. The case $r=n=2$ was considered in \cite{cheng-rosenberg}.
It should also be mentioned that, for $n=2$ and $r=1,$ the non convexity of $\Sigma$ as stated in (iii)
was pointed out in \cite[Remark 2.8]{abresch-rosenberg}.
\end{remark}

In our next theorem we show the existence of one-parameter families
of rotational Delaunay-type $H_r(>0)$-annuli in $\s^n\times\R.$ This result, then,
generalizes the analogous one obtained in \cite{pedrosa-ritore} for $r=1.$

First, let us introduce the  constant
\[
C_r:=\frac{n-r}{n}{{n}\choose{r}}
\]
and observe that, for $1\le r<n,$  the positive constants $H_r$\,, $b_r$ (as in \eqref{eq-a&bsn}),
and $C_r$ satisfy the following relation:
\begin{equation} \label{eq-relation1}
\frac{n-r}{b_r}=\frac{C_r}{H_r}\,\cdot
\end{equation}

In this setting, if we define
\begin{equation} \label{eq-sr1}
\delta_{H_r}:={\rm arctan}\,(C_r/H_r)^{1/r}\,\in (0,\pi/2),
\end{equation}
then a solution $\tau$ of $y'=ay+b$ such that
$\tau(s_0)=1$, $s_0\in(0,\pi/2),$  satisfies:
\begin{equation} \label{eq-sr2}
\tau'(s_0)<0 \quad\Leftrightarrow\quad 0<s_0<\delta_{H_r}.
\end{equation}

\begin{theorem}  \label{th-delaunaytypesn}
Given $n\ge 2,$ $r\in\{1,\dots ,n-1\}$, and $H_r > 0,$
there exists a one-parameter family \,$\mathscr S=\{\Sigma(\lambda)\,;\,0<\lambda<\delta_{H_r}\}$
of  properly embedded Delaunay-type rotational $H_r$-hypersurfaces
in $\s^n\times\R$. % that is, each $\Sigma(\lambda)$ is  periodic, homeomorphic to $\s^{n-1}\times\R$,
%and has unduloids as the trajectories of the gradient of its  height function.
\end{theorem}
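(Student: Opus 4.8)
The plan is to mimic closely the construction carried out for $\hf\times\R$ in Theorems \ref{th-Hrrotational} and \ref{th-Hrannuli}, now using the family $\mathscr F$ of parallel geodesic spheres of $\s^n$ from \eqref{eq-parallelspheresSn} together with the explicit solution $\tau$ of $y'=ay+b$ given in \eqref{eq-solutionodesn}. Fix $\lambda\in(0,\delta_{H_r})$ with $\delta_{H_r}$ as in \eqref{eq-sr1}, and let $\tau=\tau_\lambda$ be the solution of $y'=ay+b$ (with $a,b$ as in \eqref{eq-a&bsn}) satisfying the initial condition $\tau(\lambda)=1$. By \eqref{eq-sr2} one has $\tau'(\lambda)<0$, so $\tau$ is decreasing just after $\lambda$ and $0<\tau<1$ on an interval to the right of $\lambda$. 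The key qualitative fact, exactly as in the proof of Theorem \ref{th-Hrannuli}, is that any interior critical point $s_1$ of $\tau$ must be a strict local minimum, since $\tau''(s_1)=a'(s_1)\tau(s_1)+b'(s_1)>0$ (both $a'$ and $b'$ are $\ge 0$, with $b'>0$ for $r>1$ and $a'>0$ always, because $a,b$ in \eqref{eq-a&bsn} are increasing on $(0,\pi/2)$ when nonconstant); here one must also handle the edge behaviour near $s=\pi/2$, where $a$ and $b$ blow up, to guarantee $\tau$ comes back up to $1$ at some $\bar\lambda\in(\lambda,\pi/2)$ rather than escaping. Using Proposition \ref{pro-limttauRSn}-(ii), $\tau\to+\infty$ as $s\to\mathcal R$, so by the intermediate value theorem there is indeed a first $\bar\lambda\in(\lambda,\pi/2)$ with $\tau(\bar\lambda)=1$, and by the single-minimum property $\tau'(\bar\lambda)>0$ and $0<\tau|_{(\lambda,\bar\lambda)}<1$.

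Next I would invoke Lemma \ref{lem-parallel} to conclude that the $(f_s,\phi)$-graph $\Sigma'(\lambda)$ with $\rho$-function $\rho=\sqrt[r]{\tau|_{(\lambda,\bar\lambda)}}$ is an $H_r$-hypersurface of $\s^n\times\R$, and Lemma \ref{lem-convergenceintegral} (applied at both endpoints, using $\rho(\lambda)=\rho(\bar\lambda)=1$ with $\rho'(\lambda)<0$ and $\rho'(\bar\lambda)>0$) to see that the defining integral $\phi(s)=\int_\lambda^s \rho(u)(1-\rho^2(u))^{-1/2}\,du$ converges at both ends, so $\Sigma'(\lambda)$ is a compact ``block'' homeomorphic to $\s^{n-1}\times[\lambda,\bar\lambda]$ with boundary $(S_\lambda(o)\times\{0\})\cup(S_{\bar\lambda}(o)\times\{\phi(\bar\lambda)\})$. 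Since $\phi'(s)\to+\infty$ at both endpoints, the tangent spaces of $\Sigma'(\lambda)$ are vertical along $\partial\Sigma'(\lambda)$, exactly as in Theorem \ref{th-Hrannuli}-(i). Then, reflecting $\Sigma'(\lambda)$ successively across the horizontal hyperplanes $\s^n\times\{k\phi(\bar\lambda)\}$, $k\in\Z$, and using that the trajectories of $\nabla\xi$ are geodesics (hence the gluing is $C^\infty$), I obtain a complete, properly embedded, rotational $H_r$-hypersurface $\Sigma(\lambda)$ which is periodic in the vertical direction and homeomorphic to $\s^{n-1}\times\R$ — a Delaunay-type annulus whose height-function gradient trajectories are unduloids. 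Letting $\lambda$ range over $(0,\delta_{H_r})$ gives the one-parameter family $\mathscr S$.

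Finally I would check that the members of $\mathscr S$ are genuinely distinct and properly embedded: distinctness follows because the neck radius $\lambda$ (equivalently, the minimum of the horizontal section radii, attained where $\tau$ is smallest) is a geometric invariant that varies with the parameter, and properness/embeddedness follows from the vertical-tangency condition along each reflecting hyperplane together with monotonicity of $\phi$ on each block, so that distinct blocks stack without self-intersection. The main obstacle I anticipate is the endpoint analysis near $s=\pi/2$ when $r>1$: there $a(s)=-(n-r)\cot s\to 0$ but $b(s)=b_r\tan^{r-1}s\to+\infty$, so one must argue carefully — using the explicit integral formula \eqref{eq-solutionodesn} and the estimate in the proof of Proposition \ref{pro-limttauRSn}-(ii) — that $\tau$ is forced back up through the value $1$ \emph{before} reaching $\pi/2$, i.e. that $\bar\lambda<\pi/2$, so that the block $\Sigma'(\lambda)$ is compact and the reflection procedure actually closes up periodically. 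Once this is secured, the rest is a direct transcription of the $\hf\times\R$ argument. If the intended statement also asserts a symmetric ``hourglass''-type alternative for small $H_r$ (analogous to Theorem \ref{th-Hrannuli}-(ii)), that case does not arise here because $\tau\to+\infty$ unconditionally by Proposition \ref{pro-limttauRSn}-(ii), so every block is compact and only the Delaunay-type family occurs.
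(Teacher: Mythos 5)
Your proposal is correct and follows essentially the same route as the paper: take the solution of $y'=ay+b$ with $\tau(\lambda)=1$ and $\tau'(\lambda)<0$ for $\lambda\in(0,\delta_{H_r})$, show $\tau\to+\infty$ as $s\to\mathcal R$ so that a first $\bar\lambda$ with $\tau(\bar\lambda)=1$ and $\tau'(\bar\lambda)>0$ exists, and then repeat the block-and-reflect construction of Theorem \ref{th-Hrannuli}-(i); the paper likewise reduces the endpoint analysis to the estimate $\mu>1$ applied to the general solution \eqref{eq-solutionodesn} rather than literally to Proposition \ref{pro-limttauRSn}-(ii), exactly as you anticipate in your final paragraph. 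The only slip is that for $r=1$ one has $\mathcal R=\pi$, so $\bar\lambda$ need only lie in $(\lambda,\pi)$ (and can exceed $\pi/2$ for small $H_1$); this does not affect the construction.
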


\begin{proof}
  Given $\lambda\in (0,\delta_{H_r}),$
  consider the solution $\tau$
  as in \eqref{eq-solutionodesn} such that $s_0=\lambda$ and $\tau_0=\tau(\lambda)=1.$
  From \eqref{eq-sr2}, we have  that $\tau$ is decreasing
  in a neighborhood of $\lambda.$

  Observe that $\tau$ is positive in $(0,\mathcal R).$ Also, setting
  \[
  \mu(s)=\left(\frac{\sin s}{\sin\lambda}\right)^{n-r}\,,
  \]
  we have that $\mu>1$ on $(\lambda,\pi/2).$ So, for $r>1$  and $s>\lambda,$
  \[
  \tau(s)>\frac{1}{\mu(s)}\left(1+\int_{\lambda}^{s}b(u)du\right)=\frac{1}{\mu(s)}\left(1+b_r\int_{\lambda}^{s}\tan^{r-1}(u)du\right),
  \]
  which implies that  $\tau(s)\rightarrow +\infty$ as $s\rightarrow\pi/2.$

  If $r=1,$ since $\sin\pi=0$ and $\int_{\lambda}^{\pi}\mu(s)ds$ is positive, we have that
  $\tau(s)\rightarrow+\infty$ as $s\rightarrow\pi.$

  It follows from the above considerations that  there exists
  $\overbar\lambda\in (0,\mathcal R)$ such that
\begin{equation} \label{eq-tau'delaunay}
\tau(\lambda)=\tau(\bar\lambda)=1 \,\,\, \text{and} \,\,\, \tau'(\lambda)<0<\tau'(\bar\lambda).
\end{equation}
From this point on, the proof is entirely analogous to that of Theorem \ref{th-Hrannuli}-(i).
\end{proof}

A classification result for rotational $H_r$-hypersurfaces of $\s^n\times\R$ can be achieved
in the same way we did for their congeners in $\hfr.$ To see this, assume
that $\Sigma$ is a complete connected
rotational $H_r(>0)$-hypersurface of $\s^n\times\R$
whose height function is Morse-type. Assuming that $\Sigma$ is non cylindrical,
we have, as before, that there exists an open set $\Sigma'\subset\Sigma$  which is an
$(f_s,\phi)$-graph, $f_s\in\mathscr F.$
The corresponding $\tau$-function, restricted to a maximal interval
$(s_0,s_1),$ satisfies:
\[
0<\tau|_{(s_0\,,s_1)}<1, \,\,\, 0\le s_0<s_1\le\mathcal R,
\]
which yields $\tau(s_0)=0$ or $\tau(s_0)=1.$

If $\tau(s_0)=0,$ then $s_0=0.$ (Otherwise, the height function of $\Sigma$ would not
be Morse-type.) In this case, $\tau$ coincides with the $\tau$-function of the $H_r$-sphere
of Theorem \ref{Hr-rotationalSnxR}, and then $\Sigma$ itself coincides with this sphere.
(Notice that any of the spheres obtained in Theorem \ref{Hr-rotationalSnxR} is strictly
convex on an open set.)

If $\tau(s_0)=1,$ then $\tau$ is decreasing in a neighborhood of $s_0$\,. Thus,
$r<n.$ In particular, $|a(s)|\rightarrow +\infty$ as $s\rightarrow 0,$ so that
$s_0\in(0,\delta_{H_r}).$ Analogously, this gives that $\Sigma$ coincides with
the $H_r$-annulus $\Sigma(\lambda)$ of Theorem \ref{th-delaunaytypesn}, $\lambda=s_0.$

Summarizing, we have the following result.

\begin{theorem}\label{th-classificationHr>0SnxR}
  Let $\Sigma$ be a  connected complete  rotational $H_r(>0)$-hyper\-surface of
  $\s^n\times\R$ whose height function is Morse-type. Then, $\Sigma$ is either
  a cylinder over a strictly convex geodesic sphere of \,$\s^n$ or one of the embedded $H_r$-hypersurfaces of
  Theorems \ref{Hr-rotationalSnxR}--\ref{th-delaunaytypesn}.
\end{theorem}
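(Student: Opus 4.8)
The plan is to mirror almost verbatim the argument that established Theorem \ref{th-classificationHr>0} for $\hf\times\R$, since the structure of the two proofs is identical and the only genuine inputs are the local graph representation, the ODE from Lemma \ref{lem-parallel}, the Maximum-Continuation Principle, and the monotonicity/boundary-value facts recorded in Propositions \ref{pro-limttauRSn} and in \eqref{eq-sr2}. First I would assume $\Sigma$ is not a cylinder over a geodesic sphere, so that the open set $\Sigma_0\subset\Sigma$ on which $\Theta\nabla\xi$ is nonvanishing is nonempty. Fixing $x_0\in\Sigma_0$, there is a neighborhood $\Sigma'$ of $x_0$ which is a vertical graph over an open set $\Omega\subset\s^n$; since $\Sigma$ is rotational and $\Sigma_0$ has no horizontal points, after a possible reflection in a horizontal hyperplane $\Sigma'$ is an $(f_s,\phi)$-graph over $\Omega$, with $f_s\in\mathscr F$ a family of concentric geodesic spheres of $\s^n$ as in \eqref{eq-parallelspheresSn}.

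Next, by Lemma \ref{lem-parallel} the function $\tau=\rho^r$ attached to $\Sigma'$ solves $y'=ay+b$ with $a,b$ as in \eqref{eq-a&bsn}, and completeness of $\Sigma$ furnishes a maximal interval $(s_0,s_1)$ with $0\le s_0<s_1\le\mathcal R$ on which $0<\tau<1$; continuity forces $\tau(s_0)\in\{0,1\}$. I would then split into these two cases exactly as before. If $\tau(s_0)=0$, a vertical translation normalizes $\phi(s_0)=\phi'(s_0)=0$; were $s_0>0$ this would put the whole geodesic sphere $S_{s_0}(o)\times\{0\}$ into $\partial\Sigma'$ with $\nabla\xi$ vanishing along it, contradicting the Morse-type hypothesis, so $s_0=0$. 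By uniqueness of solutions of the linear ODE through the initial condition $\tau(0)=0$, this $\tau$ equals the one in \eqref{eq-tausn}, hence the associated $\phi$ agrees with that of the $H_r$-sphere of Theorem \ref{Hr-rotationalSnxR}; since that sphere is strictly convex on an open set, the Maximum-Continuation Principle forces $\Sigma$ to be exactly that sphere.

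If instead $\tau(s_0)=1$, then $\tau$ is decreasing near $s_0$, which is impossible when $r=n$ (there $\tau'=b>0$), so $r<n$; from \eqref{eq-|a|}-type behavior $|a(s)|\to+\infty$ as $s\to 0$, so any solution reaching $s=0$ satisfies $\tau(0)=0$, forcing $s_0>0$, and decreasing-ness at $s_0$ together with \eqref{eq-sr2} gives $s_0\in(0,\delta_{H_r})$. Setting $\lambda=s_0$ and using that the Delaunay-type hypersurface of Theorem \ref{th-delaunaytypesn} is strictly convex at some point, I would again invoke uniqueness of the ODE solution through $\tau(\lambda)=1$ and the Maximum-Continuation Principle to identify $\Sigma$ with $\Sigma(\lambda)$. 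I expect the only delicate point — as in the $\hf\times\R$ case — to be verifying that the Morse-type condition genuinely rules out $s_0>0$ in the $\tau(s_0)=0$ branch and, symmetrically, that in the $\tau(s_0)=1$ branch the hypersurface cannot ``stall'' at an interior critical point of $\tau$; both follow from the observation that a critical point $s_1$ of $\tau$ must be a local minimum since $\tau''(s_1)=a'(s_1)\tau(s_1)+b'(s_1)>0$, so the graph cannot terminate there, and the matching across reflected pieces is $C^\infty$ because the $\nabla\xi$-trajectories are geodesics. Everything else is a routine transcription of the $\hf\times\R$ argument with $C_{\mathbb F}(r)$ replaced by the relevant thresholds $1/S(n)$ and $C_r$.
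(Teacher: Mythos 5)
Your proposal is correct and follows essentially the same route as the paper, which itself proves this theorem by transcribing the $\hf\times\R$ argument of Theorem \ref{th-classificationHr>0} with the dichotomy $\tau(s_0)\in\{0,1\}$, the Morse-type condition ruling out $s_0>0$ in the first branch, the blow-up of $|a|$ at $s=0$ forcing $s_0\in(0,\delta_{H_r})$ in the second, and the Maximum-Continuation Principle (applicable because the model hypersurfaces are strictly convex on an open set) completing the identification. The only substantive checks you flag — that interior critical points of $\tau$ are local minima because $a'\ge 0$, $b'\ge 0$ with $a'\tau+b'>0$ — are exactly the ones the paper relies on via Proposition \ref{pro-limttauRSn}.
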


\begin{remark}
Regarding the hypothesis on the height function of $\Sigma$ in Theorem \ref{th-classificationHr>0SnxR},
we point out that  a rotational embedded
$H_1(>0)$-torus in $\s^n\times\R$ whose height
function is non Morse-type was obtained in \cite{pedrosa}.
\end{remark}

\section{Rotational $r$-minimal Hypersurfaces of $\hfr$ and $\s^n\times\R.$} \label{sec-rotationalr-minimal}

In this section, we shall see that the method of $(f_s,\phi)$-graphs can be used
for construction  and classification of rotational $r$-minimal hypersurfaces of \,$\hfr$ and $\s^n\times\R.$
A major distinction from the case of $H_r(>0)$-hypersurfaces  is that the Maximum-Continuation
Principle is no longer available. % for establishing  uniqueness results.

\begin{theorem} \label{th-r-minimalrotationalHxR}
Given $r\in\{1,\dots ,n\},$ there exists a one-parameter family
\[\mathscr S=\{\Sigma(\lambda)\,;\, \lambda >0\}\] of
complete rotational $r$-minimal $n$-annuli in \,$\hfr$ with the following
properties:
\begin{itemize}[parsep=1ex]
  \item [\rm i)] If $r=n,$  $\Sigma(\lambda)$ is a cylinder over a geodesic sphere
  of \,$\hf$ of radius $\lambda>0.$
  \item [\rm ii)] If $r<n$, $\Sigma(\lambda)$ is catenoid-type. More precisely,  it is symmetric
  with respect to $P_0=\hf\times\{0\},$ and $P_0\cap\Sigma(\lambda)$ is the geodesic sphere
  of \,$\hf$ of radius $\lambda$  centered at the point $o\in\hf$  of the axis.
  In addition, each of the parts of $\Sigma(\lambda)$ above and below $P_0$ is a rotational graph
  over $\hf-B_\lambda(o)$ (Fig. \ref{fig-cat}).
\end{itemize}
Furthermore, up to ambient isometries, any complete connected
rotational $r$-minimal hypersurface of
\,$\hfr$ is either a member of $\mathscr S$ or a horizontal hyperplane.
\end{theorem}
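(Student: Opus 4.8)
The plan is to apply Lemma \ref{lem-parallel} with $H_r=0$ to the isoparametric family $\mathscr F$ of concentric geodesic spheres of $\hf$ described in \eqref{eq-Fgeodesicspheres}. With $H_r=0$ the coefficient $b$ in \eqref{eq-a&b} vanishes identically, so the ODE $y'=a(s)y$ is homogeneous and its solutions are the exponentials $\tau(s)=\tau_0\mu(s)^{-1}$ with $\mu(s)=\exp(-\int_{s_0}^s a(u)\,du)$, where $a(s)=-r|H_r^s|/|H_{r-1}^s|<0$ for $r<n$ and $a\equiv 0$ for $r=n$. First I would treat the case $r=n$ separately: here $a\equiv 0$ forces $\tau$ to be constant, and a constant $\rho$-function means $\phi$ is affine by \eqref{eq-phi}; the only complete rotational example with $H_n\equiv 0$ turns out to be a cylinder over a geodesic sphere (where $\partial_t$ is a principal direction with zero principal curvature, giving $H_n=0$ by the cylinder discussion preceding Theorem \ref{th-classificationHr>0}), together with the horizontal hyperplanes (the degenerate solution $\rho\equiv 0$). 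This gives (i).

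For $r<n$, fix $\lambda>0$ and take the solution $\tau$ with $\tau(\lambda)=1$; since $\tau'=a\tau$ and $a<0$, $\tau$ is strictly decreasing through $1$ at $s=\lambda$, so $0<\tau<1$ on $(\lambda,+\infty)$ and $\tau>1$ on $(0,\lambda)$. Thus the admissible domain is $s\in(\lambda,+\infty)$, on which $\rho=\tau^{1/r}\in(0,1)$, and Lemma \ref{lem-parallel} produces an $(f_s,\phi)$-graph $\Sigma'(\lambda)$ that is $r$-minimal, defined over $\hf-\overline{B_\lambda(o)}$, with $\rho(\lambda)=1$ and $\rho'(\lambda)<0$. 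By Lemma \ref{lem-convergenceintegral}(ii) the function $\phi$ extends continuously to $s=\lambda$ with $\phi'\to-\infty$ there by \eqref{eq-phi}, hence (via \eqref{eq-thetaparallel}) the tangent spaces of $\Sigma'(\lambda)$ are vertical along $\partial\Sigma'(\lambda)=S_\lambda(o)\times\{\phi(\lambda)\}$; I would then reflect across the horizontal hyperplane $\hf\times\{\phi(\lambda)\}$ and, after a vertical translation placing this hyperplane at height $0$, obtain the complete $C^\infty$ catenoid-type annulus $\Sigma(\lambda)$ (the $C^\infty$ gluing across the vertical collar is justified exactly as in Theorem \ref{th-Hrrotational}, using that the trajectories of $\nabla\xi$ are geodesics). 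As $s\to+\infty$, since $\rho\to 0$ (because $|a|$ is bounded below away from $0$ as shown in Proposition \ref{pro-limttauR}, forcing $\tau\to 0$), one checks that $\phi(s)$ stays bounded or grows sublinearly — in any case each half is a genuine graph over $\hf-B_\lambda(o)$ — which gives (ii).

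For the classification, I would argue as in the proof of Theorem \ref{th-classificationHr>0}: let $\Sigma$ be a complete connected rotational $r$-minimal hypersurface. If it is a cylinder, then by the cylinder discussion it is a cylinder over an $r$-minimal hypersurface of $\hf$; one checks geodesic spheres are never $r$-minimal for $r<n$ (their $H_r^s$ never vanishes), so this case only occurs for $r=n$ and gives the examples in (i). If $\Sigma$ is not a cylinder, the open set $\Sigma_0$ where $\Theta\nabla\xi\neq 0$ is nonempty, and near a point of $\Sigma_0$ a connected component of $\Sigma$ is (up to a reflection) an $(f_s,\phi)$-graph whose $\tau$-function solves $y'=ay$ on a maximal interval $(s_0,s_1)$ with $0<\tau<1$; since $b\equiv 0$ the endpoint values are $\tau(s_0)\in\{0,1\}$ and $\tau(s_1)\in\{0,1\}$. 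Because $a<0$, $\tau$ is monotone decreasing, so the only consistent configuration with $0<\tau<1$ in between is $\tau(s_0)=1$, $\tau\downarrow 0$ as $s\to s_1=+\infty$; the value $s_0=0$ is excluded (there $|a|\to\infty$ and $\tau(0)=0$ would be forced, contradicting $\tau(s_0)=1$), so $s_0=\lambda>0$ and $\tau$ is exactly the solution used above. Analyticity of $\tau$ (it is an exponential) and unique continuation for the graph equation then force $\Sigma$ to coincide with $\Sigma(\lambda)$; the remaining possibility, $\tau\equiv 0$, corresponds to $\rho\equiv 0$, i.e.\ a horizontal hyperplane. The main obstacle I anticipate is the absence of the Maximum-Continuation Principle in the $r$-minimal setting, flagged in the section preamble: the continuation step must instead rest on real-analyticity of the solutions of the (now homogeneous, hence explicitly solvable) ODE together with the rotational symmetry — essentially, two rotational $r$-minimal graphs sharing an open piece have the same $\tau$-function by ODE uniqueness, and then the same $\phi$-function, so they agree wherever both are defined, and completeness propagates this to all of $\Sigma$.
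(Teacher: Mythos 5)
Your construction half follows the paper's proof essentially step for step: the same homogeneous ODE $y'=ay$ coming from Lemma \ref{lem-parallel} with $H_r=0$, the same solution $\tau_\lambda$ normalized by $\tau_\lambda(\lambda)=1$ and decreasing on $[\lambda,+\infty)$, the same appeal to Lemma \ref{lem-convergenceintegral} at $s=\lambda$, and the same reflection about a horizontal hyperplane to produce the catenoid-type annulus. (One small slip: $\phi'=\rho/\sqrt{1-\rho^2}\to+\infty$, not $-\infty$, as $s\to\lambda^{+}$; the conclusion that the tangent spaces become vertical along the boundary is unaffected.)

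The classification half, however, has two genuine gaps. First, for $r=n$ you assert that the only complete rotational $n$-minimal hypersurfaces are cylinders and horizontal hyperplanes, but you never rule out the non-cylindrical candidate your own analysis produces: $\tau$, hence $\rho$, is a positive constant, the maximal interval is all of $(0,+\infty)$, and $\phi(s)=cs$ with $c>0$. The paper excludes this by noting that $\phi'(0)=c>0$ forces the closure of the graph to meet the rotation axis non-orthogonally, i.e.\ the resulting rotational hypersurface has a cone point at the axis and cannot be smooth and complete; this step is absent from your sketch. Second, and more seriously, your continuation argument for $r<n$ (``completeness propagates this to all of $\Sigma$'') does not address the actual difficulty you yourself flag. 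ODE uniqueness only shows that each connected component of $\Sigma_0=\{\theta\nabla\xi\ne0\}$ lies in some $\Sigma(\lambda)$; a priori different components could correspond to different values of $\lambda$, or $\Sigma$ could contain open vertical or horizontal pieces glued to half-catenoids along the locus where $\theta\nabla\xi=0$ --- precisely where $\Sigma$ fails to be an $(f_s,\phi)$-graph and ODE uniqueness says nothing. The paper closes this with two further observations: (a) $\Sigma-\Sigma_0$ has empty interior, since a nonempty open subset of it would have to be vertical (catenoids have no horizontal points), hence part of a rotational cylinder, which is $r$-minimal only for $r=n$; and (b) two distinct members of $\mathscr S$ always intersect transversally, so a connected smooth $\Sigma$ with $\Sigma_0$ dense must lie in a single $\Sigma(\lambda)$. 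Without (a) and (b), hybrid configurations are not excluded and the uniqueness statement is not proved.
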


\begin{proof}
Given $\lambda>0,$ is immediate that a cylinder over a geodesic sphere of $\hf$ of radius $\lambda$ is
an $n$-minimal rotational annulus of $\hfr,$ which yields (i).

Assume that $1\le r<n$ and let
$\mathscr F=\{f_s\,;\, s\in (0,+\infty)\}$ be the parallel  family of
geodesic spheres of $\hf$ centered at the axis point $o\in\hf.$
The ODE determined by $\mathscr F$ and $H_r=0$ is given by
\begin{equation}  \label{eq-odeminimal}
y'=ay, \quad a(s)=-\frac{r|H_r^s|}{|H_r^{s-1}|}\,, \,\,\, s\in (0,+\infty).
\end{equation}

Since $a<0,$ given $\lambda>0,$ the function
\[
\tau_\lambda(s)=\exp\left(\int_{\lambda}^{s}a(u)du\right), \,\,\, s\in [\lambda,+\infty),
\]
is clearly a solution of \eqref{eq-odeminimal}  which satisfies
\[
0<\tau_\lambda(s)\le\tau_\lambda(\lambda)=1 \,\,\, \forall s\in [\lambda,+\infty).
\]
In addition, %for all $s\in[\lambda,+\infty),$  $\tau_\lambda'(s)=a(s)\tau_\lambda(s)<0.$
$\tau_\lambda'(\lambda)=a(\lambda)<0.$ So, setting
$\rho_\lambda=\tau_\lambda^{1/r},$ it follows from Lemma \ref{lem-convergenceintegral} that
%the function $\phi_\lambda:[\lambda,+\infty)\rightarrow\R$ given by
\[
\phi_\lambda(s):=\int_{\lambda}^{s}\frac{\rho_\lambda(u)}{\sqrt{1-\rho_\lambda^2(u)}}du, \,\,\, s\in [\lambda,+\infty),
\]
is well defined. Therefore, by Lemma \ref{lem-parallel}, the
$(f_s,\phi_\lambda)$-graph $\Sigma'(\lambda)$  is an $r$-minimal hypersurface
of $\hf\times\R.$ Notice that $\Sigma'(\lambda)$ is a graph over $\hf-\overbar{B_\lambda(o)}$
with boundary $\partial\Sigma'(\lambda)=S_\lambda(o)$  (Fig. \ref{fig-cat}).
\begin{figure}[htbp]
\includegraphics{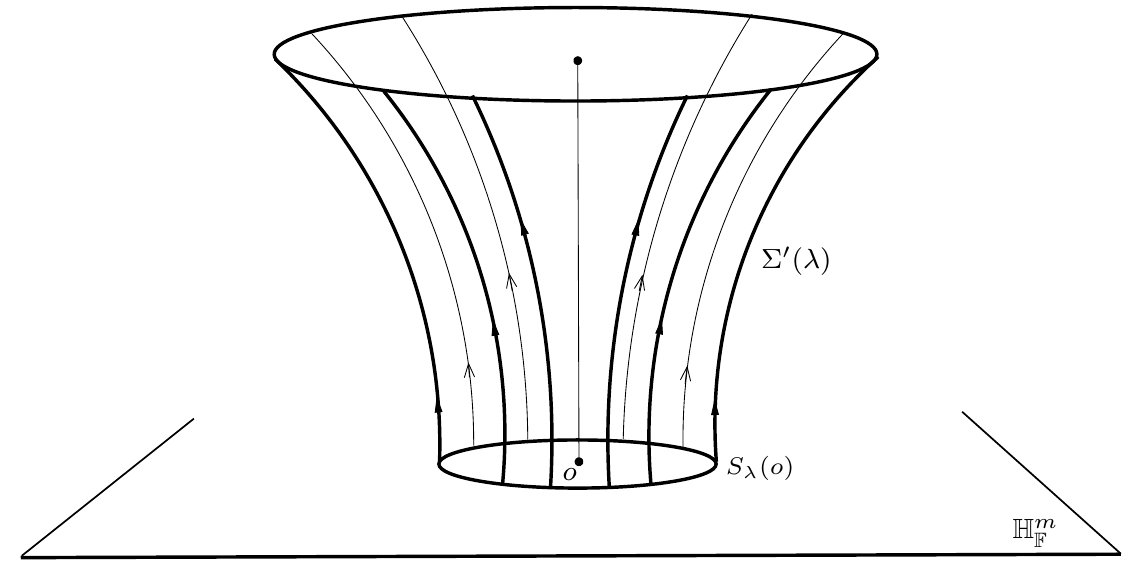}
\caption{\small The half $r$-minimal catenoid $\Sigma'(\lambda),$ on which all the
trajectories of $\nabla\xi$ emanate from $\partial\Sigma'(\lambda)$ orthogonally.}
\label{fig-cat}
\end{figure}
Also, since  $\rho_\lambda(\lambda)=1,$
the tangent spaces of $\Sigma'(\lambda)$ along $\partial\Sigma'(\lambda)$ are all vertical. Thus, considering
the reflection $\Sigma''(\lambda)$ of $\Sigma'(\lambda)$ with respect
to $\hf\times\{0\},$ as before, we have that
$\Sigma(\lambda):={\rm closure}\,(\Sigma'(\lambda))\cup {\rm closure}\,(\Sigma''(\lambda))$
is the desired $r$-minimal hypersurface.

Suppose now that $\Sigma$ is a complete connected rotational
$r$-minimal hypersurface of $\hfr$, $r\in\{1,\dots, n\},$ and set
\[
\Sigma_0:=\{x\in\Sigma\,;\, \theta(x)\nabla\xi(x)\ne 0\}.
\]
Notice that $\Sigma$ is either a horizontal hyperplane or a cylinder if and only
if $\Sigma_0=\emptyset.$ So, we can assume $\Sigma_0\ne\emptyset.$
We can also assume, without loss of generality, that $\Sigma$ and all members of
$\mathscr S$ share the same axis $\{o\}\times\R.$

As we argued in previous proofs, under the above hypotheses, there exists
an $(f_s,\phi)$-graph $\Sigma'\subset\Sigma_0$ and a maximal open interval $(s_0,s_1),$
$0\le s_0<s_1\le+\infty,$ such that the $\tau$-function of $\Sigma'$ satisfies
$0<\tau|_{(s_0,s_1)}<1.$

For $r=n,$ we have that $\tau,$ and so $\rho$, is constant. Hence,
up to a vertical translation,  one has $\phi(s)=cs, \,\, s>0,$ for some constant $c>0.$
However, $\phi'(0)=c>0,$ which implies that the closure of $\Sigma'$ in $\Sigma$
meets the rotation axis non orthogonally, i.e., $\Sigma$ is not smooth at $\partial\Sigma'$ --- a contradiction.
So, $\Sigma_0=\emptyset$ if $r=n.$

For $r<n,$ we have that
$\tau$ is a solution of \eqref{eq-odeminimal}.  In particular, $\tau$ is decreasing,
which implies that  $\tau(s_0)=1.$ As before, this yields
$s_0>0.$ Thus, setting $s_0=\lambda,$ we have $\tau=\tau_\lambda$, which implies that,
up to a vertical translation, $\phi=\phi_\lambda$ and, then, $\Sigma'$ coincides with the
half-catenoid $\Sigma'(\lambda).$

We conclude from the above that  $\Sigma_0$ is the union of open
half-catenoids $\Sigma'(\lambda)$, where $\Sigma(\lambda)\in\mathscr S.$
In addition, $\Sigma_1:=\Sigma-\Sigma_0$ has empty interior in $\Sigma.$ Otherwise,
there would exist a nonempty  open set $U\subset\Sigma_1$ of $\Sigma$ which would be either
horizontal or vertical, and whose boundary in $\Sigma$ would be contained in some
half-catenoid. Since catenoids have no horizontal points, $U$ should be vertical and, so, part
of a vertical rotational cylinder. However, rotational cylinders in $\hfr$ are $r$-minimal if and only if
$r=n.$ Therefore, $\Sigma_1$ has  empty interior, which implies that $\Sigma_0$ is open and dense in $\Sigma.$
Clearly, the intersection of two distinct members of $\mathscr S$ is always transversal. This, together with
the connectedness of $\Sigma$ and the density of $\Sigma_0$\, in $\Sigma,$
gives that  $\Sigma$ coincides with some $\Sigma(\lambda)\in\mathscr S$, which concludes
our proof.
\end{proof}

\begin{theorem}  \label{th-rminimaldelaunaytypesn}
Given $r\in\{1,\dots ,n\},$ there exists a one-parameter family
\[\mathscr S=\{\Sigma(\lambda)\,;\,0<\lambda<\mathcal R_0\}\] of
complete rotational $r$-minimal $n$-annuli in \,$\s^n\times\R$ with the following
properties:
\begin{itemize}[parsep=1ex]
  \item [\rm i)] If $r=n,$ then $\mathcal R_0=\pi$ and  $\Sigma(\lambda)$ is a cylinder over a geodesic sphere
  of \,$\s^n$ of radius $\lambda.$
  \item [\rm ii)] If $r<n$, then  $\mathcal R_0=\pi/2$ and $\Sigma(\lambda)$ is  Delaunay-type.
\end{itemize}
Furthermore, up to ambient isometries, any complete connected
rotational $r$-minimal hypersurface of
\,$\s^n\times\R$ is either a member of $\mathscr S$ or a horizontal hyperplane.
\end{theorem}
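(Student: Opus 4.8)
The plan is to mirror the proof of Theorem~\ref{th-r-minimalrotationalHxR}, replacing the geodesic-sphere data of $\hf$ by that of $\s^n$. For the existence part, the case $r=n$ is immediate: a cylinder $S_\lambda(o)\times\R$ over a geodesic sphere of $\s^n$ of radius $\lambda\in(0,\pi)=(0,\mathcal R_0)$ has $\partial_t$ as a principal direction with vanishing principal curvature, so $H_n\equiv0$. For $r<n$, I would put $H_r=0$ in Lemma~\ref{lem-parallel} and use \eqref{eq-a&bsn}: the associated equation is then homogeneous, $y'=ay$ with $a(s)=-(n-r)\cot s$, whose solution with $\tau_\lambda(\lambda)=1$ is the explicit function
\[
\tau_\lambda(s)=\left(\frac{\sin\lambda}{\sin s}\right)^{n-r}.
\]
One should observe that, although $f_{\pi/2}$ is totally geodesic (so $H_r^s=H_{r-1}^s=0$ there), the coefficient $a(s)=-(n-r)\cot s$ and the solution $\tau_\lambda$ are smooth across $s=\pi/2$, and a direct computation with \eqref{eq-Hr} shows the associated $(f_s,\phi)$-graph stays $r$-minimal at $\pi/2$ as well; thus Lemma~\ref{lem-parallel} may be applied across that point. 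Now, for $\lambda\in(0,\pi/2)$ one reads off that $0<\tau_\lambda<1$ on $(\lambda,\pi-\lambda)$, with $\tau_\lambda(\lambda)=\tau_\lambda(\pi-\lambda)=1$, $\tau_\lambda'(\lambda)<0$ and $\tau_\lambda'(\pi-\lambda)>0$. By Lemmas~\ref{lem-parallel} and~\ref{lem-convergenceintegral}, the $(f_s,\phi_\lambda)$-graph $\Sigma'(\lambda)$ with $\rho$-function $\tau_\lambda^{1/r}$ is then a compact $r$-minimal ``block'' over the spherical band bounded by $S_\lambda(o)$ and $S_{\pi-\lambda}(o)$, with vertical tangent spaces along both boundary spheres; reflecting it repeatedly in the horizontal hyperplanes $\s^n\times\{k\,\phi_\lambda(\pi-\lambda)\}$, $k\in\Z$, yields a complete, properly embedded, vertically periodic Delaunay-type $n$-annulus $\Sigma(\lambda)$. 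Since $\lambda$ and $\pi-\lambda$ give congruent surfaces and $\lambda=\pi/2$ degenerates (there $\tau_{\pi/2}\ge1$ everywhere), the natural parameter range is $(0,\pi/2)$, i.e.\ $\mathcal R_0=\pi/2$.

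For the classification I would argue as in Theorem~\ref{th-r-minimalrotationalHxR}. Let $\Sigma$ be complete, connected, rotational and $r$-minimal with axis $\{o\}\times\R$, and set $\Sigma_0:=\{x\in\Sigma:\theta(x)\nabla\xi(x)\ne0\}$, an open set on which $|\theta|\notin\{0,1\}$. If $\Sigma_0=\emptyset$, continuity of $\theta$ and connectedness force $\theta\equiv c\in\{-1,0,1\}$: for $c=\pm1$, $\Sigma$ is a horizontal hyperplane; for $c=0$, $\Sigma=M_0\times\R$ is a vertical cylinder over a rotational---hence a single geodesic---hypersurface $M_0$ of $\s^n$, which is automatically $n$-minimal (recovering (i)) and, when $r<n$, is $r$-minimal only for $M_0$ a great $(n-1)$-sphere. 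If $\Sigma_0\ne\emptyset$, then near any of its points $\Sigma$ is a graph with no horizontal or vertical points, hence (after a possible reflection) an $(f_s,\phi)$-graph whose $\tau$-function solves $y'=ay$ on a maximal interval $(s_0,s_1)$ with $0<\tau<1$. For $r=n$ this $\tau$ is constant, so $\phi$ is affine with $\phi'(0)>0$, forcing the closure of the graph to meet the axis non-smoothly, which is impossible; hence $\Sigma_0=\emptyset$ when $r=n$. For $r<n$, every positive solution of $y'=ay$ equals $C(\sin s)^{-(n-r)}$, so $0<\tau<1$ holds precisely on $(\lambda,\pi-\lambda)$ with $\tau=\tau_\lambda$ and $s_0=\lambda\in(0,\pi/2)$; up to a vertical translation $\phi=\phi_\lambda$, so $\Sigma'$ is an open piece of a half-block of $\Sigma(\lambda)$.

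The main obstacle---flagged in the section preamble---is that the Maximum--Continuation Principle is not available for $r$-minimal hypersurfaces, so upgrading $\Sigma'\subset\Sigma(\lambda)$ to $\Sigma=\Sigma(\lambda)$ requires the density argument of Theorem~\ref{th-r-minimalrotationalHxR}: if $\operatorname{int}(\Sigma\setminus\Sigma_0)$ were nonempty it would contain an open set $U$, horizontal or vertical, whose topological boundary lies in $\overline{\Sigma_0}$, i.e.\ on the boundary spheres $S_\lambda(o)\times\{h\}$ of half-blocks, where the tangent spaces are vertical; so $U$ cannot be horizontal, and a vertical piece of a cylinder over $S_\mu(o)$ glues smoothly to a half-block only if $\mu=\lambda$, whereas for $r<n$ such a cylinder is $r$-minimal only for $\mu=\pi/2$. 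This forces $\partial U=\emptyset$, i.e.\ $\Sigma$ is the great-sphere cylinder and $\Sigma_0=\emptyset$, contradicting $\Sigma_0\ne\emptyset$. Hence $\Sigma_0$ is open and dense; since distinct members of $\mathscr S$ meet transversally and the half-blocks of a fixed $\Sigma(\lambda)$ reassemble into $\Sigma(\lambda)$, connectedness together with the density of $\Sigma_0$ gives $\Sigma=\Sigma(\lambda)$ for a single $\lambda$. The only genuinely new points compared with the $\hf$ case are the regularity of $a$ at $s=\pi/2$, the symmetrization of the maximal intervals into $(\lambda,\pi-\lambda)$ caused by the sign change of $\cot$, and the totally geodesic great-sphere cylinder that thereby appears in the classification when $r<n$.
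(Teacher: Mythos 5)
Your proposal is correct and follows essentially the same route as the paper: the explicit solution $\tau_\lambda(s)=(\sin\lambda/\sin s)^{n-r}$ of $y'=-(n-r)(\cot s)\,y$ on $(\lambda,\pi-\lambda)$, reflection across horizontal hyperplanes to build the Delaunay blocks, and the same density/transversality argument (in place of the unavailable Maximum--Continuation Principle) for the classification. The only additions are two points of care the paper passes over silently --- the behaviour of the ODE and of \eqref{eq-Hr} at $s=\pi/2$, where $f_{\pi/2}$ is totally geodesic and the hypothesis of Lemma~\ref{lem-parallel} that $H_r^s$ not vanish technically fails, and the appearance of the totally geodesic cylinder $S_{\pi/2}(o)\times\R$ as a complete rotational $r$-minimal hypersurface for $r<n$ --- both of which you resolve correctly.
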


\begin{proof}
Statement (i) is trivial. So, assume $r<n$ and
let $\mathscr F=\{f_s\,;\, s\in(0,\pi)\}$ be the family of parallel geodesic spheres
of $\s^n$ centered at some point $o\in\s^n.$   In this setting, the ODE determined by
$\mathscr F$ and $H_r=0$ is
\begin{equation}  \label{eq-odeminimalsn}
y'=ay, \quad a(s)=-(n-r)\cot s, \quad s\in(0,\pi).
\end{equation}

Given $\lambda\in(0,\pi/2),$ the function
\[
\tau_\lambda(s)=\left(\frac{\sin \lambda}{\sin s}\right)^{n-r}\,, \,\, s\in (0,\pi),
\]
is easily seen to be the solution of \eqref{eq-odeminimalsn} satisfying
\[
0<\tau_\lambda|_{(\lambda,\pi-\lambda)}<1=\tau(\lambda)=\tau(\pi-\lambda).
\]

Henceforth, the reasoning in the proof of Theorem \ref{th-delaunaytypesn} applies and
leads to the construction of the Delaunay-type $r$-minimal hypersurface $\Sigma(\lambda)$ as stated in (ii).

Now, suppose that $\Sigma$ is a complete connected
rotational $r$-minimal hypersurface of $\s^n\times\R.$
In this setting, define
\[
\Sigma_0:=\{x\in\Sigma\,;\, \theta(x)\nabla\xi(x)\ne 0\}.
\]

As in the preceding proof, $\Sigma_0=\emptyset$ if $r=n.$ Thus, in this case,
$\Sigma$ is either a horizontal hyperplane or a cylinder over a geodesic sphere of
$\s^n.$

Suppose that $r<n$ and that the axis of $\Sigma$ is $\{o\}\times\R.$ If $\Sigma_0\ne\emptyset,$
then $\Sigma$ is neither a horizontal hyperplane nor a cylinder. In addition,
there exists an $(f_s,\phi)$-graph $\Sigma'\subset\Sigma_0$
and a maximal interval $(s_0,s_1),$ $0\le s_0<s_1\le\pi$,
such that the $\tau$-function of $\Sigma'$ satisfies
$0<\tau|_{(s_0,s_1)}<1$. So, $\tau(s_0)=0$ or $\tau(s_0)=1.$

The formula of the general solution of the ODE \eqref{eq-odeminimalsn} gives that
$\tau$ is positive, not defined at $s=0,\pi$, and bounded away from zero. In particular,
$s_0\ne 0, s_1\ne \pi$ and $\tau(s_0)=\tau(s_1)=1,$ so that $\tau$ is given by
\[
\tau(s)=\left(\frac{\sin s_0}{\sin s}\right)^{n-r}\,, \,\, s_0\,, s\in (0,\pi).
\]

It is clear from this last equality and the considerations preceding it
that $s_0<\pi/2<s_1<\pi,$ which implies that  $\tau$ coincides with $\tau_\lambda$\,, $\lambda=s_0$\,.
Therefore, $\Sigma'$ coincides with the ``block'' $\Sigma'(\lambda)$ that generates $\Sigma(\lambda)$
(see Fig. \ref{fig-minimaldelaynay}), so that  $\Sigma_0$ is a union of open sets of
members of $\mathscr S.$

Let $U\subset\Sigma-\Sigma_0$ be an open set of $\Sigma.$ If $U$ is
nonempty, it cannot be horizontal, for no member of $\mathscr S$ has horizontal points.
If $U$ is vertical, then it is part of the totally geodesic cylinder $S_{\pi/2}\times\R.$
In this case, a boundary point of $U$ is vertical and lies on a geodesic sphere centered
at the axis and of radius $\pi/2.$ However, such a boundary point also lies on some
$\Sigma(\lambda)\in\mathscr S,$ which contradicts the fact that any vertical point
of $\Sigma(\lambda)$ lies on a geodesic sphere of radius different from $\pi/2.$ (In fact, these
vertical points are on  geodesic spheres of radiuses $\lambda<\pi/2$ and $\pi-\lambda>\pi/2.$)

We conclude from the above that $\Sigma_0$ is open and dense in $\Sigma.$
Since $\Sigma$ is connected and two
distinct members of $\mathscr S$ are never tangent, it follows that, for some
$\lambda\in (0,\pi/2),$  $\Sigma$ coincides with $\Sigma(\lambda)\in\mathscr S.$
\end{proof}

\begin{remark}
The case $\mathbb F=\R$ of Theorem \ref{th-r-minimalrotationalHxR} was considered in \cite{elbert-nelli-santos},
whereas Theorem \ref{th-rminimaldelaunaytypesn} was proved in \cite{pedrosa-ritore} for $r=1.$
Again, the methods employed in these works is different from ours and  cannot be applied
to  general products $M\times\R$,  since they all rely  on the Euclidean
and Lorentzian geometries of the underlying spaces of $\s^n\times\R$ and $\h^n\times\R.$
\end{remark}

\section{Translational $H_r(>0)$-hypersurfaces of $\hfr.$}  \label{sec-nonrotationalHrhyp}

Given a Hadamard manifold $M,$ recall that
the \emph{Busemann function} $\mathfrak b_\gamma$ of $M$ corresponding to an arclength geodesic
$\gamma\colon(-\infty,+\infty)\rightarrow M$ is defined as
\[
\mathfrak b_\gamma(p):=\lim_{s\rightarrow +\infty}({\rm dist}_M(p,\gamma(s))-s), \,\,\, p\in M.
\]

The level sets $\mathscr{H}_s:=\mathfrak b_\gamma^{-1}(s)$ of a Busemann function
$\mathfrak b_\gamma$ are called \emph{horospheres} of $M.$
In this setting, as is well  known, $\{\mathscr H_s\,;\, s\in(-\infty, +\infty)\}$ is a parallel family
which foliates  $M.$ Furthermore, any horosphere $\mathscr
H_s$ is homeomorphic to $\R^{n-1}$,  and
any geodesic of $M$ which is asymptotic to $\gamma$ --- i.e., with  the same point $p_\infty$
on the asymptotic boundary $M(\infty)$ of $M$ ---  is orthogonal to each horosphere $\mathscr H_s$\,.
In this case, we say that the horospheres $\mathscr H_s$ are \emph{centered} at $p_\infty$\,.

\begin{figure}[htbp]
\includegraphics{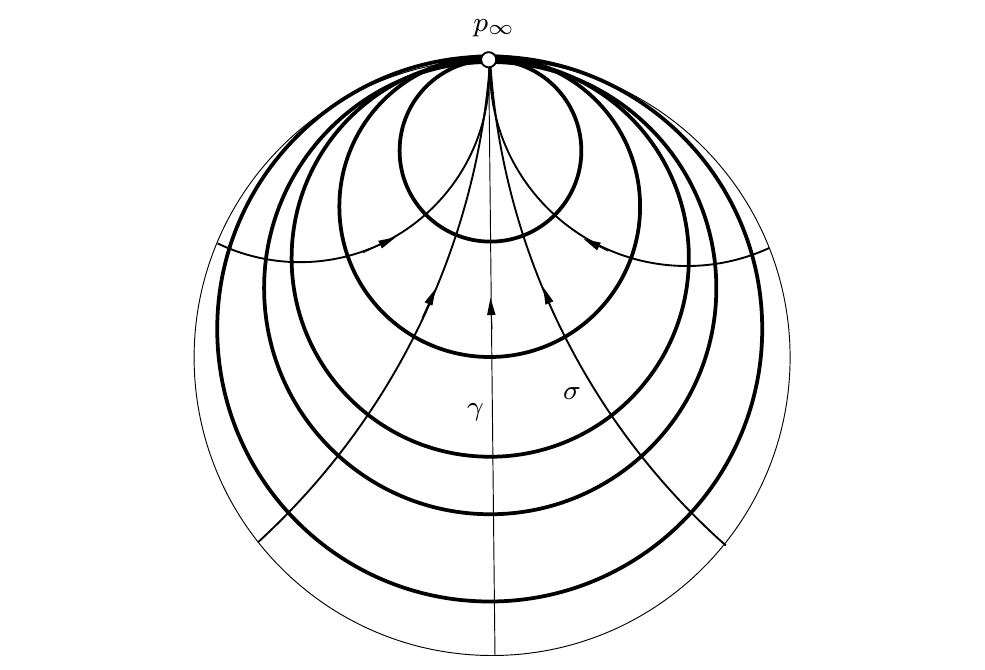}
\caption{A ``ball model'' for a Hadamard manifold.}
\label{fig-hadamardmanifold}
\end{figure}

%\begin{figure}
%\includegraphics{poincareball.pdf}
%\caption{A ``ball model'' for a Hadamard manifold.}
%\label{fig-hadamardmanifold}
%\end{figure}

Therefore, in what concerns  its horospheres, a Hadamard manifold
can be pictured just as the Poincaré ball model of  hyperbolic space $\h^n,$ where
the horospheres centered at a point $p_\infty\in\h^n(\infty)$ are the
Euclidean $(n-1)$-spheres in $\h^n$ which are tangent to $\h^n(\infty)$ at
$p_\infty$ (Fig. \ref{fig-hadamardmanifold}).

In the real hyperbolic space $\h^n,$ any horosphere is totally umbilical
with constant principal curvatures equal to $1.$
As shown  in \cite[Proposition-(vi), pg. 88]{berndtetal},
any horosphere of $\h_\mathbb{F}^m,$  $\mathbb F\ne\R,$
has principal curvatures $1$ and $1/2$ with
multiplicities $1$ and $n-2,$ respectively.
Therefore, \emph{any family $\mathscr F$ of parallel horospheres of
\,$\h_\mathbb{F}^m$ is isoparametric and its members are pairwise congruent.
In addition, for any integer $r\in\{1,\dots, n-1\},$
all horospheres of  \,$\h_\mathbb{F}^m$ have the same (positive)
$r$-th mean curvature, which we denote by $H_r^0.$}

%In this section, we consider Hadamard manifolds $M$ which admit
%special foliations by isoparametric hypersurfaces.
%For such an  $M,$ we provide examples of non rotational
%properly embedded $H_r(\ge 0)$-hypersurfaces in $M\times\R.$

%Hence, $\mathscr F$ is parallel, in the sense of the previous section, with each
%$\mathscr H_s$ having normal field
%\[
%\eta_s(p):=\gamma_p'(s), \,\,\,\, p\in\mathscr H_s\,,
%\]
%where $\gamma_p$ is the geodesic through $p$ such that
%$\gamma(\infty)=p_\infty.$

\begin{theorem}  \label{th-Hr-horospheretype}
Let $\mathscr F:=\{\mathscr H_s\,;\, s\in(-\infty, +\infty)\}$  be a family
of parallel horospheres in  hyperbolic space $\hf.$
Then, for any  even integer \,$r\in\{2,\dots ,n-1\},$ and any
constant $H_r\in(0,H_r^0),$
there exists  a  properly embedded,
everywhere non convex $H_r$-hypersurface $\Sigma$ in $\hfr$  which is homeomorphic to
\,$\R^n.$ Furthermore, $\Sigma$ is foliated by horospheres,
is symmetric with respect to the horizontal hyperplane $\hf\times\{0\}$,
and its height function is unbounded above and below.
\end{theorem}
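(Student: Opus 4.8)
The plan is to apply the $(f_s,\phi)$-graph machinery of Section \ref{sec-Hrgraphs} to an isoparametric family $\mathscr F=\{\mathscr H_s\}$ of parallel horospheres of $\hf$, exactly as was done for families of geodesic spheres in Section \ref{sec-rotationalHrhyp}, but now exploiting the fact that the $r$-th mean curvature $H_r^0$ of each horosphere is a positive constant \emph{independent of $s$}. Since all $H_r^s\equiv H_r^0$ and all $H_{r-1}^s\equiv H_{r-1}^0$ are constant, the coefficients $a,b$ of the ODE \eqref{eq-difequation} given by \eqref{eq-a&b} are themselves constants: choosing on each $\mathscr H_s$ the orientation for which the principal curvatures are negative, and recalling that $r$ is even, one gets $a=-r|H_r^0|/|H_{r-1}^0|<0$ and $b=rH_r/|H_{r-1}^0|>0$. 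Thus $y'=ay+b$ is an autonomous linear ODE with a single equilibrium $\tau_\infty:=-b/a=H_r/H_r^0$, which lies strictly between $0$ and $1$ precisely because $0<H_r<H_r^0$.

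Next I would solve this ODE with the symmetric initial condition suited to producing a hypersurface symmetric about $\hf\times\{0\}$. Concretely, fix $s_0$ and take the solution $\tau$ with $\tau(s_0)=\tau_1$ for a value $\tau_1\in(\tau_\infty,1)$ chosen so that $\tau'(s_0)>0$; since the equation is autonomous and $a<0$, the solution is $\tau(s)=\tau_\infty+(\tau_1-\tau_\infty)e^{a(s-s_0)}$, which is strictly decreasing in $s$, tends to $\tau_\infty\in(0,1)$ as $s\to+\infty$, and increases without bound as $s\to-\infty$; hence there is a unique $s_*<s_0$ with $\tau(s_*)=1$. On the interval $(s_*,+\infty)$ one has $0<\tau<1$, so by Lemma \ref{lem-parallel} the $(f_s,\phi)$-graph $\Sigma'$ with $\rho$-function $\tau^{1/r}$ is an $H_r$-hypersurface of $\hfr$; after a vertical translation I may assume $\phi$ is defined by \eqref{eq-phi} with base point such that $\phi(s_*)=0$. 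Because $\tau(s_*)=1$ and $\tau'(s_*)<0$, Lemma \ref{lem-convergenceintegral}(ii) shows $\phi$ extends to $s_*$ with vertical tangent spaces along the horosphere-section $\mathscr H_{s_*}\times\{0\}$; reflecting $\Sigma'$ across $\hf\times\{0\}$ and gluing along this section yields, by the same geodesic-trajectory smoothing argument used in the proof of Theorem \ref{th-Hrrotational}, a $C^\infty$ hypersurface $\Sigma$ symmetric about $\hf\times\{0\}$ and foliated by horospheres.

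It then remains to verify the asserted global properties. Properness and the homeomorphism type $\Sigma\cong\R^n$ follow because each horosphere $\mathscr H_s$ is homeomorphic to $\R^{n-1}$ and $\Sigma$ is, topologically, the double of $[s_*,+\infty)\times\mathscr H_{s_*}$ along its boundary, with the horospheres foliating $\hf$ guaranteeing that the map to $\hf\times\R$ is proper. The unboundedness of the height function above and below is obtained exactly as in Theorem \ref{th-Hrannuli}(ii): since $\inf_{[s_*,\infty)}\rho=\tau_\infty^{1/r}>0$, for $s>s_0>s_*$ one estimates
\[
\phi(s)=\int_{s_*}^{s}\frac{\rho(u)}{\sqrt{1-\rho^2(u)}}\,du\;\ge\;\int_{s_0}^{s}\rho(u)\,du\;\ge\;\tau_\infty^{1/r}(s-s_0)\to+\infty,
\]
and symmetry gives the analogous lower bound. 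Finally, for the non-convexity: by \eqref{eq-principalcurvatures} the horizontal principal curvatures are $k_i=-\rho k_i^s>0$ (they have the sign of $-\rho$ times the negative $k_i^s$), while the remaining principal curvature is $k_n=\rho'$, which has the same sign as $\tau'<0$; hence at every point of $\Sigma'$ there are principal curvatures of both signs, all nonzero, so $\Sigma$ is strictly non-convex — i.e. everywhere non convex — at every point. The main obstacle I anticipate is purely bookkeeping rather than conceptual: one must check carefully that the chosen orientation makes $a<0$ and $b>0$ for \emph{even} $r$ (the evenness is exactly what makes $H_r^0>0$ with the inward-type orientation and keeps the sign of $b$ positive), and that $0<\tau<1$ holds on the \emph{entire} half-line $[s_*,+\infty)$ so that the graph is genuinely complete after reflection; both reduce to the elementary analysis of the autonomous solution above.
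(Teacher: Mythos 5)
Your proposal is correct and follows essentially the same route as the paper: the constant-coefficient ODE $y'=ay+b$ coming from the isoparametric horosphere family, an explicit exponential solution trapped between the equilibrium $H_r/H_r^0$ and $1$, extension and gluing via Lemma \ref{lem-convergenceintegral} plus reflection across $\hf\times\{0\}$, and non-convexity read off from \eqref{eq-principalcurvatures}; the only difference is your opposite orientation convention on the horospheres, which merely swaps the signs of $a,b$ and the direction of $s$ relative to the paper's $b<0<a$. One small internal slip: with your convention ($a<0$, $\tau_1>\tau_\infty$) one has $\tau'(s_0)=a(\tau_1-\tau_\infty)<0$, not $>0$, which is in fact consistent with the strictly decreasing solution you then correctly analyze, so nothing downstream is affected.
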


\begin{proof}
%Let $p_\infty$ be the center of the horospheres $\mathscr H_s$\,.
For each $s\in (-\infty,\infty)$, consider the isometric immersion
$f_s:\R^{n-1}\rightarrow\hf$  such that $f_s(\R^{n-1})=\mathscr H_s$\,.
Since all the principal curvatures of $f_s$ are constant and independent of $s,$
the coefficients  $a$ e $b$ of the ODE $y'=ay+b$ associated to this family
are constants. Also, since
$r$ is even and $0<H_r<H_r^0$\,, we have
\[
b<0<a \quad\text{and}\quad 0<-\frac{b}{a}<1.
\]

In this setting,   consider the   solution $\tau:(-\infty, 0]\rightarrow\R$ of $y'=ay+b$:
\begin{equation} \label{eq-tauspecific}
\tau(s)=e^{a(s-s_0)}-\frac{b}{a}\,,  \,\, \, s_0=\log(1+b/a)^{-1/a},
\end{equation}
and observe that it satisfies:
\begin{equation}  \label{eq-boudfortau}
0<-\frac{b}{a}<\tau(s)\le 1=\tau(0) \,\,\, \forall s\in (-\infty, 0].
\end{equation}

\begin{figure}[htbp]
\includegraphics{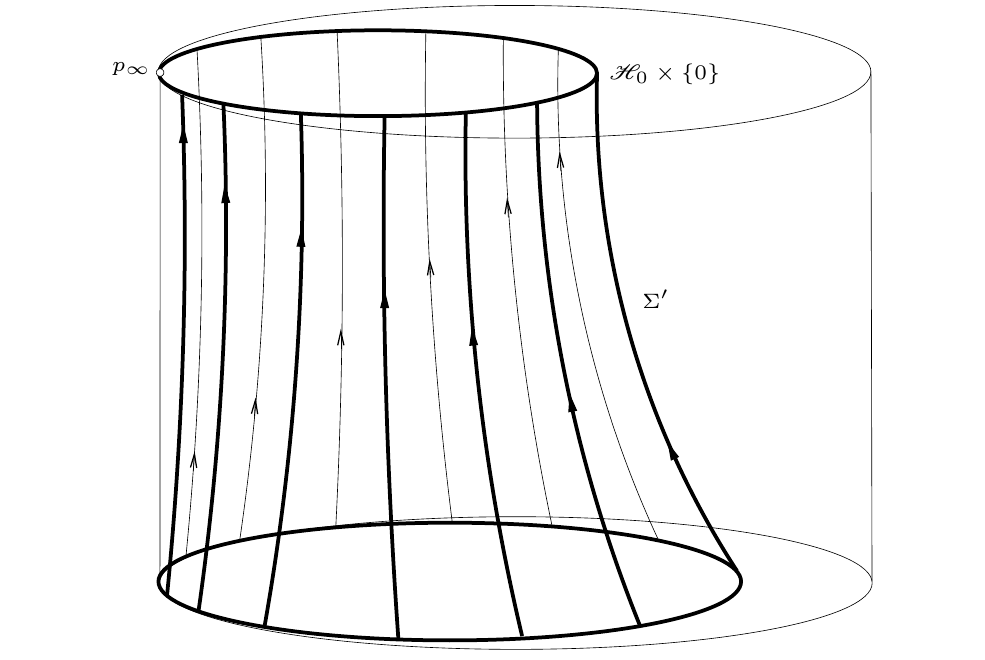}
\caption{\small A piece of the graph $\Sigma'$, on which
all the trajectories of  $\nabla\xi$
meet $\mathscr H_{0}\times\{0\}$ orthogonally.}
\label{fig-Horograph}
\end{figure}

By Lemma \ref{lem-parallel},
the $(f_s,\phi)$-graph $\Sigma'$
with $\rho=\sqrt[r]{\tau}$
is an $H_r$-hypersurface of $\hf\times\R.$ The function $\phi,$ in this case, is given by
\[
\phi(s):=-\int_{s}^{0}\frac{\rho(u)}{\sqrt{1-\rho^2(u)}}du, \,\,\, s\in(-\infty, 0).
\]
Notice that, by \eqref{eq-boudfortau}, one has
\[
\tau'(s)=a\tau(s)+b>a\frac{-b}{\phantom{-}a}+b=0,
\]
so that $\tau'(0)>0.$ Thus, by Lemma \ref{lem-convergenceintegral},
$\phi$ is well defined. Also, $\phi$ is negative on $(-\infty, 0)$ and is unbounded.
Indeed, for all  $s\in(-\infty, 0),$
\[
-\phi(s) = \int_{s}^{0}\frac{\rho(u)}{\sqrt{1-\rho^2(u)}}du \ge  \int_{s}^{0}\rho(u)du
 \ge -\inf\rho|_{[s,0]}s=-s\rho(s)\,,
\]
which implies that $\phi$ is unbounded, since $\rho(s)\rightarrow (-b/a)^{1/r}>0$ as
$s\rightarrow -\infty.$
%\[
%\lim_{s\rightarrow-\infty}\phi(s)=-\infty.
%\]

Denoting by $B_0$ the horoball of $\hf$ with boundary $\mathscr H_0$\,,
it follows from the above considerations that $\Sigma'$ is
an $H_r$-graph over $\hf-B_0$ which is unbounded
and has boundary $\partial\Sigma'=\mathscr H_0\times\{0\}$
(Fig. \ref{fig-Horograph}).
In particular, $\Sigma'$ is homeomorphic to $\R^n$.
Furthermore, $\Sigma'$ is everywhere non convex, since,
from the identities \eqref{eq-principalcurvatures}, one has
\[
k_i=-\lambda_i\rho<0 \,\,\, (1\le i\le n-1) \,\,\,\, \text{and}\,\,\,  k_n=\rho'>0,
\]
where $\lambda_i$ is the (positive constant) $i$-th principal curvature of $f_s$\,.

Finally, since $\rho(0)=1,$  as in the previous theorems, we have that
any trajectory of $\nabla\xi$ on $\Sigma'$ meets  $\partial\Sigma'$ orthogonally.
Consequently, setting $\Sigma''$ for the reflection of $\Sigma'$ with respect to
$\hf\times\{0\}$ and defining
\[
\Sigma:={\rm closure}\,(\Sigma')\cup {\rm closure}\,(\Sigma''),
\]
we have that $\Sigma$ is a properly embedded
$H_r$-hypersurface of $\hf\times\R$ which is foliated
by horospheres and is homeomorphic to $\R^n$ (Fig. \ref{fig-CompleteK-nonconvex}),
as we wished to prove.
\end{proof}

\begin{figure}[htbp]
\includegraphics{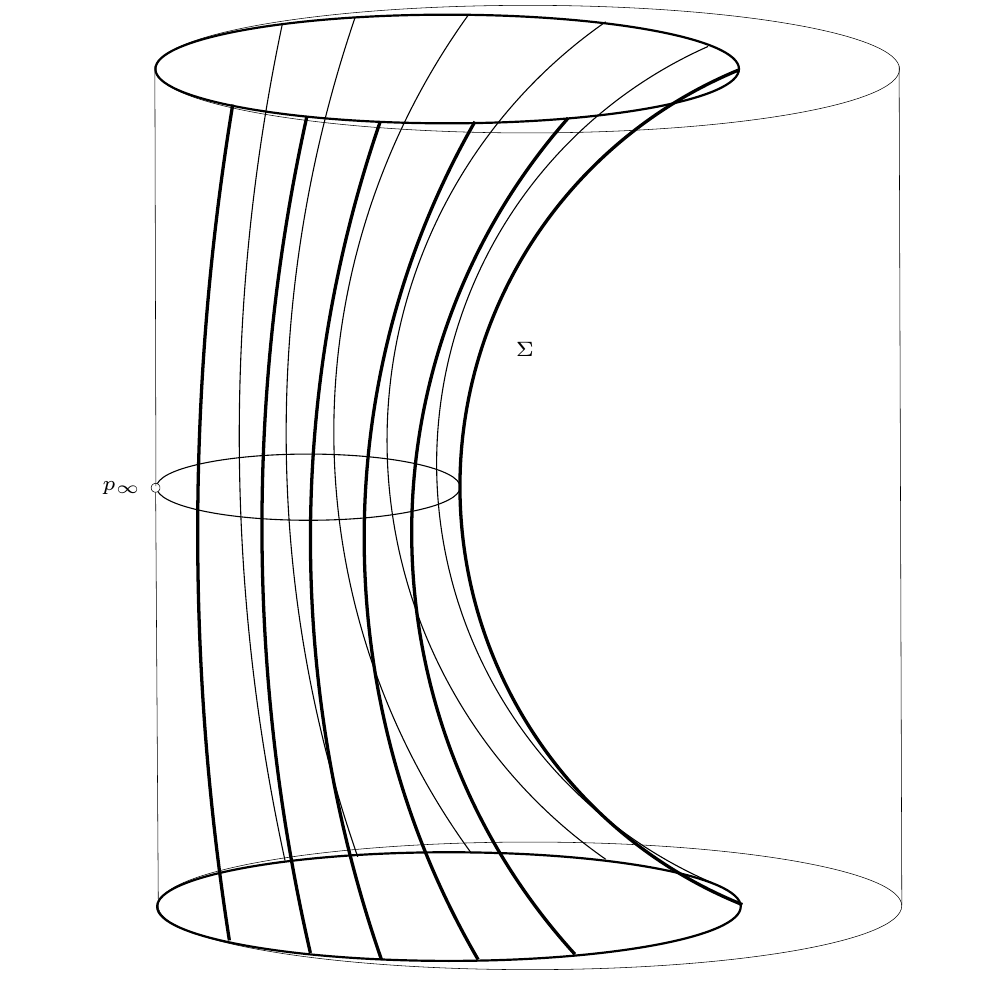}
\caption{\small A piece of a properly embedded everywhere non convex
$H_r(> 0)$-hypersurface of $\hf\times\R$ which is foliated by horospheres.}
\label{fig-CompleteK-nonconvex}
\end{figure}

%%%%%%%%%%%%%%%%%%%%%%%%%%%%%%%%%%%%%%%%%%%%%%%%%%%%%%%%%%%%%%%%%

Our next result establishes that the conditions on the parity of $r$ and on the
sign of $H_r-H_r^0$ in Theorem \ref{th-Hr-horospheretype} are necessary to the conclusion.

\begin{theorem} \label{th-nonexistencehorospheres}
Let $\mathscr F$ be a family of parallel  horospheres in
 $\hf.$
Assume that, for some $r\in\{1,\dots, n\},$  $\Sigma$ is a complete connected $H_r(>0)$-hypersurface
of \,$\hf\times\R$ with no horizontal points,
and that each connected component of any horizontal section $\Sigma_t\subset\Sigma$ is a
(vertically translated) horosphere of $\mathscr F.$ Under these conditions, one has $r<n.$
Assume, in addition, that  either of the following assertions holds:
\begin{itemize} %[parsep=1ex]
  \item [\rm i)] $r$ is even and $H_r\ge H_r^0.$
  \item [\rm ii)] $r$ is odd.
\end{itemize}
Then, $\Sigma=\mathscr H_s\times\R$ for some $\mathscr H_s\in\mathscr F.$  In particular, $H_r=H_r^0.$
\end{theorem}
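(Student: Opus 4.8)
The strategy is to reduce the statement to the analysis of the first-order ODE $y'=ay+b$ (with $a,b$ the constants attached to the horosphere family $\mathscr F$, as in Section \ref{sec-Hrgraphs}) and to exploit the hypothesis that $\Sigma$ has no horizontal points. First I would show $r<n$: since $\Sigma$ has no horizontal points, the set $\Sigma_0\subset\Sigma$ where $\Theta\nabla\xi\ne 0$ is open and, using that $\Sigma$ is foliated by horospheres (which are noncompact), a standard connectedness argument forces $\Sigma$ to be either a vertical cylinder $\mathscr H_s\times\R$ or to contain a nontrivial $(f_s,\phi)$-graph; in the latter case the associated $\tau=\rho^r$ solves $y'=ay+b$ on a maximal interval $(s_0,s_1)$ with $0<\tau<1$. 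If $r=n$ then $a=0$ and $\tau'=b=(-1)^{n-1}nH_n/H_{n-1}^0$ is a nonzero constant, so $\tau$ is affine and monotone; tracing it to the boundary of the interval shows that the corresponding graph cannot close up into a complete hypersurface foliated by horospheres (the horizontal sections would have to degenerate or the height would run off an endpoint where $\rho$ stays bounded away from $0$ and $1$), contradicting completeness. Hence $r<n$, and $a\ne 0$.

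Next, under hypothesis (i) or (ii), I claim $\Sigma_0=\emptyset$, i.e. $\Sigma$ is the cylinder $\mathscr H_s\times\R$. Suppose not; take the $(f_s,\phi)$-graph piece and its solution $\tau$ on the maximal interval $(s_0,s_1)$ with $0<\tau|_{(s_0,s_1)}<1$. Because $a$ and $b$ are \emph{constant} here, the equilibrium of $y'=ay+b$ is the constant $\tau_\ast=-b/a$, and the qualitative behaviour of $\tau$ is completely determined by the signs of $a$ and $b$. In case (ii), $r$ odd: then $H_r^s$ for the horospheres has sign $(-1)^r=-1$, so from \eqref{eq-a&b} one checks $a<0$ and $b=(-1)^{r-1}rH_r/H_{r-1}^0>0$, whence $\tau_\ast=-b/a>0$ but also $\tau'=a\tau+b$; the solution is $\tau(s)=\tau_\ast+(c)e^{as}$, which is monotone and escapes the strip $(0,1)$ only by crossing $\tau=0$ or $\tau=1$ with nonzero derivative, or by tending to $\tau_\ast$. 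One then analyses each endpoint: at a crossing of $\tau=1$ with $\tau'\ne 0$, Lemma \ref{lem-convergenceintegral} makes $\phi$ extend, the tangent space goes vertical, and one would glue — but I must check that the glued object is again foliated by horospheres with no horizontal points and is $H_r$, then iterate; the monotonicity of $\tau$ (no interior critical point, since $\tau''=a\tau'$ and $\tau'$ has constant sign) prevents the periodic Delaunay-type closing that occurred in the rotational case, and at the other end $\tau\to\tau_\ast\in(0,1)$ forces $\rho\to\tau_\ast^{1/r}\in(0,1)$, so $\phi$ grows linearly and the graph is a complete entire-type graph whose angle function stays bounded away from $0$ — but then it has horizontal-free sections going off to a horosphere "at infinity," and completeness plus the foliation hypothesis is violated unless the piece was trivial. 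The net effect is that no nontrivial $(f_s,\phi)$-graph piece is compatible with $\Sigma$ being complete, foliated by horospheres, and horizontal-point-free; hence $\Sigma_0=\emptyset$.

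In case (i), $r$ even and $H_r\ge H_r^0$: here $H_r^s=H_r^0>0$ for the horospheres, $a>0$, and $b=(-1)^{r-1}rH_r/H_{r-1}^0<0$, so $\tau_\ast=-b/a=H_r/H_r^0\ge 1$. The general solution $\tau(s)=\tau_\ast+c\,e^{as}$ with $a>0$: if $c\ge 0$ then $\tau\ge\tau_\ast\ge 1$, never in $(0,1)$; if $c<0$ then $\tau$ is increasing and stays below $\tau_\ast$, approaching $\tau_\ast\ge 1$, so it can only reach the strip $(0,1)$ on a half-line where $\tau<1$, with $\tau\to$ either $1$ (at $+\infty$, asymptotically, if $\tau_\ast=1$) or a limit $\ge 1$; in neither situation does $\tau$ hit the value $1$ with $\tau'\ne 0$ at a finite endpoint in a way that allows a closed-up complete surface — when $\tau_\ast=1$ the solution $\tau\nearrow 1$ only as $s\to+\infty$, so $\phi$ converges and $\Sigma'$ limits onto a horizontal slice, forcing a horizontal point in the closure, contradiction; when $\tau_\ast>1$, $\tau$ reaches $1$ at a finite $s_1$ with $\tau'(s_1)>0$, the tangent goes vertical, one reflects, and on the other side $\tau$ decreases back through $(0,1)$ toward $0$ — but reaching $\tau=0$ at finite $s$ would put a horizontal point there, while reaching it only at an endpoint of the maximal interval with $\tau\to 0$ makes $\rho\to 0$, again yielding a horizontal point in the limit. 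Either way completeness and the horizontal-point-free hypothesis collapse the graph piece. Thus again $\Sigma_0=\emptyset$, so $\Sigma=\mathscr H_s\times\R$ for some $\mathscr H_s\in\mathscr F$, and since horospheres all have $r$-th mean curvature $H_r^0$ this forces $H_r=H_r^0$.

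\textbf{Main obstacle.} The delicate point is not the ODE bookkeeping but the \emph{geometric} step: translating "$\tau$ exits the strip $(0,1)$ at an endpoint" into a genuine contradiction with $\Sigma$ being \emph{complete}, foliated by horospheres, and \emph{free of horizontal points}. One must carefully rule out (a) the appearance of a horizontal point when $\rho\to 0$, (b) a non-smooth "corner" where a graph piece would have to be attached to a vertical cylinder along a horosphere, and (c) any Delaunay-type periodic re-closing — which is excluded here precisely because, $a$ and $b$ being constant, $\tau$ has no interior critical point (the rotational arguments of Theorems \ref{th-Hrannuli} and \ref{th-delaunaytypesn} relied on $\tau''>0$ at a critical point producing a minimum, whereas here $\tau$ is strictly monotone throughout). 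Making (a)–(c) precise, using that a nonzero limit of $\Theta$ or a limit $\Theta\to 0$ both contradict the hypotheses via \eqref{eq-thetaparallel} and the no-horizontal-points assumption, is where the real work lies; once that is in place the conclusion $\Sigma=\mathscr H_s\times\R$ and $H_r=H_r^0$ is immediate.
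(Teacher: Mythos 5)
Your overall strategy is the same as the paper's: localize to an $(f_s,\phi)$-graph, use that the coefficients $a,b$ of $y'=ay+b$ are constants for a horosphere family, and track the solution $\tau=\rho^r$ on the maximal interval where $0<\tau<1$. The genuine gap is in case (ii), and it stems from an inconsistent choice of orientation for $\mathscr F$. In case (i) you take all $H_j^s>0$ (giving $a>0$, $b<0$ for $r$ even), but in case (ii) you switch to the convention in which $H_r^s$ has sign $(-1)^r$, obtaining $a<0<b$. Once the orientation of the family is fixed (and it is fixed, up to the vertical reflection used to normalize $\phi'>0$), the signs are determined: with $H_j^s=H_j^0>0$ one has $a=rH_r^0/H_{r-1}^0>0$ and $b=(-1)^{r-1}rH_r/H_{r-1}^0$, so for $r$ odd both $a$ and $b$ are positive. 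The correct argument for (ii) is then one line: $\tau'=a\tau+b\ge b>0$, so $\tau$ must vanish at a finite left endpoint $s_1$, where $\phi$ extends with $\phi'(s_1)=0$ and $\Sigma$ acquires a horizontal point --- contradiction. Your version instead lands in the regime $a<0<b$ with interior equilibrium $-b/a=H_r/H_r^0$, which (for $0<H_r<H_r^0$) is exactly the setting in which Theorem \ref{th-Hr-horospheretype} \emph{constructs} a complete, horizontal-point-free $H_r$-hypersurface foliated by horospheres; the contradiction you try to extract there (``completeness plus the foliation hypothesis is violated'') cannot be valid as stated, since it would disprove that existence theorem.

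Two further points. In case (i) several signs are reversed (with $a>0$ and $c<0$ the solution $\tau_*+c\,e^{as}$ is decreasing, not increasing, and as $\tau\to 1$ the limiting points are \emph{vertical}, not horizontal, since $\Theta\to 0$), though you do eventually reach the correct mechanism: $\tau$ is decreasing and concave on the maximal interval, hence reaches $0$ at a finite endpoint, and $\tau=0$ forces a horizontal point. More importantly, the step you defer as the ``main obstacle'' --- converting $\tau(s_i)=0$ into an actual horizontal point of $\Sigma$ --- is in fact the single contradiction the paper uses in every case, including the proof that $r<n$: for $r=n$ one has $a=0$, $\tau$ is affine and nonconstant, so it vanishes at a finite endpoint of the maximal interval, producing a horizontal point (the contradiction is with the absence of horizontal points, not with completeness); hence $\Sigma_0=\emptyset$, $\Sigma$ is a cylinder $\mathscr H_s\times\R$, and $H_n=0$ contradicts $H_n>0$. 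Since the integrand $\rho/\sqrt{1-\rho^2}$ is bounded near $\rho=0$, $\phi$ extends to such an endpoint with $\phi'=0$ there, and completeness places the limit points on $\Sigma$ as critical points of $\xi$. This step is short, but it is the one your write-up leaves open, and without it none of the three cases closes.
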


\begin{proof}
  Let $\Sigma_0\subset\Sigma$ be the open set of points $x\in\Sigma$
  satisfying $\theta(x)\ne 0.$ Our aim is to prove that $\Sigma_0$ is empty.
  Assuming otherwise, choose  $x_0\in\Sigma_0$\,.
  Since $\Sigma$ has no horizontal points, we can suppose
  (after possibly a reflection about a horizontal hyperplane) that
  there is an open neighborhood  $\Sigma'\subset\Sigma_0$ of $x_0$ which is
  an $(f_s,\phi)$-graph, $f_s\in\mathscr F.$

  The $\tau$-function associated to $\Sigma'$ satisfies
  $\tau'=a\tau+b,$ where $a$ and $b\ne 0$ are the (constant) functions \eqref{eq-a&b} determined by $\mathscr F$
  and $H_r$\,. Also,  there is a maximal interval $I=(s_1, s_2)\subset\R,$ $-\infty\le s_1<s_2\le+\infty$,
  such that $\tau(I)\subset (0,1).$

  Let us suppose that $r=n.$ In this case, we have $a=0,$ which gives
  $\tau'(s)=b\ne 0,$ that is,  $\tau(s)=bs+c,$ $c\in\R.$ In particular, $s_1>-\infty$ and $s_2<+\infty$,
  and $\tau$ is increasing (if $b>0$), or decreasing (if $b<0$) in $(s_1,s_2).$
  So, $\tau$ vanishes in $s_1$ or $s_2$\,. Assuming the former, we have that $\phi$ is defined at $s_1$ and
  $\phi'(s_1)=0.$ Thus, for any $p\in\R^{n-1},$  the point
  $x=(f_{s_1}(p),\phi(s_1))\in \Sigma'$ is  horizontal, contrary to our assumption.
  Therefore, if $r=n,$ then $\Sigma_0=\emptyset,$ which implies that $\Sigma=\mathscr H_s\times\R$ for some
  $s\in\R.$ But this contradicts the assumed positiveness of $H_n$\,. Hence, we must have $r<n.$

  Let us assume now that (i) holds. Then, we have $b<0<a.$ Also, on $(s_1,s_2),$
  \[
  \tau'=a\tau+b<a+b=\frac{r(H_r^0-H_r)}{H_{r-1}^0}\le 0 \quad\text{and}\quad \tau''=a\tau'+b<0,
  \]
  that is, $\tau$ is decreasing and concave in $(s_1,s_2),$ which clearly implies that
  $s_2<+\infty$ and $\tau(s_2)=0.$ As in the preceding paragraph, this leads to the
  existence of a horizontal point of $\Sigma.$
  Therefore, $\Sigma_0=\emptyset$ if (i) holds, which implies
  that $\Sigma=\mathscr H_s\times\R$ for some $s\in\R.$

  Finally, let us assume that (ii) holds. In this case, one has $a, b>0,$ which
  gives that $\tau$ is increasing and convex. From this point, we get easily to the conclusion
  by reasoning just as in the last paragraph.
\end{proof}

An isometry $\varphi$ of $\hf$ which fixes only one point $p_\infty\in \hf(\infty)$ is called
\emph{parabolic.} Such isometries have the following fundamental property:
\emph{The horospheres of $\hf$ centered at $p_\infty\in \hf(\infty)$ are invariant by parabolic
isometries of $\hf$ that fix $p_\infty$} (cf. \cite[Proposition 7.8]{eberlein-o'neil}).

We point out that any isometry $\varphi$ of $\hf$  has a natural extension
to an isometry $\Phi$  of $\hfr.$ Namely,
\[
\Phi(p,t)=(\varphi(p),t), \,\,\, (p,t)\in \hfr.
\]
We call $\Phi$ \emph{parabolic} if $\varphi$ is parabolic. More specifically, if
\[
\mathscr F=\{f_s:\R^{n-1}\rightarrow\hf \,;\,  s\in(-\infty,+\infty)\}, \,\,\, f_s(\R^{n-1})=\mathscr H_s\,,
\]
is the family of parallel horospheres which are invariant by $\varphi,$ we say that $\varphi$ and $\Phi$ are
$\mathscr F$-\emph{parabolic} isometries.

%%%%%%%%

In the upper half-space model of $\h_\R^n$,
Euclidean horizontal translations in a fixed direction are parabolic.
As for the other hyperbolic spaces, the parabolic isometries are more involved
(see, e.g.,  \cite{kim}). Nevertheless, inspired by the real case,
we say that parabolic isometries are \emph{translational}.

%%%%%%%%

Finally,  let us remark that,
given a family $\mathscr F$ of
parallel horospheres in $\hf,$  if
a hypersurface $\Sigma$ of $\hf\times\R$ is invariant by
$\mathscr F$-parabolic isometries of $\hfr$, then
any connected component of any horizontal section $\Sigma_t\subset\Sigma$
is contained in a (vertically translated) horosphere of $\mathscr F.$

Now, we are in position to classify all complete connected $H_r(>0)$-hypersurfaces of
$\hfr$  with no horizontal points which are invariant by parabolic isometries.

\begin{theorem} \label{th-classificationhorospheres}
Let $\mathscr F$ be a family of parallel horospheres of $\hf.$
Assume that   $\Sigma$ is a complete connected $H_r(>0)$-hypersurface
of $\hfr$, $r\in\{1,\dots, n\},$ with no horizontal points, which is
invariant by $\mathscr F$-parabolic isometries.
Then, up to ambient isometries,  $\Sigma$ is either
a cylinder over a horosphere of \,$\hf$ or the embedded $H_r$-hypersurface obtained in
Theorem \ref{th-Hr-horospheretype}.
\end{theorem}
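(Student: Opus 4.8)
The plan is to run the same kind of argument used in the proofs of Theorem~\ref{th-classificationHr>0} and Theorem~\ref{th-r-minimalrotationalHxR}, adapted to the parabolic (horospherical) setting. First I would set $\Sigma_0:=\{x\in\Sigma\,;\,\Theta(x)\ne 0\}$, which is open in $\Sigma$. Since $\Sigma$ is invariant by $\mathscr F$-parabolic isometries, every connected component of every horizontal section $\Sigma_t$ lies in a vertically translated horosphere of $\mathscr F$; combined with the absence of horizontal points (so $\nabla\xi$ never vanishes where $\Theta\ne\pm1$, and in fact $\Theta\ne\pm1$ everywhere), this means that near any $x_0\in\Sigma_0$, after possibly reflecting across a horizontal hyperplane so that $\phi$ is increasing, $\Sigma$ coincides with an $(f_s,\phi)$-graph $\Sigma'$ built on the isoparametric family $\mathscr F$. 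If $\Sigma_0=\emptyset$, then $\Theta\equiv 0$, so $\Sigma$ is a vertical hypersurface foliated by horospheres, i.e. a cylinder $\mathscr H_s\times\R$ over a single horosphere, which is one of the two stated possibilities.

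Next, assuming $\Sigma_0\ne\emptyset$, I would invoke Lemma~\ref{lem-parallel}: the $\tau$-function of $\Sigma'$ solves $\tau'=a\tau+b$ with the \emph{constant} coefficients $a,b$ determined by $\mathscr F$ and $H_r$ (constant because all horospheres of $\hf$ are pairwise congruent with the same principal curvatures $1$ and $1/2$). There is a maximal interval $(s_1,s_2)$, $-\infty\le s_1<s_2\le+\infty$, with $0<\tau<1$ on it, and at an endpoint $\tau$ takes the value $0$ or $1$. By the argument in Theorem~\ref{th-nonexistencehorospheres}, the value $\tau=0$ at a finite endpoint forces a horizontal point of $\Sigma$, contradicting the hypothesis; so at any finite endpoint one has $\tau=1$. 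Now I would split into cases according to the parity of $r$ and the sign of $H_r-H_r^0$, exactly as in Theorem~\ref{th-nonexistencehorospheres}. When $r$ is odd, or $r$ is even with $H_r\ge H_r^0$ (or $r=n$), Theorem~\ref{th-nonexistencehorospheres} already shows $\Sigma_0=\emptyset$, so $\Sigma$ is a cylinder $\mathscr H_s\times\R$; note this case only occurs with $H_r=H_r^0$. When $r$ is even and $0<H_r<H_r^0$, the coefficients satisfy $b<0<a$ and $0<-b/a<1$: the general solution is $\tau(s)=Ce^{as}-b/a$, and the condition $\tau=1$ at a finite endpoint $s_0$ (say $s_0=0$ after a vertical translation, which we may arrange since $a,b$ are autonomous) pins down $C$ uniquely, giving precisely the solution \eqref{eq-tauspecific}. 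Hence $\tau$, and therefore $\phi$ (up to an additive constant, via \eqref{eq-phi}), coincides with the data of the graph $\Sigma'$ from the proof of Theorem~\ref{th-Hr-horospheretype}.

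From here I would conclude by a density-and-uniqueness argument rather than by the Maximum-Continuation Principle, since the relevant piece need not be strictly convex. Precisely: $\Sigma_0$ is open and each of its components is an open piece of (a vertical translate and reflection of) the $H_r$-hypersurface $\Sigma$ of Theorem~\ref{th-Hr-horospheretype}; and $\Sigma\setminus\Sigma_0$ has empty interior, because an interior vertical point would lie in an open vertical set, necessarily part of a cylinder over a horosphere, which has $r$-th mean curvature $H_r^0\ne H_r$ --- a contradiction (here we use $H_r<H_r^0$). Actually, since the Maximum-Continuation Principle \emph{is} available (the hypersurface of Theorem~\ref{th-Hr-horospheretype} is strictly convex at no point, so one must be slightly careful) --- I would instead argue that $\Sigma_0$ is open and dense, that two distinct vertical translates/reflections of the model hypersurface meet only transversally, and that $\Sigma$ is connected, so $\Sigma$ globally coincides with a single vertical translate of the hypersurface from Theorem~\ref{th-Hr-horospheretype}, possibly composed with an ambient isometry and reflection. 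Up to ambient isometries this is exactly the stated conclusion.

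The main obstacle I anticipate is the gluing/continuation step: because the model hypersurface of Theorem~\ref{th-Hr-horospheretype} is \emph{nowhere} strictly convex, one cannot directly quote the Maximum-Continuation Principle as in Theorem~\ref{th-classificationHr>0}. The fix is the same device used in Theorems~\ref{th-r-minimalrotationalHxR} and~\ref{th-rminimaldelaunaytypesn}: show $\Sigma\setminus\Sigma_0$ has empty interior by ruling out horizontal open pieces (the model has no horizontal points) and vertical open pieces (a vertical cylinder over a horosphere has $H_r=H_r^0\ne H_r$), deduce that $\Sigma_0$ is open and dense, and then use that the trajectories of $\nabla\xi$ are geodesics (hence the glued object is $C^\infty$) together with connectedness to propagate the local identification to all of $\Sigma$. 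One should also double-check that, in the cylinder case, there is no further freedom: any two horospheres of $\hf$ are congruent, so all cylinders $\mathscr H_s\times\R$ are ambient-isometric, consistent with the ``up to ambient isometries'' phrasing.
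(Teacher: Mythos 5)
Your proposal follows the paper's own proof of Theorem \ref{th-classificationhorospheres} essentially step for step: the same reduction via Theorem \ref{th-nonexistencehorospheres} to the case $r<n$ even and $0<H_r<H_r^0$, the same identification of the $\tau$-function with the solution \eqref{eq-tauspecific} by uniqueness of solutions of the constant-coefficient ODE with a prescribed value at a point where $\tau=1$, and the same density-plus-connectedness conclusion in place of the Maximum-Continuation Principle (the paper, too, finishes ``from the density of $\Sigma_0$ in $\Sigma$'' rather than by a tangency argument). Your auxiliary observations --- that $\tau=0$ at a finite endpoint would force a horizontal point, and that an open vertical piece would have $r$-th mean curvature $H_r^0\ne H_r$ --- are exactly the ones the paper uses.

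There is, however, one step that deserves scrutiny, and it occurs both in your write-up and in the paper's: the claim that the maximal interval on which $0<\tau<1$ has a finite endpoint at which $\tau$ equals $0$ or $1$. Since $a$ and $b$ are constants with $b<0<a$ and $-b/a=H_r/H_r^0\in(0,1)$, the equation $y'=ay+b$ is autonomous and admits the equilibrium solution $\tau\equiv -b/a$, which stays strictly between $0$ and $1$ on all of $\R$ and never reaches an endpoint. By Lemma \ref{lem-parallel}, this $\tau$ produces a complete, constant-angle, entire $(f_s,\phi)$-graph over $\hf$ (with $\phi$ linear) which is invariant by $\mathscr F$-parabolic isometries, has no horizontal points, has constant $r$-th mean curvature $H_r=H_r^0\tau\in(0,H_r^0)$, and is neither a cylinder over a horosphere nor congruent to the bigraph of Theorem \ref{th-Hr-horospheretype} (its angle function is a nonzero constant, whereas the latter has vertical points along $\mathscr H_0\times\{0\}$). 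Your sentence ``at an endpoint $\tau$ takes the value $0$ or $1$'' silently discards this solution, just as the paper's assertion that ``$\tau$ is increasing and convex'' presupposes $\tau>-b/a$. So either this constant-angle solution must be excluded by an argument neither you nor the paper supplies, or it must be added to the list of conclusions; as written, the dichotomy in the final step is not justified.
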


\begin{proof}
  Assume that $\Sigma$ is not a cylinder over a horosphere of $\hf.$
  By Theorem \ref{th-nonexistencehorospheres}, we have that $r(<n)$ is even and
  $0<H_r<H_r^0.$
  %In addition, by the tangency principle, $\theta$ does not vanish in an open set
  %of $\Sigma,$ so that the set $\Sigma_0\subset\Sigma$ on which $\theta\nabla\xi$ never vanishes
  %is open and dense in $\Sigma.$ From this fact, as we did before,
  In this case, the open set
  \[
  \Sigma_0:=\{x\in\Sigma\,;\, \theta(x)\ne 0\}
  \]
  is dense in $\Sigma.$ Indeed, the $r(<n)$-th mean curvature
  of any nonempty open $\mathscr F$-invariant vertical
  set $U\subset\Sigma$ would be $H_r^0>H_r.$ Thus, given $x_0\in\Sigma_0\,,$
  there exists an $(f_s,\phi)$-graph $\Sigma'\owns x_0$ in $\Sigma_0$ with $f_s\in\mathscr F.$
  In addition,  the associated $\tau$-function, restricted to a maximal interval
  $(s_0,s_1),$ $-\infty\le s_0<s_1\le+\infty,$ satisfies $0<\tau<1.$

  We have that $\tau$ is a solution of the ODE $y'=ay+b$ determined by $\mathscr F$ and $H_r\,.$
  The conditions on the parity of $r$ and the sign of $H_r-H_r^0$, as in the proof of Theorem \ref{th-Hr-horospheretype},
  give that $\tau$ is increasing and convex, which implies that $s_1<+\infty$ and that $\tau(s_1)=1.$

  Consider the solution \eqref{eq-tauspecific} of $y'=ay+b$ and denote it by  $\tilde\tau.$ Since
  $\tilde\tau(0)=\tau(s_1)=1,$ and the coefficients $a$ and $b$ are constants, by the uniqueness of
  solutions satisfying initial conditions,
  we have that $\tau(s)=\tilde\tau(s-s_1).$ This, together with the homogeneity of the horospheres of $\hf,$
  implies that $\Sigma'$ coincides with the $(f_s,\phi)$-graph determined by $\tilde\tau.$ From this fact
  and the density of $\Sigma_0$ in $\Sigma,$ we conclude that
  $\Sigma$ coincides with the embedded $H_r$-hypersurface obtained in Theorem \ref{th-Hr-horospheretype},
  as we wished to prove.
\end{proof}

Given a totally geodesic hyperplane $\mathscr E_0$ of $\h^n,$ let us recall that there exists
a family $\mathscr F:=\{\mathscr E_s\,;\, s\in(-\infty, +\infty)\}$ of parallel hypersurfaces of $\h^n$
such that the distance of any point of $\mathscr E_s$ to $\mathscr E_0$ is $|s|.$
The family $\mathscr F$ foliates $\h^n,$
and each member $\mathscr E_s$  of $\mathscr F$, which is called an \emph{equidistant hypersurface},
is properly embedded and homeomorphic to $\R^{n-1}$ (Fig. \ref{fig-equidistants}).

\begin{figure}[htbp]
\includegraphics{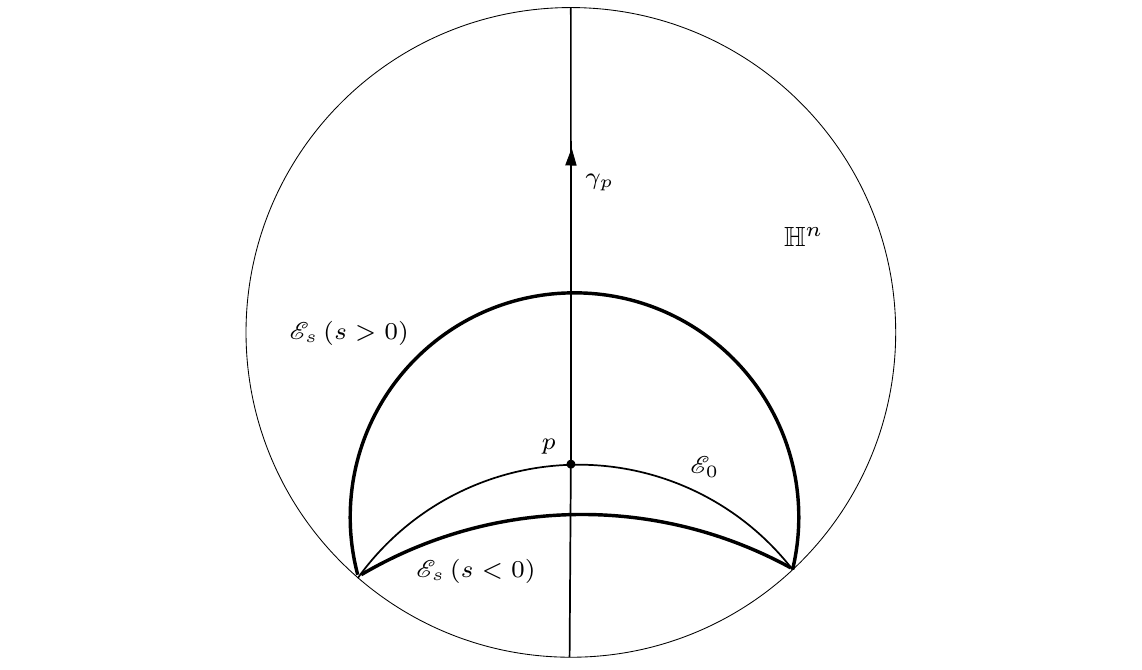}
\caption{\small Equidistant hypersurfaces in the Poincaré ball model of $\h^n.$}
\label{fig-equidistants}
\end{figure}

We shall also write $\mathscr F$ as a family of immersions:
\[
\mathscr F=\{f_s:\R^{n-1}\rightarrow\h^n\,;\, s\in(-\infty,+\infty)\},
\]
that is, for each $s\in (-\infty,+\infty),$ $f_s(\R^{n-1})$ is the equidistant $\mathscr E_s$ to
$\mathscr E_0=f_0(\R^{n-1})$\,.

Given a geodesic $\gamma_p$ orthogonal to the members of $\mathscr F,$ $p\in\mathscr E_0,$
any equidistant hypersurface $\mathscr E_s$ is totally umbilical with constant principal curvatures all equal
to
\[
k^s=-\tanh(s)
\]
with respect to the unit normal $\eta_s=\gamma_p'$ (see Section \ref{sec-Hrgraphs}).
In particular, $\mathscr F$ is isoparametric. %We also remark that, for all $s\ne 0,$ one has $0<|k^s|<1.$
Also, given a constant $H_r$\,, the coefficients $a$ and $b$ of
the  differential equation $y'=ay+b$ associated to $\mathscr F$ and $H_r$ are:
\begin{equation} \label{eq-a&bequidistant}
a(s)=-(n-r)\tanh(s)\quad\text{and}\quad b(s)=b_r\tanh^{1-r}(s), \,\,\,  b_r=rH_r{{n-1}\choose{r-1}}^{-1}.
\end{equation}

It will be convenient to reconsider the  constant
$
C_r:=\frac{n-r}{n}{{n}\choose{r}}
$
and recall  that, for $1\le r<n,$  the following identity holds:
\begin{equation} \label{eq-relation}
\frac{b_r}{n-r}=\frac{H_r}{C_r}\,\cdot
\end{equation}

Our next result establishes that, for $H_r\in(0, C_r)$, $1\le r<n,$ there exists
a one-parameter family of properly embedded $H_r$-hypersurfaces in $\h^n\times\R$ which are foliated by
(vertical translations of) parallel equidistant hypersurfaces of $\h^n.$
In this setting, we have $0<H_r/C_r<1,$ so we can define:
\begin{equation} \label{eq-sr}
s_r:={\rm arctanh}\,(H_r/C_r)^{1/r}.
\end{equation}

\begin{theorem}  \label{th-Hr-equidistanttype}
Given  $r\in\{1,\dots ,n-1\}$, let $H_r\in (0,C_r).$
Then, there exists  a one parameter family
$\mathscr S:=\{\Sigma(\lambda)\,;\, \lambda\in(s_r,+\infty)\}$
of properly embedded and
everywhere non convex $H_r$-hypersurfaces of \,$\h^n\times\R.$  Each member
$\Sigma(\lambda)$ of $\mathscr S$ is homeomorphic to
$\R^n$ and is foliated by equidistant hypersurfaces. Moreover, $\Sigma(\lambda)$
is symmetric with respect to %the horizontal hyperplane
$\h^n\times\{0\}$, and its height function is unbounded. % above and below.
\end{theorem}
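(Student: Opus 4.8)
The plan is to follow the scheme of Theorem~\ref{th-Hr-horospheretype}, now with the isoparametric family $\mathscr F=\{f_s:\R^{n-1}\rightarrow\h^n\,;\,s\in(-\infty,+\infty)\}$ of parallel equidistant hypersurfaces and the ODE $y'=ay+b$ with coefficients $a,b$ as in \eqref{eq-a&bequidistant}. Fix $\lambda\in(s_r,+\infty)$ and let $\tau$ be the solution of $y'=ay+b$ on $[\lambda,+\infty)$ (globally defined, since $a,b$ are continuous on $(0,+\infty)$) with $\tau(\lambda)=1$. The first step is the bookkeeping $\tau'(\lambda)=a(\lambda)+b(\lambda)=-(n-r)\tanh\lambda+b_r\tanh^{1-r}\lambda$, which by \eqref{eq-relation} and \eqref{eq-sr} is negative precisely because $\tanh^r\lambda>H_r/C_r$, i.e.\ because $\lambda>s_r$.

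The second step is the ODE analysis, modeled on Proposition~\ref{pro-limttauR}. I would show $\tau$ is strictly decreasing on $[\lambda,+\infty)$: a first critical point $s_1>\lambda$ would have to be a local maximum, since differentiating $\tau'=a\tau+b$ gives $\tau''(s_1)=a'(s_1)\tau(s_1)+b'(s_1)<0$ (here $a'(s)=-(n-r)/\cosh^2 s<0$, $\tau(s_1)>0$, and $b'\le 0$ because $r\ge 1$), contradicting that $\tau$ decreases on $(\lambda,s_1)$. A decreasing positive solution converges to some $L\ge 0$; letting $s\to+\infty$ in $\tau'=a\tau+b$ (with $a\to-(n-r)$, $b\to b_r$) forces $L=b_r/(n-r)=H_r/C_r\in(0,1)$. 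Hence $0<H_r/C_r<\tau<1$ on $(\lambda,+\infty)$, so Lemma~\ref{lem-parallel} applies and the $(f_s,\phi)$-graph $\Sigma'(\lambda)$ with $\rho$-function $\rho=\sqrt[r]{\tau}$ is an $H_r$-hypersurface of $\h^n\times\R$, with $\phi(s)=\int_{\lambda}^{s}\rho(u)(1-\rho^2(u))^{-1/2}du$. Since $\rho(\lambda)=1$ and $\rho'(\lambda)=\tfrac1r\tau'(\lambda)<0$, Lemma~\ref{lem-convergenceintegral}(ii) shows $\phi$ is finite at $\lambda$; and since $\rho\ge(H_r/C_r)^{1/r}>0$, one gets $\phi(s)\ge(H_r/C_r)^{1/r}(s-\lambda)$, so $\phi$ is unbounded above.

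The third step is the gluing, carried out verbatim as in the earlier theorems. The graph $\Sigma'(\lambda)$ lies over the region of $\h^n$ foliated by $\{\mathscr E_s:s\ge\lambda\}$, with boundary $\mathscr E_\lambda\times\{0\}$; as $\rho(\lambda)=1$, its tangent spaces along the boundary are vertical and the trajectories of $\nabla\xi$ (which are geodesics) meet the boundary orthogonally. Reflecting $\Sigma'(\lambda)$ across $\h^n\times\{0\}$ and taking the union of the closures yields a $C^\infty$, properly embedded $H_r$-hypersurface $\Sigma(\lambda)$, foliated by equidistant hypersurfaces, homeomorphic to $\R^n$ (two closed half-spaces glued along $\R^{n-1}$), symmetric with respect to $\h^n\times\{0\}$, and with height function unbounded above and below. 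For the non-convexity, \eqref{eq-principalcurvatures} gives on $\Sigma'(\lambda)$ that $k_i=-\rho k_i^s=\rho\tanh(s)>0$ for $1\le i\le n-1$ while $k_n=\rho'<0$ (as $\tau$ decreases), and at the waist $\mathscr E_\lambda\times\{0\}$ one has $k_i=\tanh\lambda>0$ and $k_n=\tfrac1r\tau'(\lambda)<0$; by the reflective symmetry the same holds on the reflected half, so principal curvatures of both signs occur at every point. Distinct values of $\lambda$ give genuinely distinct $\Sigma(\lambda)$, since the waist $\mathscr E_\lambda$ has principal curvature $-\tanh\lambda$.

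The only slightly delicate points are the identification of the inequality $\tau'(\lambda)<0$ with the condition $\lambda>s_r$ (pure bookkeeping with $b_r$, $C_r$, $s_r$) and the monotonicity-plus-limit argument for $\tau$; everything else is essentially identical to the horosphere construction of Theorem~\ref{th-Hr-horospheretype} and to the Delaunay-type constructions. As there, the smoothness of $\Sigma(\lambda)$ at the waist rests on the trajectories of $\nabla\xi$ being geodesics, which is what makes $k_n$ well defined across $\mathscr E_\lambda\times\{0\}$.
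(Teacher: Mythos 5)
Your proposal is correct and follows essentially the same route as the paper's proof: the same identification of $\tau'(\lambda)<0$ with $\lambda>s_r$ via \eqref{eq-relation}, the same concavity-at-a-critical-point argument for monotonicity of $\tau$, the same limit $\rho\to(H_r/C_r)^{1/r}$ giving unboundedness of $\phi$, and the same reflection/gluing and sign analysis of \eqref{eq-principalcurvatures} for non-convexity. The only cosmetic difference is that you assert positivity of $\tau$ when invoking convergence, whereas the paper notes explicitly that $\tau\le 0$ at some point would force $\tau'=a\tau+b>0$ there, contradicting monotonicity; this one-line observation is all that is missing.
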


\begin{proof}
 Let $\mathscr F:=\{f_s\,;\, s\in(0,+\infty)\}$ be a family of parallel
 equidistant hypersurfaces of $\h^n.$ Since $0<H_r<C_r$\,, it  follows from the relation \eqref{eq-relation} that
 $0<b_r/(n-r)<1.$ Thus, we can choose $\lambda >0$ such that
\begin{equation} \label{eq-inequalitytanh}
\tanh^r(\lambda)>\frac{b_r}{n-r}\,\cdot
\end{equation}
In particular, $\lambda\in(s_r,+\infty).$

Let $\tau_\lambda$ be the solution of $y'=ay+b$ satisfying $y(\lambda)=1,$
where $a$ and $b$ are the functions in \eqref{eq-a&bequidistant}.
Then, from \eqref{eq-inequalitytanh}, we have
\[
\tau_\lambda'(\lambda)=-(n-r)\tanh(\lambda)+b_r\tanh^{1-r}(\lambda)<0,
\]
so that $\tau_\lambda$ is decreasing near $\lambda$\,.

We claim that $\tau_\lambda$ is decreasing on the whole interval $[\lambda,+\infty)$.
To show that, it suffices to prove that $\tau_\lambda$ has no critical points in $(\lambda,+\infty).$
Assuming otherwise, consider $s_1>\lambda$ satisfying $\tau_\lambda'(s_1)=0.$
Since $\tau_\lambda'(s_1)=a(s_1)\tau_\lambda(s_1)+b(s_1),$ we have that
$\tau_\lambda(s_1)=-b(s_1)/a(s_1)>0.$ We also have  $a'<0$ and $b'\le 0.$ Thus,
\[
\tau_\lambda''(s_1)=a'(s_1)\tau_\lambda(s_1)+b'(s_1)<0,
\]
which implies that $s_1$ is necessarily a local maximum for $\tau_\lambda.$
This proves the claim, for $\tau_\lambda$ is decreasing near $\lambda,$ so that
a local maximum $s_1>\lambda$ for $\tau_\lambda$ should be preceded by a local minimum.

We also have that  $\tau_\lambda$ is positive in $[\lambda,+\infty).$ Indeed,
if we had $\tau_\lambda(s)\le 0$ for some $s>\lambda,$  it would give $\tau_\lambda'(s)=a(s)\tau(s)+b(s)>0,$
and then $\tau_\lambda$ would be increasing near $s.$

It follows from the above considerations that
\[
0<\tau_\lambda(s)\le 1=\tau_\lambda(\lambda) \,\,\, \forall s\in [\lambda,+\infty).
\]
Furthermore, since $\tau_\lambda$ is decreasing and positive, we have that
$\tau_\lambda'(s)\rightarrow 0$ as $s\rightarrow +\infty.$ This, together with
the equalities $\tau_\lambda'=a\tau_\lambda+b$ and $\rho_\lambda^r=\tau_\lambda$, gives
\begin{equation} \label{eq-limit}
\lim_{s\rightarrow+\infty}\rho_\lambda(s)=\left({H_r}/{C_r}\right)^{1/r}>0.
\end{equation}

Therefore, the $(f_s,\phi)$ graph $\Sigma'(\lambda)$ associated
to $\rho_\lambda$ (see Fig. \ref{fig-equidistant})
is an $H_r$-hypersurface of $\h^n\times\R$ whose $\phi$-function  is
\[
\phi_\lambda(s)=\int_{\lambda}^{s}\frac{\rho_\lambda(u)}{\sqrt{1-\rho_\lambda^2(u)}}du, \,\,\, s\in [\lambda,+\infty).
\]

\begin{figure}[htbp]
\includegraphics{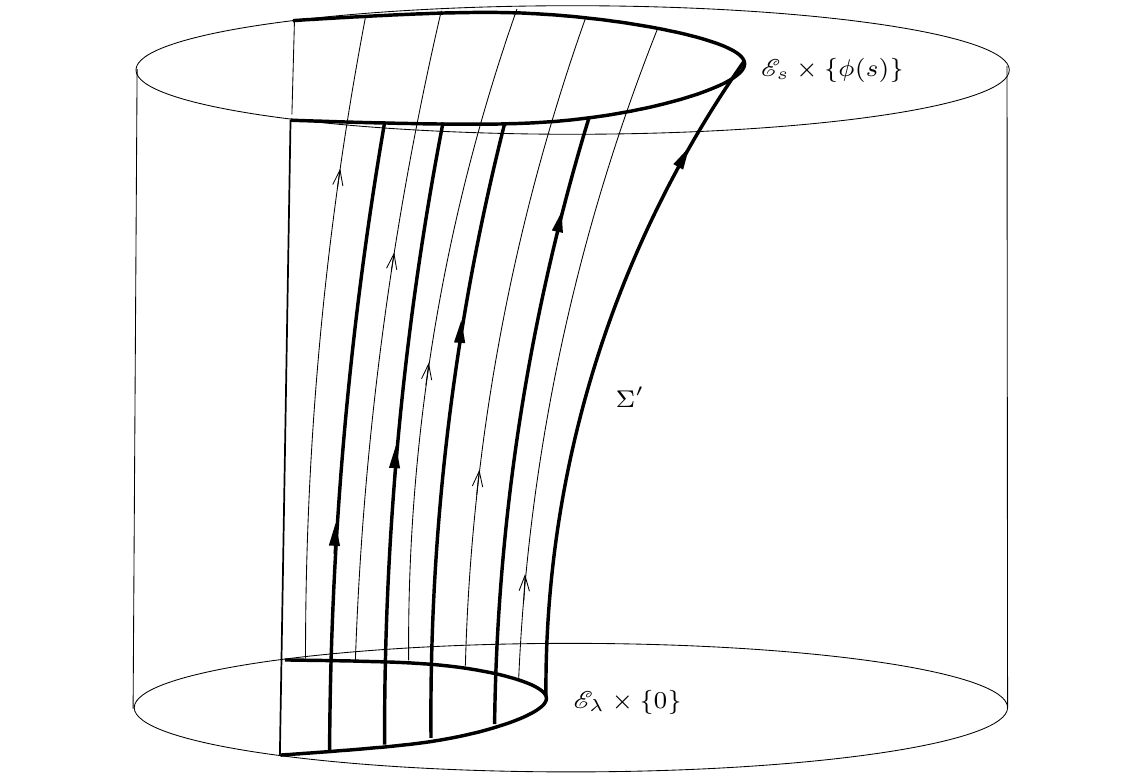}
\caption{\small A piece of the graph $\Sigma'$, on which  all  trajectories of  $\nabla\xi$
emanate from $\mathscr E_{\lambda}\times\{0\}$ orthogonally.}
\label{fig-equidistant}
\end{figure}

As in the preceding proofs, we obtain a properly embedded $H_r$-hypersurface
$\Sigma(\lambda)$ of $\h^n\times\R$ by reflecting $\Sigma'(\lambda)$ with respect to $\h^n\times\{0\},$
since $\rho_\lambda(\lambda)=1$ and $\rho_\lambda'(\lambda)<0$.
It is also clear from equalities \eqref{eq-principalcurvatures} that,
except for $k_n=\rho'$, its principal curvatures  $k_i$ are all positive, so that
$\Sigma(\lambda)$ is nowhere convex.

Finally, considering \eqref{eq-limit} and the fact that $\rho_\lambda$ is decreasing, we have
\[
\phi_\lambda(s)=\int_{\lambda}^{s}\frac{\rho_\lambda(u)}{\sqrt{1-\rho_\lambda^2(u)}}du\ge \int_{\lambda}^{s}\rho_\lambda(u)du\ge \left({H_r}/{C_r}\right)^{1/r}(s-\lambda),
\]
which clearly implies that the height function of $\Sigma(\lambda)$ is unbounded.
\end{proof}

%%%%%%%%%%%%%%%%%%%%%%%%%%%%%%%%%%%%%%%%%%%%%

\begin{theorem} \label{th-graphequidistants}
Let $\mathscr F:=\{f_s\,;\, s\in(0,+\infty)\}$ be a family of equidistant
hypersurfaces in $\h^n.$
Given $r\in\{1,\dots ,n-1\}$ and $H_r\in (0, C_{r}),$ let $s_r>0$ be the constant defined in
\eqref{eq-sr}. Then,
there exists a complete everywhere non convex  $H_r$-hypersurface $\Sigma$ in $\h^n\times\R$ which
is an $(f_s,\phi)$-graph, $s\in (s_r,+\infty).$  Furthermore,
the height function of $\Sigma$ is unbounded above and below, and $\Sigma$ is
asymptotic to $\mathscr E_{s_r}\times(-\infty, 0).$ %in $\h^n\times(-\infty, 0).$
\end{theorem}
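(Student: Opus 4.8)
The plan is to extract from the ODE $y'=ay+b$ of \eqref{eq-a&bequidistant} the single ``critical'' solution whose $\rho$-function equals $1$ exactly at the endpoint $s_r$, rather than at an interior point $\lambda>s_r$ as in Theorem~\ref{th-Hr-equidistanttype}. Since $a,b$ are smooth on $(0,+\infty)$, let $\tau$ be the unique solution of $y'=ay+b$ with $\tau(s_r)=1$. By the definition \eqref{eq-sr} of $s_r$ together with \eqref{eq-relation} one has $\tanh^r(s_r)=H_r/C_r=b_r/(n-r)$, so $a(s_r)\cdot 1+b(s_r)=\tanh^{1-r}(s_r)\big(b_r-(n-r)\tanh^r(s_r)\big)=0$, i.e.\ $\tau'(s_r)=0$; and $\tau''(s_r)=a'(s_r)+b'(s_r)<0$ because $a'<0$ on $(0,+\infty)$ and $b'\le 0$ (with $b'\equiv 0$ when $r=1$). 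Hence $s_r$ is a strict local maximum of $\tau$, so $\tau$ decreases on a right neighbourhood of $s_r$. First I would repeat \emph{verbatim} the monotonicity argument of Theorem~\ref{th-Hr-equidistanttype}: a critical point $s_1>s_r$ of $\tau$ would satisfy $\tau(s_1)=-b(s_1)/a(s_1)>0$ and $\tau''(s_1)=a'(s_1)\tau(s_1)+b'(s_1)<0$, hence be a local maximum --- impossible immediately after the strict maximum at $s_r$ --- so $\tau$ has no critical point in $(s_r,+\infty)$ and is strictly decreasing there, and it stays positive (if $\tau(s)\le 0$ then $\tau'(s)=a(s)\tau(s)+b(s)>0$, a contradiction). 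Thus $0<\tau(s)<1$ on $(s_r,+\infty)$ with $\lim_{s\to s_r^+}\tau(s)=1$; being monotone and bounded, $\tau$ converges as $s\to+\infty$, and letting $s\to+\infty$ in $\tau'=a\tau+b$ (where $a(s)\to-(n-r)$, $b(s)\to b_r$) forces $\lim_{s\to+\infty}\tau(s)=b_r/(n-r)=H_r/C_r\in(0,1)$.

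Now set $\rho:=\tau^{1/r}$ on $(s_r,+\infty)$. By Lemma~\ref{lem-parallel}, the $(f_s,\phi)$-graph $\Sigma$ with $\rho$-function $\rho$ and $\phi(s)=\int_{s_0}^{s}\rho(u)\,(1-\rho^2(u))^{-1/2}\,du$ for a fixed $s_0\in(s_r,+\infty)$ is an $H_r$-hypersurface of $\h^n\times\R$, defined over the family $\{f_s\,;\,s\in(s_r,+\infty)\}$. Since $\rho$ is decreasing with $\rho(s)\to(H_r/C_r)^{1/r}\in(0,1)$ as $s\to+\infty$, one has $\phi'(s)=\rho(s)(1-\rho^2(s))^{-1/2}\ge c>0$ for large $s$, so $\phi(s)\to+\infty$. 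At the other end $\rho(s)\to 1$ as $s\to s_r^+$, and here $\rho'(s_r^+)=\tfrac1r\tau'(s_r)=0$ while $\rho''(s_r^+)=\tfrac1r\tau''(s_r)<0$; hence $1-\rho^2$ vanishes \emph{quadratically} at $s_r$, so $\rho(u)(1-\rho^2(u))^{-1/2}\sim C/(u-s_r)$ and the improper integral $\int_{s_r}\rho(u)(1-\rho^2(u))^{-1/2}\,du$ diverges. (This is exactly why Lemma~\ref{lem-convergenceintegral} does not apply: its hypothesis is $\rho'(s_r^+)<0$, not $=0$.) Consequently $\phi(s)\to-\infty$ as $s\to s_r^+$, so the height function of $\Sigma$ is unbounded above and below. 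For completeness I would use Hopf--Rinow: the orthogonal trajectories of $\nabla\xi$ on $\Sigma$ are, up to reparametrisation, geodesics, with arclength $\int\sqrt{1+(\phi')^2}\,ds$ that is infinite on both ends of $(s_r,+\infty)$ (since $\phi'\to+\infty$ at $s_r^+$ and $\phi'\to c>0$ at $+\infty$), while each horizontal section of $\Sigma$ is an isometric copy of the complete equidistant $\mathscr E_s$; a divergent curve in $\Sigma$ either has unbounded $s$-coordinate, and then infinite length from the $\sqrt{1+(\phi')^2}\,ds$ term, or stays over a compact $s$-interval and then diverges inside a (uniformly comparable) complete $\mathscr E_s$, hence again has infinite length.

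It remains to read off the last two assertions. By \eqref{eq-principalcurvatures} and $k_i^s=-\tanh s<0$, the principal curvatures of $\Sigma$ are $k_i=\rho\tanh s>0$ for $1\le i\le n-1$ and $k_n=\rho'<0$; since these never vanish and have opposite signs, $\Sigma$ is convex at no point. Finally, as $s\to s_r^+$ one has $\phi(s)\to-\infty$ and $f_s\to f_{s_r}$ uniformly on compact subsets of $\R^{n-1}$ (the parallel family $\mathscr F$ being continuous), so the lower end of $\Sigma$ is asymptotic to the vertical cylinder $\mathscr E_{s_r}\times\R$; adjusting the additive constant in $\phi$ (equivalently, translating $\Sigma$ vertically) we may take this cylinder to be $\mathscr E_{s_r}\times(-\infty,0)$. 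The step I expect to be the main obstacle is precisely this lower-end analysis: establishing rigorously that $\phi(s)\to-\infty$ as $s\to s_r^+$ from the quadratic vanishing of $1-\rho^2$ at $s_r$ (this is the feature that makes the critical solution genuinely different from the $\tau_\lambda$ of Theorem~\ref{th-Hr-equidistanttype}), and then deducing completeness and the asymptotic cylinder statement from it; the remaining ODE bookkeeping is a routine repetition of Theorem~\ref{th-Hr-equidistanttype}.
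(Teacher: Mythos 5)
Your proposal is correct and follows essentially the same route as the paper: the critical solution $\tau$ with $\tau(s_r)=1$, $\tau'(s_r)=0$, $\tau''(s_r)<0$, the monotonicity and positivity argument repeated from Theorem \ref{th-Hr-equidistanttype}, the limit $\rho\to(H_r/C_r)^{1/r}$ giving unboundedness above, and the divergence of $\phi$ at $s_r$ forced by $\rho'(s_r)=0$ (the paper gets this by bounding $1/\rho'<-C$ near $s_r$ and substituting, which is the same mechanism as your quadratic-vanishing asymptotic of $1-\rho^2$). Your added Hopf--Rinow completeness check and the explicit identification of why Lemma \ref{lem-convergenceintegral} fails here are sound refinements of details the paper leaves implicit.
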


\begin{proof}
  Let $\tau$ be the solution of the differential equation
  $y'=ay+b$ associated to $H_r$ and $\mathscr F$ (i.e., with $a$ and $b$ as
  in \eqref{eq-a&bequidistant}) which satisfies the initial condition
  $\tau(s_r)=1.$

 From its definition,  we have that $s_r$ satisfies $\tau'(s_r)=0.$ In addition,
  \[
  \tau''(s_r)=a'(s_r)+b'(s_r)<0,
  \]
  so that $s_r$ is a local  maximum of $\tau.$ Reasoning  as in the preceding proof, we get
  that $\tau$,  and so $\rho=\tau^{1/r}$, is positive and decreasing in $(s_r,+\infty).$
  From this, we conclude analogously that $\rho(s)\rightarrow (H_r/C_r)^{1/r}$ as
  $s\rightarrow+\infty.$

  Now, for a fixed $s_0>s_r$\,, define
  \[
  \phi(s):=\int_{s_0}^{s}\frac{\rho(u)}{\sqrt{1-\rho^2(u)}}du, \,\,\, s\in (s_r,+\infty),
  \]
  and let $\Sigma$ be the corresponding $(f_s,\phi)$-graph. As before, we have that
  $\Sigma$ is nowhere convex. Denoting by $\Omega_{s_r}$ the convex connected component of
  $\h^n -\mathscr E_{s_r},$ we also have that $\Sigma$ is a graph over $\h^n-\Omega_{s_r}$
  (Fig. \ref{fig-equidistant02}).

  For $s>s_0$\,, we have
  \[
  \phi(s)=\int_{s_0}^{s}\frac{\rho(u)}{\sqrt{1-\rho^2(u)}}du\ge \int_{s_0}^{s}\rho(u)du\ge (H_r/C_r)^{1/r}(s-s_0),
  \]
  which gives that $\phi$, and so the height function of $\Sigma,$ is unbounded above.

  Finally, given a constant $C>0,$ there exists $\bar s\in (s_r, s_0)$ such that
  \[
  \frac{1}{\rho'(s)}<-C \,\,\,\forall s\in(s_r,\bar s),
  \]
  for $\rho'(s_r)=0.$ Thus, for such values of $s,$ one has
  \begin{eqnarray}
  \phi(s) &= &\int_{s_0}^{s}\frac{\rho'(u)\rho(u)}{\rho'(u)\sqrt{1-\rho^2(u)}}du \le
  -C\rho(\bar s)\int_{s_0}^{s}\frac{\rho'(u)}{\sqrt{1-\rho^2(u)}}du \nonumber \\
  & = & -C\rho(\bar s)\int_{\rho(s_0)}^{\rho(s)}\frac{d\rho}{\sqrt{1-\rho^2}}= -C\rho(\bar s)(\arcsin\rho(s)-\arcsin\rho(s_0))\nonumber\\
  & \le & -C\rho(\bar s)(\arcsin\rho(\bar s)-\arcsin\rho(s_0)), \nonumber
  \end{eqnarray}
  which implies that $\phi(s)\rightarrow-\infty$ as $s\rightarrow s_r$\,, since
  $\rho(\bar s)$ is bounded away from zero. Therefore, the height
  function of $\Sigma$ is unbounded below, and $\Sigma$ is
  asymptotic to $\mathscr E_{s_r}\times(-\infty, 0)$ in $\h^n\times(-\infty, 0),$
  as we wished to prove.
  \end{proof}

\begin{figure}[htbp]
\includegraphics{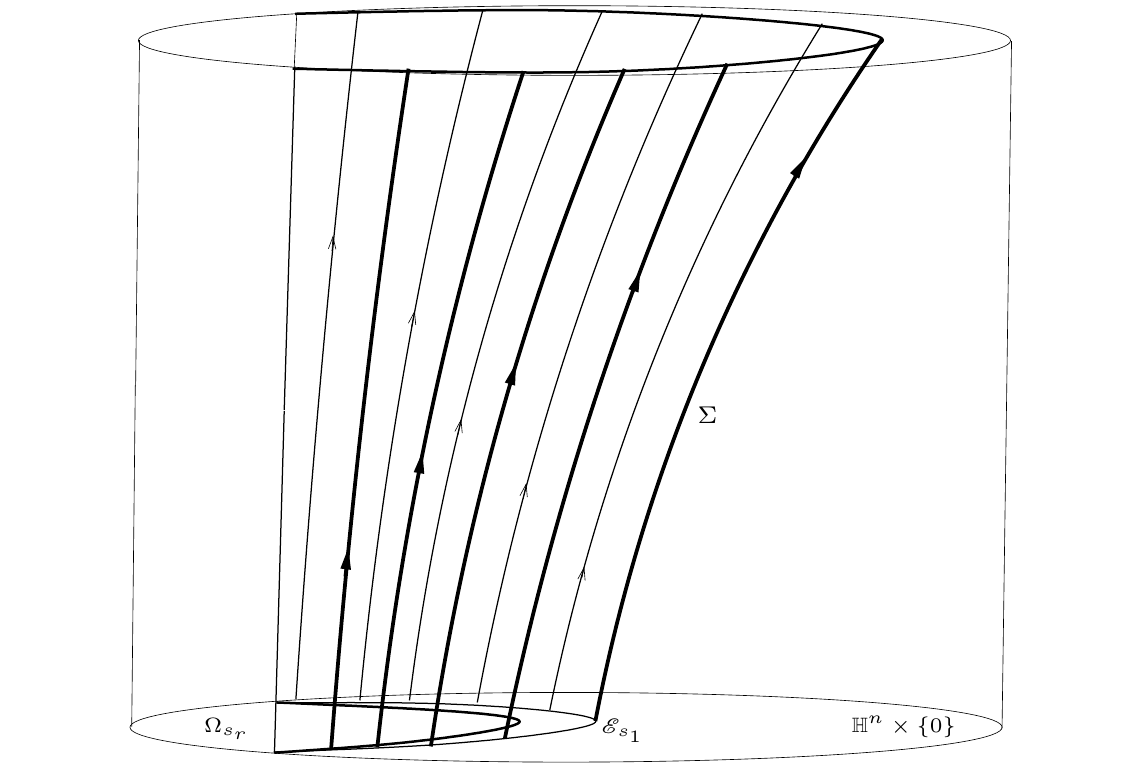}
\caption{\small A piece of the $(f_s,\phi)$-graph $\Sigma$ which is above $\h^n\times\{0\}.$  As $s\rightarrow-\infty$,
the  trajectories of  $\nabla\xi$ converge asymptotically to   $\mathscr E_{s_r}\times (0,-\infty).$}
\label{fig-equidistant02}
\end{figure}

A family $\mathscr F=\{\mathscr E_s\,;\, s\in(-\infty,+\infty)$ of equidistant hypersurfaces in
$\h^n$  determines a group of translational isometries which we shall call $\mathscr F$-\emph{hyperbolic}. In the upper half-space model of
$\h^n,$ taking $\mathscr E_0$ as a Euclidean half vertical hyperplane orthogonal to $\partial_\infty\h^n$
through  the ``origin'' $o,$ we have
that the $\mathscr F$-hyperbolic isometries are the Euclidean homotheties from $o.$
It should be noticed that the equidistant hypersurfaces
of $\mathscr F$ are all invariant by $\mathscr F$-hyperbolic isometries.

The natural extension of an $\mathscr F$-hyperbolic isometry
of $\h^n$ to $\h^n\times\R$ will  also be called
$\mathscr F$-\emph{hyperbolic}.
If $\Sigma$ is a hypersurface of $\h^n\times\R$ which is invariant by
$\mathscr F$-hyperbolic isometries, it is clear that any connected component of
any horizontal section $\Sigma_t$ of $\Sigma$ is contained in
$\mathscr E_s\times\{t\}$ for some $s\in(-\infty,+\infty).$

Next, we classify $H_r(>0)$-hypersurfaces of $\h^n\times\R$
(without horizontal points or totally geodesic horizontal sections)
which are invariant by hyperbolic translations.

%%%%%%%%%%%%%%%%%%%%%%%%%%%%%%%%%%%%%%%%%%%%%

\begin{theorem} \label{th-nonexistenceequidistant}
Let $\mathscr F=\{f_s \,;\, s\in(-\infty,+\infty)\}$ be a family of parallel equidistant hypersurfaces
in $\h^n.$ Assume that, for some $r\in\{1,\dots, n\},$  $\Sigma$ is a complete connected $H_r(>0)$-hypersurface
of \,$\h^n\times\R$ which is invariant by  $\mathscr F$-hyperbolic translations. Assume further that
$\Sigma$ has no horizontal points, and that no  horizontal section $\Sigma_t$ of $\Sigma$
is totally geodesic in $\h^n\times\{t\}$ (i.e., $\Sigma_t\not\subset\mathscr E_0=f_0(\R^n)).$
Under these conditions,  the following  assertions hold:
\begin{itemize}[parsep=1ex]
  \item [\rm i)] $r<n$.
  \item [\rm ii)]  $0<H_r<C_r$\,.
  \item [\rm iii)] $\Sigma$ is either the cylinder over the equidistant \,$\mathscr E_{s_r}$
  or, up to an ambient isometry, one of the embedded hypersurfaces obtained
  in Theorems \ref{th-Hr-equidistanttype}--\ref{th-graphequidistants}.
\end{itemize}
\end{theorem}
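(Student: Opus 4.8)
The plan is to adapt the argument used for parallel horospheres (Theorems~\ref{th-nonexistencehorospheres} and~\ref{th-classificationhorospheres}), with the crucial difference that the hypersurfaces of Theorems~\ref{th-Hr-equidistanttype}--\ref{th-graphequidistants} are nowhere convex, so the Maximum-Continuation Principle is unavailable; instead I would extract uniqueness, as in the $r$-minimal case, from uniqueness of solutions of the linear ODE $y'=ay+b$, connectedness of $\Sigma$, and density of the set $\Sigma_0:=\{x\in\Sigma:\theta(x)\neq 0\}$. First, if $\Sigma_0=\emptyset$, then $\partial_t\in T\Sigma$ everywhere and, since the $\mathscr F$-hyperbolic isometries act transitively on each $\mathscr E_s$ while $\Sigma$ is a connected hypersurface, $\Sigma=\mathscr E_{s_*}\times\R$ for some $s_*$. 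Comparing $r$-th mean curvatures through \eqref{eq-principalcurvatures} gives $H_r=|H_r^{s_*}|={{n-1}\choose{r}}\tanh^r|s_*|$ when $r<n$ and $H_r=0$ when $r=n$; since $H_r>0$ and ${{n-1}\choose{r}}=C_r$, this forces $r<n$, $0<H_r<C_r$, and $|s_*|=s_r$ (see \eqref{eq-relation}, \eqref{eq-sr}). A reflection of $\h^n\times\R$ through $\mathscr E_0\times\R$ then exhibits $\Sigma$ as the cylinder over $\mathscr E_{s_r}$, settling (i)--(iii) in this case.

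Assume now $\Sigma_0\neq\emptyset$ and fix $x_0\in\Sigma_0$. As in the previous proofs, since $\Sigma$ has no horizontal points and is $\mathscr F$-hyperbolic invariant, after reflecting in a horizontal hyperplane and (if needed) in $\mathscr E_0\times\R$ a neighborhood of $x_0$ is an $(f_s,\phi)$-graph with $f_s\in\mathscr F$ and $s>0$; by Lemma~\ref{lem-parallel} its $\tau$-function solves $y'=ay+b$ with $a,b$ as in \eqref{eq-a&bequidistant}, hence is defined on all of $(0,+\infty)$, with a maximal subinterval $(s_0,s_1)$ on which $0<\tau<1$. The key local step is to pin down $s_0$. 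If $s_0>0$, maximality gives $\tau(s_0)\in\{0,1\}$, and $\tau(s_0)=0$ would force $\rho=\tau^{1/r}$, hence $\phi'$, to vanish at $s_0$, i.e. a horizontal point of $\Sigma$ (by completeness the closure of the graph lies in $\Sigma$), so $\tau(s_0)=1$. And $s_0=0$ is impossible: for $r\ge 2$, $b(s)\sim b_r s^{1-r}$ is not integrable near $0$ and $\tau$ cannot stay positive; while in any case, since $\rho$ extends continuously to $s=0$, the closure of the graph would contain $\mathscr E_0\times\{\phi(0)\}$, and as the profile is a graph over the height this would make the horizontal section $\Sigma_{\phi(0)}=\mathscr E_0\times\{\phi(0)\}$ totally geodesic, against hypothesis. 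So $s_0\in(0,+\infty)$ and $\tau(s_0)=1$. Next, with the nullcline $N(s):=-b(s)/a(s)=\frac{H_r}{C_r}\coth^r(s)$ (for $r<n$, strictly decreasing from $+\infty$ to $H_r/C_r$), one checks as in the proof of Theorem~\ref{th-Hr-equidistanttype} that $\tau''=a'\tau+b'<0$ at any critical point of $\tau$; hence $\tau$ has no interior local minimum, so $s_1=+\infty$, $\tau$ is strictly decreasing on $(s_0,+\infty)$ with $\tau\to H_r/C_r$, and from $\tau(s_0)=1>\tau(s)$ for $s>s_0$ we get $\tau'(s_0)\le 0$, i.e. $1\ge N(s_0)$, i.e. $s_0\ge s_r$. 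In particular $H_r/C_r=\tanh^r s_r\le\tanh^r s_0<1$, proving (ii); and running the same endpoint analysis with $r=n$ (where $a\equiv 0$, $\tau'=b>0$, so $\tau$ increases and $\tau(s_0)=0$ at some $s_0>0$) produces a horizontal point --- a contradiction, so $r<n$, proving (i).

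For (iii), I would split according to whether $s_0=s_r$ or $s_0>s_r$. If $s_0=s_r$ then $\tau(s_r)=1=N(s_r)$ and $\tau'(s_r)=0$, so by uniqueness for the linear ODE $\tau$ is exactly the solution used in Theorem~\ref{th-graphequidistants}; if $s_0=\lambda>s_r$ then $\tau(\lambda)=1$ and $\tau'(\lambda)=a(\lambda)+b(\lambda)<0$ (since $N(\lambda)<1$), so $\tau=\tau_\lambda$ as in Theorem~\ref{th-Hr-equidistanttype}. Thus the local graph at $x_0$ is an open piece of a single hypersurface $\Sigma^{*}$ among those of Theorems~\ref{th-Hr-equidistanttype}--\ref{th-graphequidistants}; as in the proofs of those theorems, distinct such hypersurfaces are pairwise non-tangent, so $\Sigma^{*}$ is independent of $x_0$ and $\Sigma_0\subset\Sigma^{*}$. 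To conclude $\Sigma=\Sigma^{*}$ I would show $\Sigma_0$ is dense in $\Sigma$: a nonempty vertical open subset of $\Sigma$ must lie in some $\mathscr E_{s_*}\times\R$ with necessarily $s_*=s_r$, and a $C^\infty$ matching with $\Sigma^{*}$ would force $\Sigma^{*}$ to be vertical along a geodesic sphere of radius $s_r$ it meets, which never happens ($\Sigma(\lambda)$ is vertical only at $s=\lambda>s_r$, and the hypersurface of Theorem~\ref{th-graphequidistants} only asymptotically as $s\to s_r^{+}$). Hence $\Sigma\subset\overline{\Sigma_0}\subset\Sigma^{*}$, and since $\Sigma$ is complete and connected of the same dimension as the connected $\Sigma^{*}$, equality follows.

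The hard part will be the endpoint analysis of $\tau$ at $s_0$ --- establishing $s_0>0$ (where both the absence of totally geodesic horizontal sections and the non-integrability of $b$ near $0$ are used) and $\tau(s_0)=1$ rather than $0$ (the absence of horizontal points) --- together with the density step ruling out mixed graph--cylinder configurations; the rigidity needed to glue the local graphs into one global $\Sigma^{*}$ should be routine, given ODE uniqueness and the explicit description of the examples.
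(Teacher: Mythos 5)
Your proposal is correct and follows essentially the same route as the paper's proof: the same dichotomy on $\Sigma_0=\{\theta\ne 0\}$, the same maximal-interval analysis showing $\tau(s_0)=1$ (ruling out $\tau(s_0)=0$ via a horizontal point and $s_0=0$ via the no-totally-geodesic hypothesis), the same monotonicity argument forcing $r<n$ and $s_0\ge s_r$ (hence $H_r<C_r$), and the same ODE-uniqueness identification with the models of Theorems \ref{th-Hr-equidistanttype} and \ref{th-graphequidistants}. Your explicit density-plus-non-tangency gluing step, correctly noting that the Maximum-Continuation Principle is unavailable for these nowhere convex hypersurfaces, fills in what the paper compresses into ``arguing as in preceding proofs.''
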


\begin{proof}
  Set $\Sigma_0:=\{x\in\Sigma\,;\, \theta(x)\ne 0\}$ and assume $\Sigma_0\ne\emptyset.$  Given $x_0\in\Sigma_0$\,,
  as in previous proofs, we can assume there is an
  open set $\Sigma'\subset\Sigma_0$ which is an $(f_s,\phi)$-graph containing $x_0$\,.
  Its $\tau$ function satisfies
  $\tau'=a\tau+b$, where  $a$ and $b$ are the functions given in \eqref{eq-a&bequidistant}. Also, $\tau$ is
  defined in a maximal interval $(s_0\,, s_1)\subset\R$ such that $0<\tau|_{(s_0,s_1)}<1.$
  Since no horizontal section of $\Sigma$ is totally geodesic, we can assume  $0<s_0<s_1\le+\infty.$

  The maximality of $(s_0,s_1)$ gives that $\tau(s_0)=0$ or $\tau(s_0)=1.$ In the former case,
  we have $\tau'(s_0)=b(s_0)>0.$ Then, $\phi(s_0)$ is well defined (by Lemma \ref{lem-convergenceintegral})
  and $\phi'(s_0)=0,$ so that $x=(f_{s_0}(p),\phi(s_0)),$ $p\in\R^{n-1},$
  is a horizontal point of  $\Sigma$, contrary to our hypothesis.
  Then, we must have $\tau(s_0)=1.$ In particular, near $s_0$\,, $\tau$ is decreasing
  in $(s_0,s_1),$  which implies that $r<n.$ Indeed, for $r=n,$  $\tau'=b>0.$

  Assume now that $H_r\ge C_r$, $r<n.$ Then, we have
  \begin{eqnarray}
  \tau'(s_0) & = & a(s_0)+b(s_0)=-(n-r)\tanh(s_0)+b_r\tanh^{1-r}(s_0)\nonumber\\
             & = & (n-r)((H_r/C_r)\tanh^{1-r}(s_0)-\tanh(s_0))\nonumber\\
             &\ge &  (n-r)(\tanh^{1-r}(s_0)-\tanh(s_0))>0,\nonumber
  \end{eqnarray}
  which contradicts that $\tau$ is decreasing near $s_0$\,.

  It follows from the above considerations that, if $\Sigma_0\ne\emptyset,$ then
  $r<n$ and $H_r<C_r$\,. Furthermore,  a direct computation gives that $\tau'(s_0)\le 0$ if and only
  if $s_0\ge s_r$\,.
  If $s_0=\lambda>s_r$\,,  then $\tau$ coincides with the function $\tau_\lambda$ of the $(f_s,\phi)$-graph
  associated to the hypersurface $\Sigma(\lambda)$ of Theorem \ref{th-Hr-equidistanttype}. From this,
  arguing as in preceding proofs, we conclude that $\Sigma=\Sigma(\lambda).$
  By the same token, if $s_0=s_r,$ then $\Sigma=\Sigma'$ is the complete graph obtained
  in Theorem \ref{th-graphequidistants}.

  Let us suppose now that $\Sigma_0=\emptyset.$  In this case, we must have
  $\Sigma=\mathscr E_s\times\R,$ where $\mathscr E_s=f_s(\R^{n-1})$ is an equidistant hypersurface with $r$-th mean curvature
  $H_r^s=H_r$\,, so that $s=s_r$\,.  Therefore, we have $r<n$ (since we are assuming $H_r>0$) and
  \[
  H_r=|H_r^{s_r}|={{n-1}\choose{r}}\tanh^r(s_r)=C_r \tanh^r(s_r)<C_r\,,
  \]
  which concludes our proof.
  \end{proof}

%%%%%%%%%%%%%%%%%%%%%%%%%%%%%%%%%%%%%%%%%%%%%%%%%%%%%%%%%%%%%%%%%%%%%%%

\section{Translational $r$-minimal Hypersurfaces of $\hfr.$} \label{sec-translationalr-minimal}

In this section, we construct and classify
$r$-minimal hypersurfaces in $\hfr$ which are invariant by translational isometries.
%as we did for $H_r(>0)$-hypersurfaces in the preceding section.
It will be convenient to
consider first the case of  hyperbolic isometries of $\h^n.$

\begin{theorem} \label{th-rminimalequidistant}
Let  $\mathscr F=\{f_s\,;\, s\in(-\infty,+\infty)\}$ be a family of parallel equidistant hypersurfaces
to a totally geodesic hyperplane $\mathscr E_0=f_0(\R^{n-1})$ of \,$\h^n.$
Then, for each  $r\in\{1, \dots ,n\},$  there exists a one-parameter family
$\mathscr S=\{\Sigma(\lambda)\,;\, \lambda>0\}$ of properly embedded $r$-minimal
hypersurfaces of \,$\h^n\times\R$ which are all homeomorphic to
$\R^n$ and invariant by $\mathscr F$-hyperbolic translations.
Each member $\Sigma(\lambda)\in\mathscr S$ has the following additional properties:
\begin{itemize}
  \item [\rm i)] For $r=n,$ $\Sigma(\lambda)$  is a constant angle entire $r$-minimal graph over $\h^n$ whose
  height function is unbounded above and below.
\end{itemize}

For $r<n,$ we distinguish the following cases:
\begin{itemize}[parsep=1ex]
  \item [\rm ii)] $\lambda>1:$  $\Sigma(\lambda)$ is symmetric with respect to
  the horizontal hyperplane $P_0=\h^n\times\{0\}$, and is contained
  in a slab $\h^n\times(-\alpha,\alpha),$ $\alpha>0.$
  \item [\rm iii)] $\lambda=1:$  $\Sigma(\lambda)$ is an $(f_s,\phi)$-graph ($s>0$) which is bounded above, unbounded below,
  and asymptotic to $\mathscr E_0\times(0,-\infty).$
  \item [\rm iv)] $\lambda<1:$  $\Sigma(\lambda)$ is an entire graph over $\h^n$ which is symmetric with respect to
  $\mathscr E_0$, and is contained is a slab $\h^n\times(-\alpha,\alpha),$ $\alpha>0.$
\end{itemize}
Furthermore, except for the cylinders $\mathscr E_s\times\R, \,s\ne 0$ (in the case $r=n$), and
up to ambient isometries,
the members of $\mathscr S$ are the only complete non totally geodesic $r$-minimal
hypersurfaces of \,$\h^n\times\R$ which are invariant by hyperbolic translations.
\end{theorem}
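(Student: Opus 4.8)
The plan is to follow the pattern of Theorems \ref{th-Hr-equidistanttype}--\ref{th-nonexistenceequidistant}, exploiting the fact that for $r$-minimal hypersurfaces the equation of Lemma \ref{lem-parallel} degenerates to the homogeneous linear equation $y'=ay$. First I would record that, for the family $\mathscr F$ of equidistants with principal curvature $k^s=-\tanh s$, one has $H_r^s={{n-1}\choose{r}}(-\tanh s)^r$, so that the coefficient $a(s)=rH_r^s/H_{r-1}^s$ reduces to $a(s)=-(n-r)\tanh(s)$, independently of the parity of $r$ (in contrast with the horosphere case). Hence the $\tau$-function of any $r$-minimal $(f_s,\phi)$-graph built on $\mathscr F$ is $\tau(s)=\tau_0(\cosh s)^{-(n-r)}$ for some constant $\tau_0>0$; for $r<n$ this is positive for every $s$, even in $s$, with a strict maximum $\tau_0$ at $s=0$ and decaying to $0$ as $s\to\pm\infty$. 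Since $\mathscr E_0$ is totally geodesic, $H_r^0=H_{r-1}^0=0$, and \eqref{eq-Hr} makes $r$-minimality automatic at $s=0$, so a graph determined by such a $\tau$ passes smoothly through $\mathscr E_0$. For $r=n$, \eqref{eq-Hr} forces $H_{n-1}^s\rho^{n-1}\rho'\equiv 0$, hence $\rho$ is constant and $\phi$ affine.

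For existence with $r<n$, I would introduce the parameter $\lambda>0$ by, say, $\tau_\lambda(0)=\lambda^{n-r}$, and split into three regimes according to the sign of $\lambda-1$. If $\lambda<1$, then $0<\tau_\lambda<1$ on all of $\R$, so $\rho_\lambda=\tau_\lambda^{1/r}\in(0,1)$ there and $\phi_\lambda(s)=\int_0^s\frac{\rho_\lambda(u)}{\sqrt{1-\rho_\lambda^2(u)}}\,du$ is a bounded odd diffeomorphism; the $(f_s,\phi_\lambda)$-graph is then an entire graph over $\h^n$ lying in a slab $\h^n\times(-\alpha,\alpha)$, and the evenness of $\rho_\lambda$ yields the symmetry with respect to $\mathscr E_0$, proving (iv). If $\lambda=1$, then $\tau_1(0)=1$ and $\tau_1<1$ for $s\ne 0$; restricting to $s>0$, the expansion $1-\tau_1\sim\tfrac{n-r}{2}s^2$ forces $\phi(s)\to-\infty$ as $s\to 0^+$ while $\phi$ stays bounded as $s\to+\infty$, giving the asymptotic graph of (iii) (the $r$-minimal analogue of Theorem \ref{th-graphequidistants}). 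If $\lambda>1$, then $\tau_\lambda=1$ exactly at $s=\pm s_*$ for a unique $s_*>0$ and $0<\tau_\lambda<1$ for $|s|>s_*$; on $[s_*,+\infty)$ one has $\rho_\lambda(s_*)=1$ and $\rho_\lambda'(s_*)<0$, so by Lemma \ref{lem-convergenceintegral} $\phi_\lambda$ extends to $s_*$ with vertical tangent along $\mathscr E_{s_*}\times\{\phi_\lambda(s_*)\}$, and reflecting across that horizontal hyperplane --- the trajectories of $\nabla\xi$ being geodesics that meet the boundary orthogonally --- produces the properly embedded hypersurface of (ii). In all three cases the non-convexity comes from \eqref{eq-principalcurvatures}, since $k_i=\rho\tanh(s)$ changes sign across $\mathscr E_0$ while $k_n=\rho'$ has the opposite sign on each side of $\mathscr E_0$, and the homeomorphism type $\R^n$ is immediate (an entire graph, a graph over an open half-space, or a doubled half-space). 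For $r=n$, the constant value $\rho\equiv c\in(0,1)$ gives directly the entire constant-angle graph of (i).

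For the classification, let $\Sigma$ be complete, connected, $r$-minimal, invariant by $\mathscr F$-hyperbolic translations, and not totally geodesic, and set $\Sigma_0:=\{x\in\Sigma:\Theta(x)\ne 0\}$. If $\Sigma_0=\emptyset$ then $\Sigma$ is a vertical cylinder over a leaf of $\mathscr F$, i.e.\ $\Sigma=\mathscr E_s\times\R$; for $r<n$ this is $r$-minimal only if $s=0$ (excluded), while for $r=n$ it is one of the exceptional cylinders ($s\ne 0$) or is totally geodesic. So assume $\Sigma_0\ne\emptyset$. As in the earlier proofs, around a point of $\Sigma_0$ there is a maximal $(f_s,\phi)$-graph $\Sigma'\subset\Sigma_0$ (after possibly reflecting across a horizontal hyperplane so that $\phi$ is increasing), whose $\tau$-function solves $y'=ay$, hence equals $\tau_0(\cosh s)^{-(n-r)}$ on a maximal interval $(s_0,s_1)$ with $0<\tau<1$. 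Since this $\tau$ never vanishes at a finite $s$, the case $\tau(s_0)=0$ cannot occur; in particular $\Sigma$ has no horizontal points, and the only possibilities for the pair $(s_0,\tau_0)$ are exactly the three regimes above (or, for $r=n$, the constant case). Thus $\Sigma'$ coincides, up to a vertical translation, with the corresponding piece of some $\Sigma(\lambda)\in\mathscr S$. Finally, when $\lambda\le 1$ the graph $\Sigma'$ is already closed in $\h^n\times\R$, so $\Sigma=\Sigma'$ by connectedness; when $\lambda>1$, the only boundary of $\Sigma'$ in $\Sigma$ is the vertical-tangent sphere $\mathscr E_{s_*}\times\{\phi(s_*)\}$, past which $\Sigma$ is forced to continue as the horizontal reflection of $\Sigma'$ (a graph over $\{s<s_*\}$ being impossible, as it would require $\rho>1$). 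In either case $\Sigma$ equals the member of $\mathscr S$ already identified.

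The step I expect to require the most care is matching a given local $(f_s,\phi)$-graph with the correct global member: one must read off the maximal interval $(s_0,s_1)$ from $\tau_0$ keeping in mind that $\tau$ is even with maximum at $s=0$, verify that the reflection in the case $\lambda>1$ is forced (not merely available), and keep track of the degenerate leaf $\mathscr E_0$, where all principal curvatures vanish. The remaining steps are routine adaptations of the arguments in Sections \ref{sec-rotationalr-minimal} and \ref{sec-nonrotationalHrhyp}.
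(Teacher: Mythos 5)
Your proposal is correct and follows essentially the same route as the paper's proof: the homogeneous equation $y'=-(n-r)\tanh(s)\,y$ gives $\tau_\lambda(s)=\lambda(\cosh s)^{r-n}$ (up to your harmless reparametrization of $\lambda$), the trichotomy on $\lambda$ versus $1$ produces cases (ii)--(iv) exactly as in the paper, and the classification rests on the same density-of-$\Sigma_0$ argument. The only differences are presentational --- you build the entire graph of (iv) directly from the odd $\phi_\lambda$ where the paper performs a reflection through $P_0$ followed by one through $\mathscr E_0\times\R$, and you get the divergence of $\phi$ in (iii) from the expansion $1-\tau_1\sim\tfrac{n-r}{2}s^2$ where the paper reuses the estimate from Theorem \ref{th-graphequidistants} --- while your explicit remark that \eqref{eq-Hr} makes $r$-minimality automatic on the degenerate leaf $\mathscr E_0$ (where Lemma \ref{lem-parallel} does not literally apply for $r\ge 2$) makes precise a point the paper leaves implicit.
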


\begin{proof}
The equation \eqref{eq-difequation} determined by $\mathscr F$ and $H_r=0$ is:
\begin{equation} \label{eq-equid01}
y'=a(s)y, \,\,\, a(s)=-(n-r)\tanh(s).
\end{equation}
For $r=n,$ its solution $\tau$ is constant. So, given $\lambda>0,$  defining
\[
\phi(s)= \lambda s, \,\, s\in (-\infty,+\infty),
\]
we have that the corresponding $(f_s,\phi)$-graph $\Sigma(\lambda)$ is an entire
$n$-minimal graph   whose level hypersurfaces are
the leaves of $\mathscr F.$ Clearly, the height function
of $\Sigma(\lambda)$ is unbounded above and below. Moreover, it follows from
\eqref{eq-thetaparallel} that $\Sigma(\lambda)$ is a constant angle
hypersurface. This proves (i).

Let us suppose now that  $1\le r<n.$  Given $\lambda>0,$ set
\[
\tau_\lambda(s):=\lambda\left(\frac{1}{\cosh s} \right)^{n-r}, \,\, s\in (-\infty, +\infty).
\]
It is easily checked that $\tau_\lambda$ is the solution of \eqref{eq-equid01}
satisfying  $\tau_\lambda(0)=\lambda.$

Assume that $\lambda>1.$ Then, defining
$s_\lambda:={\rm arccosh}\,(\lambda^{1/(n-r)}),$ one has
\[
0<\tau_\lambda(s)\le 1=\tau_\lambda(s_\lambda) \,\,\, \forall s\in [s_\lambda,+\infty).
\]
Hence, setting
\begin{equation}  \label{eq-philambda}
\phi_\lambda(s):=\int_{s_\lambda}^{s}\frac{\rho_\lambda(u)}{\sqrt{1-\rho_\lambda^2(u)}}du, \quad  \rho_\lambda
=\tau_\lambda^{1/r}, \quad  s\in (s_\lambda,+\infty),
\end{equation}
we have that the $(f_s,\phi_\lambda)$-graph $\Sigma'(\lambda)$ is a well
defined  $r$-minimal hypersurface, for $\tau'(s_\lambda)<0.$ Also, since $\tau_\lambda(s_\lambda)=1,$
the closure of $\Sigma'(\lambda)$ intersects $P_0$ orthogonally. Thus, we obtain
an $r$-minimal hypersurface $\Sigma(\lambda)$ by reflecting $\Sigma'(\lambda)$ about $P_0.$

As for the boundedness of  $\phi_\lambda$, we first observe that,
from the equalities $\tau_\lambda=\rho_\lambda^r$ and $\tau_\lambda'=a\tau_\lambda,$ we have  $\rho_\lambda=(r/a)\rho_\lambda'.$
In addition, the function $1/a$ is bounded above by $-1/(n-r)$ in $(0,+\infty).$ Hence,
\begin{eqnarray} \label{eq-equid02}
\phi_\lambda(s) &= & \int_{s_\lambda}^{s}\frac{r\rho_\lambda'(u)}{a(u)\sqrt{1-\rho_\lambda^2(u)}}du
\le-\frac{r}{n-r}\int_{\rho_\lambda(s_\lambda)}^{\rho_\lambda(s)}\frac{d\rho_\lambda}{\sqrt{1-\rho_\lambda^2}}\\[1ex]
         &= & \frac{r}{n-r}(\arcsin\rho_\lambda(s_\lambda)-\arcsin\rho_\lambda(s))\le\frac{\pi r}{2(n-r)}\,,\nonumber %
\end{eqnarray}
which finishes the proof of (ii).

Assuming now $\lambda=1,$ let us fix $s_0>0$ and define
\[
\phi(s)=\int_{s_0}^{s}\frac{\rho(u)}{\sqrt{1-\rho^2(u)}}du, \quad  \rho=\tau_1^{1/r}, \quad  s\in (0,+\infty).
\]

Since $\tau'(0)=0,$ we can mimic the final part of the proof of Theorem \ref{th-graphequidistants}
and conclude that $\phi$ is unbounded below, and that the corresponding $(f_s,\phi)$-graph
$\Sigma$ is asymptotic to $\mathscr E_0\times (-\infty, 0).$ Also, proceeding as in \eqref{eq-equid02}, we
can show that $\phi$ is bounded above. This proves (iii).

Given $0<\lambda<1,$ we have that $0<\tau_\lambda<1$
in $(0,+\infty).$ So, we can define $\phi_\lambda$ as in \eqref{eq-philambda}, replacing
$s_\lambda$ by $0$. Analogously, we have that $\phi_\lambda$ is bounded above, and that the boundary
of the $(f_s,\phi_\lambda)$-graph $\Sigma'(\lambda)$ is $\mathscr E_0\times\{0\}\subset P_0$\,.

Notice that $\phi_\lambda'(0)=\rho_\lambda(0)/\sqrt{1-\rho_\lambda^2(0)}$ is well defined and
positive, since $\rho_\lambda(0)$ is neither $0$ nor $1.$ Thus, we  obtain a complete properly
embedded $r$-minimal hypersurface $\Sigma(\lambda)$ from $\Sigma'(\lambda)$ by reflecting it
with respect to $P_0$\,, and then with respect to the  totally geodesic vertical
hyperplane $\mathscr E_0\times\R$ (Fig. \ref{fig-equidistant03}).
This shows (iv).

Assume now that $\Sigma$ is a complete non totally geodesic $r$-minimal
hypersurface of $\h^n\times\R$
which is invariant by $\mathscr F$-hyperbolic translations. Set
\[
\Sigma_0:=\{x\in\Sigma\,;\, \theta(x)\nabla\xi(x)\ne 0\}
\]
and suppose that $1\le r<n.$ Then, $\Sigma_0\ne\emptyset.$ Otherwise, $\Sigma$ would be either
a horizontal hyperplane or a cylinder over the hyperplane $\mathscr E_0$ of $\h^n.$ In both cases,
$\Sigma$ would be totally geodesic, which is contrary to our assumption.

Therefore, if $1\le r<n,$ for each $x_0\in\Sigma_0$\,,
there is an $(f_s,\phi)$-graph $\Sigma'\subset\Sigma_0$ which contains $x_0$\,, and  whose
$\tau$-function  is a solution of \eqref{eq-equid01}. More precisely, for some $c>0,$
one has
\[
\tau(s)=c\left(\frac{\cosh s_0}{\cosh s} \right)^{n-r}, \,\, s_0\,, s\in (-\infty, +\infty).
\]

Now,  recall that in the cases (ii)--(iv) above, the corresponding
function $\tau_\lambda$ satisfies $\tau_\lambda(0)=\lambda.$ Thus, for
$\lambda:=\tau(0)=c\cosh^{n-r}(s_0),$
the function $\tau$ coincides with  $\tau_\lambda,$ which implies that
$\Sigma'\subset\Sigma(\lambda).$ In addition, no $\Sigma(\lambda)\in\mathscr S$ has
horizontal or totally geodesic points, which gives that $\Sigma_0$ is open and dense in $\Sigma.$
Therefore, $\Sigma$ coincides with $\Sigma(\lambda).$

Finally, let us suppose that $r=n.$ If $\Sigma_0=\emptyset,$ then $\Sigma=\mathscr E_s$ for
some $s\ne 0.$ If $\Sigma_0\ne\emptyset,$ then there exists an $(f_s,\phi)$-graph $\Sigma'\subset\Sigma_0$
whose $\tau$-function is constant. In particular, up to a vertical translation, we have $\phi(s)=\lambda s$
for some $\lambda>0,$ so that $\Sigma'=\Sigma$ is the entire graph $\Sigma(\lambda)$ given in (i).
\end{proof}

\begin{figure}[htbp]
\includegraphics{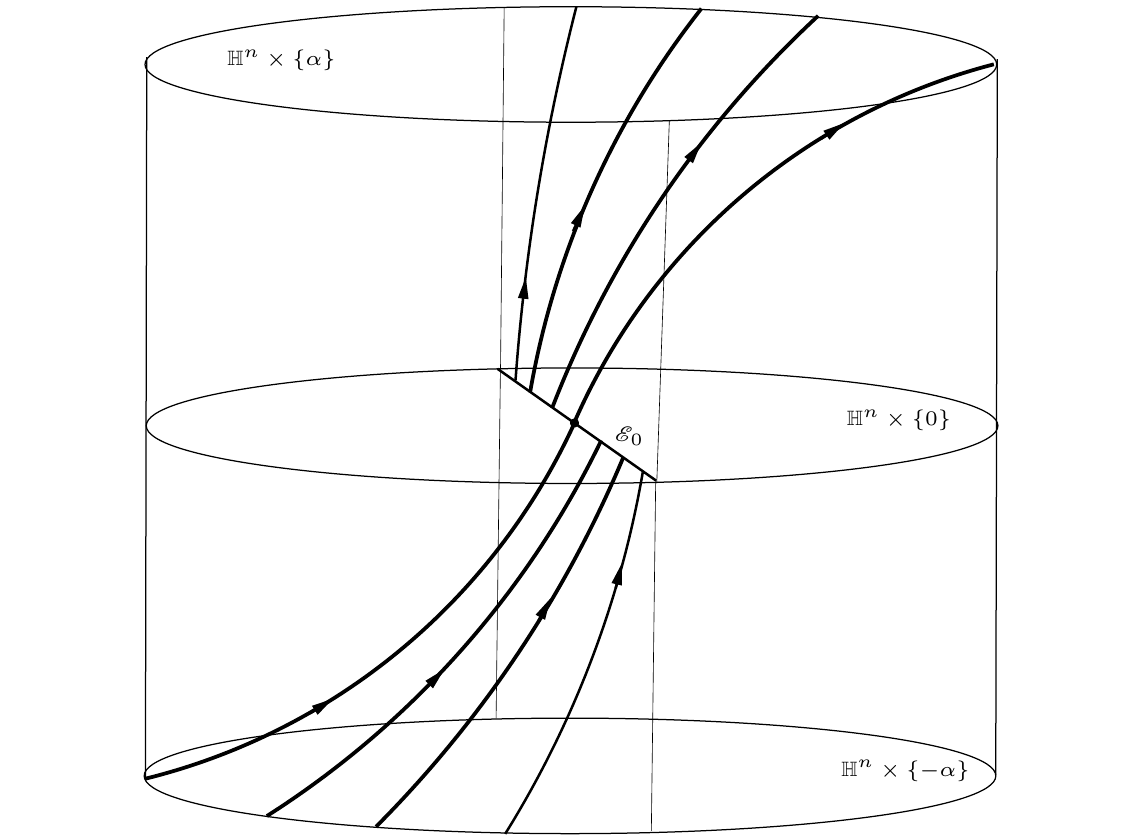}
\caption{\small The figure shows half of the $(f_s,\phi)$-graph $\Sigma'(\lambda)$ (above $\h^n\times\{0\}$)
and half of its reflection with respect to $\mathscr E_0$ (below $\h^n\times\{0\}$).}
\label{fig-equidistant03}
\end{figure}

\begin{remark}
It should be mentioned that the particular case $r=1$
of Theorem \ref{th-rminimalequidistant} was considered in \cite{berard-saearp02}.
\end{remark}

Next, we obtain all complete non totally geodesic $r$-minimal hypersurfaces
of $\hfr$ which are invariant  by parabolic isometries.

\begin{theorem} \label{th-rminimal-horospheres}
Let $\mathscr F=\{f_s\,;\, s\in(-\infty,+\infty)\}$ be a family of
parallel horospheres in $\hf.$
Then, for any $r\in\{1,\dots ,n\},$  there exists
a properly embedded
$r$-minimal hypersurface $\Sigma$ of \,$\hf\times\R$
which is invariant by $\mathscr F$-parabolic isometries.
In addition, $\Sigma$ is homeomorphic to $\R^n$ and has the following  properties:
\begin{itemize}[parsep=1ex]
\item [\rm i)] For $r=n,$ $\Sigma$ is a constant angle  entire graph over $\hf$ whose height function
is unbounded above and below.
\item [\rm ii)] For $r<n,$ $\Sigma$ is symmetric with respect to $\hf\times\{0\}$  and
is contained in a slab $\hf\times(-\alpha,\alpha).$
\end{itemize}
Furthermore, except for the cylinders $\mathscr H_s\times\R$ (in the case $r=n$), and
up to ambient isometries, $\Sigma$ is the only complete non totally geodesic $r$-minimal
hypersurface of \,$\h^n\times\R$ which is invariant by parabolic isometries.
\end{theorem}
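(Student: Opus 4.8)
The plan is to follow the pattern of Theorems \ref{th-Hr-horospheretype} and \ref{th-rminimalequidistant}: first produce $\Sigma$ as an $(f_s,\phi)$-graph (or a symmetric gluing of two such) built on the parallel family $\mathscr F$ via the machinery of Section \ref{sec-Hrgraphs}, and then show that any hypersurface as in the final assertion must arise this way. Since every horosphere $f_s\in\mathscr F$ has the same constant principal curvatures ($1$ with multiplicity $1$ and $1/2$ with multiplicity $n-2$; all equal to $1$ if $\mathbb F=\R$), one has $H_r^s\equiv H_r^0>0$ for $1\le r\le n-1$ and $H_n^s\equiv 0$ (horospheres of $\hf$ being $(n-1)$-dimensional). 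Hence the coefficient $b$ in the ODE \eqref{eq-difequation} determined by $\mathscr F$ and $H_r=0$ vanishes, while $a=rH_r^0/H_{r-1}^0$ is a positive constant for $r<n$ and $a=0$ for $r=n$; the equation reads $y'=ay$. For $r=n$ I would take any constant solution $\tau_0\in(0,1)$: with $\rho\equiv\tau_0^{1/n}$, identity \eqref{eq-thetaparallel} shows the resulting $(f_s,\phi)$-graph is a constant-angle hypersurface --- entire, since $\{\mathscr H_s\}$ foliates $\hf$ --- whose $\phi$ is affine in $s$, hence with height function unbounded above and below; this gives (i).

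For $r<n$ I would take $\tau(s)=e^{as}$, so that $0<\tau<1$ on $(-\infty,0)$, $\tau(0)=1$, $\tau'(0)=a>0$, and $\tau(s)\to 0$ exponentially as $s\to-\infty$. With $\rho=\tau^{1/r}$, case (i) of Lemma \ref{lem-convergenceintegral} makes $\phi(s):=-\int_{s}^{0}\rho(u)(1-\rho^2(u))^{-1/2}\,du$ well defined on $(-\infty,0]$, and the exponential decay of $\rho$ at $-\infty$ forces $\phi(s)\downarrow -\alpha$ for some $\alpha>0$. By Lemma \ref{lem-parallel} the $(f_s,\phi)$-graph $\Sigma'$ is $r$-minimal: a graph over the closed horoball bounded by $\mathscr H_0$, with vertical tangent spaces along $\partial\Sigma'=\mathscr H_0\times\{0\}$ (as $\rho(0)=1$) and contained in $\hf\times(-\alpha,0]$. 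Because the trajectories of $\nabla\xi$ are geodesics meeting $\partial\Sigma'$ orthogonally, reflecting $\Sigma'$ across $\hf\times\{0\}$ yields a $C^\infty$, properly embedded, $\mathscr F$-parabolic-invariant $r$-minimal hypersurface $\Sigma$, symmetric with respect to $\hf\times\{0\}$, contained in $\hf\times(-\alpha,\alpha)$, and homeomorphic to $\R^n$ (two closed horoballs glued along their common boundary horosphere); this proves (ii).

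For the classification, let $\Sigma$ be a complete connected non totally geodesic $r$-minimal hypersurface of $\hf\times\R$ invariant by $\mathscr F$-parabolic isometries, which is not a cylinder over a horosphere when $r=n$; put $\Sigma_0:=\{x\in\Sigma:\theta(x)\nabla\xi(x)\ne 0\}$. As before, $\Sigma_0\ne\emptyset$: otherwise connectedness forces $\theta\equiv\pm1$ (so $\Sigma=\hf\times\{t_0\}$, totally geodesic, excluded) or $\theta\equiv0$ (so $\Sigma=\mathscr H_s\times\R$, whose $r$-th mean curvature is $H_r^0>0$ for $r<n$, contradicting $r$-minimality, and which is the excluded cylinder for $r=n$). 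Near any $x_0\in\Sigma_0$, parabolic invariance makes the horizontal sections of $\Sigma$ pieces of horospheres of $\mathscr F$, so up to a reflection about a horizontal hyperplane there is an $(f_s,\phi)$-graph $\Sigma'\subset\Sigma_0$ through $x_0$, whose $\tau$-function solves $y'=ay$ by the converse part of Lemma \ref{lem-parallel}. If $r=n$ then $\tau$ is constant, $\Sigma'$ is an entire constant-angle graph, and completeness gives $\Sigma=\Sigma'$ --- one of the hypersurfaces of (i). If $r<n$ then $\tau=ce^{as}$ with $c>0$, so the maximal interval with $0<\tau<1$ is $(-\infty,s_1)$, $\tau(s_1)=1$; after an ambient isometry (a hyperbolic translation of $\hf$ permuting the horospheres, followed by a vertical translation) normalizing $s_1=0$, $\phi(s_1)=0$, the functions $\tau$ and $\phi$ match those of the half-graph built above, so each connected component of $\Sigma_0$ is an isometric copy of that half-graph. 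I would then check that $\Sigma_0$ is dense in $\Sigma$: an open subset of $\Sigma\setminus\Sigma_0$ has horizontal and vertical components, a vertical one lying in some $\mathscr H_s\times\R$ (not $r$-minimal for $r<n$) and a horizontal one being totally geodesic --- either exhausting the connected $\Sigma$ (excluded) or abutting a graph piece and forcing its $\tau$-function to vanish at a finite parameter, against $\tau=ce^{as}>0$. Finally, since distinct members of the constructed family meet transversally (two of them tangent at a point must coincide, by uniqueness for $y'=ay$ and the normalization), connectedness of $\Sigma$ together with density of $\Sigma_0$ forces all its components into a single such hypersurface, which equals $\Sigma$ by completeness; hence $\Sigma$ is, up to an ambient isometry, one of the hypersurfaces constructed above.

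The step I expect to be the main obstacle is this last one --- gluing the graph pieces of $\Sigma_0$ along $\Sigma\setminus\Sigma_0$. As noted at the start of Section \ref{sec-rotationalr-minimal}, the Maximum-Continuation Principle is not available for $r$-minimal hypersurfaces, so unique continuation of solutions of an elliptic equation cannot be invoked; the matching must instead be carried out directly, using that the $\nabla\xi$-trajectories are geodesics hitting the relevant sections orthogonally, the uniqueness of solutions of $y'=ay$, and the transversality of distinct members of the constructed family --- exactly as in the proofs of Theorems \ref{th-r-minimalrotationalHxR} and \ref{th-rminimaldelaunaytypesn}.
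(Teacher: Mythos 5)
Your proposal is correct and follows essentially the same route as the paper: the same ODE $y'=ay$ with constant $a=rH_r^0/H_{r-1}^0>0$ (resp.\ $a=0$ for $r=n$), the same exponential solution $\tau(s)=e^{as}$ on $(-\infty,0]$ with reflection across $\hf\times\{0\}$, the same boundedness argument for $\phi$ (the paper just computes $\alpha=\pi r/2a$ explicitly), and the same uniqueness scheme via density of $\Sigma_0$ and uniqueness of solutions of $y'=ay$, with your ambient-isometry normalization playing the role of the paper's observation that the graphs built from $\tau_\lambda$ are pairwise isometric. Your closing remark correctly identifies the gluing step as the delicate point, and handles it exactly as the paper does in Theorems \ref{th-r-minimalrotationalHxR} and \ref{th-rminimaldelaunaytypesn}.
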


\begin{proof}
The proof of the existence of $\Sigma$ as in (i) is analogous
to the one given in the preceding
theorem. So, let us assume $r<n.$ In this case,
the equation \eqref{eq-difequation} determined by $\mathscr F$ and  $H_r=0$  takes the form
\begin{equation}  \label{eq-edohorosphere}
y'=ay, \,\,\, a=\frac{rH_r^0}{H_{r-1}^0}>0\,,
\end{equation}
and its positive solutions are
\begin{equation}  \label{eq-solutiondifequation1}
\tau_\lambda(s)=\lambda e^{as}, \, \,\,\, \lambda>0, \,\,  s\in(-\infty,+\infty).
\end{equation}

It is easily checked that, since the horospheres of $\hf$ are pairwise congruent,
an $(f_s,\phi)$ graph with $\tau$-function
$\tau_\lambda$ does not depend on $\lambda.$ More precisely, two such graphs obtained
from functions $\tau_{\lambda_1}$ and $\tau_{\lambda_2}$\,, $\lambda_1\ne\lambda_2,$ are
isometric. Therefore, we can assume $\lambda=1$ and set $\tau:=\tau_1$\,. Then, we have
\[
0<\tau(s)< 1=\tau(0) \,\,\, \forall s\in (-\infty, 0).
\]

Since $\tau'(0)=a\tau(0)=a>0,$ writing $\rho^r=\tau|_{(-\infty, 0)}$, we have
\[
\phi(s)=\int_{0}^{s}\frac{\rho(u)}{\sqrt{1-\rho^2(u)}}du, \,\,\, s\in (-\infty,0),
\]
is well defined, and so is the corresponding $(f_s,\phi)$-graph $\Sigma'.$
Also,  $\tau(0)=1,$ so that the tangent spaces of $\Sigma'$ along its boundary are all
vertical. Therefore, we obtain the stated $r$-minimal hypersurface $\Sigma$ by reflecting
$\Sigma'$ about $P_0=\hf\times\{0\}.$

Observe that, for all $s\in (-\infty, 0),$ one has
\[
-\phi(s)=\int_{s}^{0}\frac{e^{au/r}}{\sqrt{1-e^{2au/r}}}du=\frac{r}{a}(\pi/2-\arcsin(e^{as/r})). % \le\frac{\pi r}{2a}\,,
\]
Hence, setting $\alpha:={\pi r}/{2a}>0,$ we have that
$\phi(s)\rightarrow -\alpha$ as $s\rightarrow-\infty,$
which proves that $\Sigma$ is contained in the slab $\hf\times(-\alpha,\alpha).$

As for the uniqueness of $\Sigma$, notice that the following hold:
\begin{itemize}
  \item The $\tau$-function of any $r$-minimal $(f_s,\phi)$-graph, $f_s\in\mathscr F,$ is a positive solution
  of \eqref{eq-edohorosphere} (if $r<n$) or is a positive constant (if $r=n$).
  \item $\Sigma$ has no horizontal points.
  \item A vertical $\mathscr F$-invariant hypersurface of $\hfr$ is $r$-minimal if and only if $r=n.$
  \item The graph in (i) has no vertical points.
\end{itemize}

These facts allow us to argue  as in preceding proofs, and then show the uniqueness of
$\Sigma$ as asserted.
\end{proof}

\section{Uniqueness of Rotational $H_r$-spheres of $\q_\epsilon^n\times\R$} \label{sec-uniqueness}

In this concluding section, we concern the uniqueness of the rotational $H_r$-spheres we
constructed in Section \ref{sec-rotationalHrhyp}.
We restrict ourselves to $\q_\epsilon^n\times\R$, with $\epsilon\in\{-1,1\}$  and $n\ge 3.$
As we mentioned before, the case $n=2$ was considered in \cite{abresch-rosenberg,esp-gal-rosen}.

We obtain a Jellett--Liebmann type theorem by showing that a compact, connected and strictly convex
$H_r$-hypersurface of $\q_\epsilon^n\times\R$ is a rotational embedded sphere (cf. Theorem \ref{th-uniqueness}).
We also show the uniqueness of these spheres
under  completeness or properness assumptions, instead of compactness
(cf. Theorem \ref{th-uniquenessforcomplete} and Corollary \ref{cor-uniqueness}).

For the  proof of Theorem \ref{th-uniquenessforcomplete},  we make
use of a height estimate for convex graphs in $M\times\R$ which we establish in the next proposition.
First, we compute the Laplacian of both the height function $\xi$ and
the angle function $\theta$ of an arbitrary  hypersurface $\Sigma$ of a general product $M\times\R.$

Given a smooth function $\zeta$ on $\Sigma,$ let us denote its Laplacian by $\Delta\zeta,$ i.e.,
\[
\Delta\zeta:={\rm trace}({\rm Hess}\,\zeta).
\]
In particular, from equation \eqref{eq-hessianheightfunction}, the Laplacian of $\xi$ is given by
\begin{equation}\label{laplacianheightfunction}
\Delta\xi = \Theta H, \,\,\, H=H_1\,.
\end{equation}
%where $H={\rm trace}\,A$ is the (non normalized) mean curvature function of $\Sigma.$

Recall that, for $X, Y\in T\Sigma,$  the Codazzi equation reads as
\[
(\overbar{R}(X,Y)N)^\top = \left(\nabla_YA\right)X - \left(\nabla_XA\right)Y,
\]
where $\overbar{R}$ is the curvature tensor of $M\times\R,$ $\top$ denotes the tangent component of
the tangent bundle $T\Sigma$ of $\Sigma,$ and, by definition,
\[
\left(\nabla_YA\right)X:=\nabla_YAX-A\nabla_YX.
\]

Observing that
\[
\nabla_X\nabla\xi=\left(\overbar\nabla_X\nabla\xi\right)^\top= -\left(\overbar\nabla_X\Theta N\right)^\top=\Theta AX,
\]
we have  from \eqref{eq-gradxi} that
\[-\nabla_X\nabla\Theta = \nabla_X A\nabla\xi=\left(\nabla_XA\right)\nabla\xi+A\nabla_X\nabla\xi=\left(\nabla_XA\right)\nabla\xi+\Theta A^2X,\]
which yields
\begin{equation}\label{eq-theta1}
\nabla_X\nabla\Theta=-(\overbar{R}(\nabla\xi,X)N)^\top - \left(\nabla_{\nabla\xi}A\right)X-\Theta A^2X.
\end{equation}

Now, let us fix $x\in\Sigma$ and an orthonormal frame
$\{X_1\,, \dots ,X_n\}$ in a neighborhood of $x$ in $\Sigma,$
which is geodesic at $x,$ that is
\[\nabla_{X_i}X_j\,(x)=0 \,\,\,\forall i, j=1,\dots ,n.\]
Writing $\xi_j=X_j(\xi),$ we have $\nabla\xi=\sum_j \xi_jX_j.$
Therefore
\begin{eqnarray}
  \sum_{i=1}^{n}\langle \left(\nabla_{\nabla\xi}A\right)X_i\,,\,X_i\rangle &=&\sum_{i=1}^{n}\left(\langle\nabla_{\nabla\xi}AX_i\,,\,X_i\rangle-
  \langle A\nabla_{\nabla\xi}X_i\,,\,X_i\rangle\right) \nonumber  \\
   &=& \sum_{i,j=1}^{n}\xi_j(\langle\nabla_{X_j}AX_i\,,\,X_i\rangle-\langle A\nabla_{X_j}X_i\,,\,X_i\rangle)  \nonumber\\
   &=& \sum_{i,j=1}^{n}\xi_j(X_j\langle AX_i,X_i\rangle-\langle AX_i,\nabla_{X_j}X_i\rangle-\langle A\nabla_{X_j}X_i\,,\,X_i\rangle) \nonumber\\
   &=& \langle\nabla\xi,\nabla H \rangle-\sum_{i,j=1}^{n}\xi_j(\langle AX_i,\nabla_{X_j}X_i\rangle-\langle A\nabla_{X_j}X_i\,,\,X_i\rangle), \nonumber
\end{eqnarray}
which implies that, at the chosen point $x\in\Sigma,$
\[
\sum_{i=1}^{n}\langle \left(\nabla_{\nabla\xi}A\right)X_i\,,\,X_i\rangle = \langle\nabla\xi,\nabla H\rangle.
\]

Since $x$ is arbitrary, we get from this last equality and \eqref{eq-theta1} that, on $\Sigma,$
\begin{equation}\label{eq-laplaciantheta}
\Delta\Theta=\overbar{\rm Ric}(\nabla\xi,N)-\langle\nabla\xi,\nabla H\rangle-\Theta\|A\|^2,
\end{equation}
where $\overbar{\rm Ric}$ denotes the Ricci curvature tensor of $M\times\R$ and
$\|A\|^2:={\rm trace}\,A^2.$

\begin{remark}
For the next results, except for Theorem \ref{th-uniqueness2},  we  order the principal curvatures
of a hypersurface $\Sigma$ of $M\times\R$ as
\[k_1\le k_2\le\cdots\le k_{n-1}\le k_n\,.\]
\end{remark}

\begin{proposition} \label{prop-vhe}
Consider an arbitrary Riemannian manifold \,$M,$ and let $\Sigma\subset M\times\R$ be a
compact vertical graph of a  nonnegative function  defined on a domain $\Omega\subset M\times\{0\}.$
Assume $\Sigma$  strictly convex up to $\partial\Sigma\subset M\times\{0\}.$ Under these conditions,
the following height estimate holds:
\begin{equation} \label{eq-delta}
\xi(x)\le\frac{1}{\inf_\Sigma k_1} \,\,\, \forall x\in\Sigma.
\end{equation}
\end{proposition}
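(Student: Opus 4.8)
The plan is to use the function $u:=c-\xi$ for a suitable constant $c$ and apply the maximum principle, exploiting the Laplacian formulas for $\xi$ and $\Theta$ derived just above. More precisely, I would look for a linear combination of $\xi$ and $\Theta$ that the convexity hypothesis forces to be subharmonic (or superharmonic), so that its extremum is attained on the boundary $\partial\Sigma\subset M\times\{0\}$, where $\xi$ vanishes.

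First I would set up notation: since $\Sigma$ is a vertical graph over $\Omega\subset M\times\{0\}$ of a nonnegative function, with $\partial\Sigma$ lying in $M\times\{0\}$, we have $\xi\ge 0$ on $\Sigma$ and $\xi\equiv 0$ on $\partial\Sigma$. Choosing the orientation $N$ so that $\Theta=\langle N,\partial_t\rangle>0$ (the graph is vertical, so there are no vertical points and $\Theta$ is nowhere zero; strict convexity up to the boundary then fixes a consistent sign), the principal curvatures satisfy $k_i\ge\inf_\Sigma k_1>0$. Recall from \eqref{laplacianheightfunction} that $\Delta\xi=\Theta H$ and from \eqref{eq-hessianheightfunction} that ${\rm Hess}\,\xi(X,X)=\Theta\langle AX,X\rangle$, so in fact ${\rm Hess}\,\xi\ge \Theta(\inf k_1)\langle\cdot,\cdot\rangle$ as a quadratic form. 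The key computation is then $\Delta(\Theta+(\inf_\Sigma k_1)\,\xi)$ or, more cleanly, to estimate ${\rm Hess}$ of $\Theta + (\inf k_1)\xi$: combining \eqref{eq-laplaciantheta} with the fact that $\Theta>0$, $\|A\|^2\ge 0$, one wants the cross term $\langle\nabla\xi,\nabla H\rangle$ to be controlled. Here is where I would instead follow the classical Hoffman--de Lira--Rosenberg / Korevaar argument: consider $\varphi:=\Theta + (\inf_\Sigma k_1)\xi$ and show $\Delta\varphi\le 0$ where $\varphi$ attains an interior maximum, using ${\rm Hess}\,\xi\ge 0$ to kill the bad sign and the convexity to bound $\langle\nabla\xi,\nabla H\rangle$; since $\varphi$ is then superharmonic (at least at interior critical points, which is enough for the maximum principle), its minimum is on $\partial\Sigma$. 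At $\partial\Sigma$ we have $\xi=0$ and $\Theta\le 1$, while at the highest point $x_{\max}$ of $\Sigma$ the point is horizontal so $\Theta(x_{\max})=1$ (with our sign). Comparing $\varphi$ at $x_{\max}$ with its boundary infimum and rearranging gives $(\inf k_1)\,\xi(x_{\max})\le 1 - \Theta|_{\partial\Sigma} + \Theta|_{\partial\Sigma} \le 1$, hence $\xi(x)\le \xi(x_{\max}) \le 1/\inf_\Sigma k_1$ for all $x$.

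Filling in the maximum-principle step cleanly is the main obstacle: one must verify that the quantity $\Theta+(\inf_\Sigma k_1)\xi$ (or whichever linear combination is chosen) genuinely satisfies a differential inequality of the form $\Delta\varphi + \langle Z,\nabla\varphi\rangle\le 0$ for some vector field $Z$, so that the Hopf maximum principle applies; the term $\langle\nabla\xi,\nabla H\rangle$ in \eqref{eq-laplaciantheta} must be absorbed, and since $\Delta\xi=\Theta H$ one can write $\langle\nabla\xi,\nabla H\rangle = \langle\nabla\xi,\nabla(\Delta\xi/\Theta)\rangle$, producing gradient terms in $\varphi$ that go into $Z$; the curvature term $\overbar{\rm Ric}(\nabla\xi,N)$ also needs a sign or bound, and this is exactly where the Gauss equation together with convexity is used — for a convex hypersurface the relevant curvature contribution has the right sign. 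I would carry out this absorption explicitly, then invoke the maximum principle to conclude the extremum is on $\partial\Sigma$, and finally read off the estimate \eqref{eq-delta} by evaluating at the top point as above. The hypothesis "strictly convex up to $\partial\Sigma$" is what guarantees $\inf_\Sigma k_1>0$ so the bound is finite and the argument does not degenerate near the boundary.
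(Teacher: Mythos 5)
Your overall strategy --- test a linear combination of $\xi$ and $\Theta$ against the maximum principle and read the estimate off at the boundary --- is the same as the paper's, but the step you yourself flag as ``the main obstacle'' is a genuine gap, and the way you propose to close it would not work. You want to establish a \emph{global} differential inequality $\Delta\varphi+\langle Z,\nabla\varphi\rangle\le 0$ by absorbing the term $\langle\nabla\xi,\nabla H\rangle$ into a drift $Z$ and by arguing that the term $\overbar{\rm Ric}(\nabla\xi,N)$ in \eqref{eq-laplaciantheta} ``has the right sign'' by convexity. It does not: $\overbar{\rm Ric}$ is the \emph{ambient} Ricci tensor of $M\times\R$, the proposition is stated for an arbitrary Riemannian manifold $M$, and convexity of $\Sigma$ gives no control whatsoever on the ambient curvature. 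Moreover $\overbar{\rm Ric}(\nabla\xi,N)$ is not a multiple of $\nabla\varphi$, so it cannot be hidden in the drift term either. There is also a sign problem at the end: with the orientation $\Theta>0$ a superharmonic $\varphi$ has its \emph{minimum} on $\partial\Sigma$, and comparing $\varphi(x_{\max})=1+(\inf k_1)\xi(x_{\max})$ with a boundary minimum gives an inequality in the wrong direction (it would force $\xi(x_{\max})\le 0$ if you instead had the maximum on the boundary).

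The paper's proof avoids all of this with a pointwise trick that you are missing. Take the inward orientation (so $\Theta\le 0$), fix $\delta$ with $1/\delta<\inf_\Sigma k_1$ \emph{strictly}, and set $\varphi=\xi+\delta\Theta$. At a putative interior maximum $x$ the first-order condition and \eqref{eq-gradxi} give $\nabla\xi(x)=\delta A\nabla\xi(x)$; since $1/\delta$ lies strictly below the spectrum of $A$, this forces $\nabla\xi(x)=0$. That single observation makes \emph{both} problematic terms $\overbar{\rm Ric}(\nabla\xi,N)$ and $\langle\nabla\xi,\nabla H\rangle$ vanish identically at $x$, because each has $\nabla\xi$ as a factor --- no global inequality, no Hopf lemma, no hypothesis on $M$ needed. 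Then $0\ge\Delta\varphi(x)=-H+\delta\|A\|^2$ contradicts $H/\delta<k_1H\le\|A\|^2$, so the maximum of $\varphi$ is on $\partial\Sigma$, where $\varphi=\delta\Theta\le 0$; hence $\xi\le-\delta\Theta\le\delta$, and letting $\delta\downarrow 1/\inf_\Sigma k_1$ gives \eqref{eq-delta}. Note that the strict inequality in the choice of $\delta$ (rather than your coefficient $\inf_\Sigma k_1$ exactly) is what guarantees $1/\delta$ is not an eigenvalue of $A$ at any point, which is the hinge of the whole argument.
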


\begin{proof}
Consider in $\Sigma$ the ``inward'' orientation, so that
its angle function $\theta$ is non positive. Choose $\delta>0$ satisfying
$1/\delta<\inf_\Sigma k_1$ and define on $\Sigma$ the function
\[
\varphi=\xi+\delta\Theta.
\]

We claim that $\varphi$ has no interior maximum.  Indeed, assuming otherwise,
let $x\in\Sigma-\partial\Sigma$ be a maximum point of $\varphi$. In this case, from \eqref{eq-gradxi}, we have
\[
0=\nabla\varphi(x)=\nabla\xi(x)+\delta\nabla\Theta(x)=\nabla\xi(x)-\delta A\nabla\xi(x).
\]
Hence, if we had $\nabla\xi(x)\ne 0,$ then $1/\delta$ would be an eigenvalue of
$A$ at $x,$ which is impossible, by our choice of $\delta.$
Thus, $x$ is a critical point of $\xi.$ Since $\Sigma$ is strictly convex,
$x$ is necessarily its highest point. In particular,
$\Theta(x)=-1.$ This, together with identities \eqref{laplacianheightfunction}
and \eqref{eq-laplaciantheta}, gives that, at  $x$,
\begin{equation} \label{eq-laplacian10}
0\ge\Delta\varphi=-H+\delta\|A\|^2.
\end{equation}
However,  from our choice of $\delta$  and the strict convexity of $\Sigma$, we have
\[
\frac{H}{\delta}<k_1H=k_1(k_1+\cdots +k_n)\le k_1^2+\cdots+k_n^2=\|A\|^2,
\]
which contradicts \eqref{eq-laplacian10}.
Therefore, $\varphi$ attains its maximum on $\partial\Sigma,$
which implies that $\varphi\le 0$ on $\Sigma,$ for
$\varphi|_{\partial\Sigma}=\delta\Theta\le 0.$ Hence,
\[
\xi(x)\le -\delta\Theta(x)\le\delta \,\,\, \forall x\in\Sigma.
\]

The result, then, follows from this last inequality,
since it holds for any positive $\delta>1/ \inf_\Sigma k_1.$
\end{proof}

\begin{remark}
Proposition \ref{prop-vhe} has its own importance,
since  it establishes height estimates for vertical graphs
in $M\times\R$ making
no assumptions on $M.$ In addition, no curvature of such a graph is
assumed to be constant.
\end{remark}

In the next two theorems, we  apply the Alexandrov reflection technique.
Since the arguments  are standard, the proofs  will be somewhat sketchy on this matter
(see, e.g., \cite[Theorems 4.2 and 5.1]{cheng-rosenberg} and \cite[Theorem 1.1]{nelli-rosenberg}).
We add that the proof of Theorem \ref{th-uniqueness} is, essentially, the one for
\cite[Corollary 1]{delima}, in which  the case $r=1$ was considered.

\begin{theorem}[Jellett--Liebmann-type theorem]\label{th-uniqueness}
Let $\Sigma$ be a {compact}  connected strictly convex $H_r(>0)$-hypersurface of
\,$\q_\epsilon^n\times\R$ ($n\ge 3$).
Then, $\Sigma$ is an {embedded} rotational $H_r$-sphere.
\end{theorem}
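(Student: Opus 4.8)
The plan is to combine the Alexandrov reflection technique with the height estimate of Proposition \ref{prop-vhe} and the classification of rotational $H_r$-spheres in Theorems \ref{th-Hrrotational} and \ref{Hr-rotationalSnxR}. First I would observe that, since $\Sigma$ is compact, its height function $\xi$ attains a maximum and a minimum; at such critical points $\Sigma$ is tangent to a horizontal hyperplane, and strict convexity forces $\Sigma$ to lie locally on one side. A standard connectedness/convexity argument (the global version of the statement that a strictly convex hypersurface is locally a graph, cf. \cite{delima}) shows that $\Sigma$ is embedded and bounds a convex body $K\subset\q_\epsilon^n\times\R$; moreover $\Sigma$ has exactly one maximum point and one minimum point, so it decomposes as the union of two vertical graphs over a common domain $\Omega$ in a horizontal slice, glued along $\Sigma\cap P_{t_0}$ where $t_0$ is the height of the ``equator'' (the set where the tangent planes are vertical).

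Next I would run Alexandrov reflection with respect to horizontal hyperplanes $P_t=\q_\epsilon^n\times\{t\}$. Starting from above and decreasing $t$, the reflected part of $\Sigma$ across $P_t$ stays inside $K$ until a first contact occurs; since $\Sigma$ is a strictly convex $H_r$-hypersurface, the Maximum-Continuation Principle (applied to the reflected piece and $\Sigma$, which are tangent either at an interior point or at a boundary point with matching normals) forces $\Sigma$ to be symmetric with respect to that hyperplane. Hence $\Sigma$ is symmetric about the horizontal hyperplane through $t_0$, and in particular the maximum and minimum heights are symmetric about $t_0$. This already gives the mirror symmetry in the vertical direction.

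To obtain rotational symmetry I would apply Alexandrov reflection with respect to the vertical hyperplanes of $\q_\epsilon^n\times\R$, i.e.\ products $\Pi\times\R$ where $\Pi$ is a totally geodesic hyperplane of $\q_\epsilon^n$. For each direction, the same reflection argument (now the reflected piece is again a strictly convex $H_r$-hypersurface, and the Maximum-Continuation Principle applies verbatim) shows $\Sigma$ is symmetric with respect to a well-defined vertical hyperplane in that direction. Intersecting all these symmetry hyperplanes produces a common axis $\{o\}\times\R$; since $\Sigma$ is symmetric with respect to every vertical hyperplane through $o$, every horizontal section $\Sigma_t$ is invariant under all reflections fixing $o$, hence is a union of geodesic spheres of $\q_\epsilon^n\times\{t\}$ centered at $o$. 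By the definition in Section \ref{sec-rotationalHrhyp}, $\Sigma$ is rotational. Finally, being a compact rotational $H_r$-hypersurface homeomorphic to $\s^n$ with a single maximum and minimum, $\Sigma$ must coincide with one of the embedded rotational $H_r$-spheres constructed in Theorem \ref{th-Hrrotational}-(i) (when $\epsilon=-1$, which also forces $H_r>C_{\mathbb F}(r)$ via Theorem \ref{th-Hrrotational}-(ii)) or Theorem \ref{Hr-rotationalSnxR} (when $\epsilon=1$), again by the Maximum-Continuation Principle since $\Sigma$ shares an $(f_s,\phi)$-graph piece with the model sphere.

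The main obstacle I anticipate is the first step: rigorously deducing from strict convexity that $\Sigma$ is embedded, bounds a convex region, and has exactly one pair of horizontal (critical) points so that the Alexandrov reflection can even be started. This is where the results of \cite{delima} on convexity of hypersurfaces in $M\times\R$ enter, and care is needed because $\q_\epsilon^n\times\R$ is not itself a space form; the convexity theory there (Hadamard-type theorems for the Hadamard factor $\h^n$, and the sphere case handled separately) is exactly what licenses this reduction. The height estimate of Proposition \ref{prop-vhe} is not strictly needed for the compact case but guarantees the reflection process terminates in finite ``time''; its real role is in the noncompact Theorem \ref{th-uniquenessforcomplete}. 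Once the geometric setup is in place, the reflection arguments and the appeal to the Maximum-Continuation Principle are routine.
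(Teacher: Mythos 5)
Your overall strategy (embeddedness via \cite{delima}, horizontal Alexandrov reflection, then vertical Alexandrov reflection to get rotational symmetry) follows the paper's line for the most part, but there is a genuine gap in the case $\epsilon=1$: in $\s^n\times\R$ the reflection with respect to vertical hyperplanes $\Pi\times\R$, $\Pi=\s^{n-1}\subset\s^n$ totally geodesic, cannot be started unless $\Sigma$ is already known to lie in $\s^n_+\times\R$ for some open hemisphere $\s^n_+$. A totally geodesic $\s^{n-1}$ separates $\s^n$ into two hemispheres that the reflection interchanges, and the moving-planes method needs an initial position of the sweeping family that misses $\Sigma$ entirely; if the projection $\pi(\Sigma)$ is not confined to a hemisphere, no such starting position exists and the claim that ``the Maximum-Continuation Principle applies verbatim'' breaks down at the very first step. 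This is exactly the point the paper has to work for: after obtaining the horizontal symmetry plane $P_{t_0}$ and writing $\Sigma$ as a bigraph over $\pi(\Sigma)$ with $\partial\pi(\Sigma)=\Sigma_0:=\Sigma\cap\s^n$, it invokes \cite[Lemma 1]{delima} to see that $\Sigma_0$ is strictly convex in $\s^n$, hence non totally geodesic, and then the do Carmo--Warner theorem \cite{docarmo-warner} to place $\Sigma_0$ (and therefore $\pi(\Sigma)$ and $\Sigma$) inside an open hemisphere. Without this hemisphere confinement your vertical reflection argument does not get off the ground, and your closing remark about ``care needed because $\q_\epsilon^n\times\R$ is not a space form'' does not identify this as the actual obstruction.

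Two smaller points. For $\epsilon=-1$ the paper does not redo the vertical reflections at all: once embeddedness is known it simply quotes the Alexandrov-type theorem of \cite[Theorem 7.6]{elbert-earp} for compact embedded $H_r$-hypersurfaces of $\h^n\times\R$; your plan of running vertical reflections in $\h^n\times\R$ would work (it is essentially how that cited theorem is proved) but is redundant given the reference. Also, your parenthetical that $\epsilon=-1$ ``forces $H_r>C_{\mathbb F}(r)$ via Theorem \ref{th-Hrrotational}-(ii)'' is a correct observation but is not needed once the classification of rotational hypersurfaces is available; and the final identification of $\Sigma$ with the model sphere is more cleanly done through Theorems \ref{th-classificationHr>0} and \ref{th-classificationHr>0SnxR} (strict convexity makes the height function Morse-type) than through an ad hoc appeal to the Maximum-Continuation Principle.
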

\begin{proof}
Since $\Sigma$ is compact, its  height function $\xi$ has a maximal point $x.$
This, together with the strict convexity of $\Sigma$, allows us to apply
\cite[Theorems 1 and 2]{delima} and conclude that $\Sigma$ is  embedded and homeomorphic to $\s^n$.
Thus, for $\epsilon=-1,$ the result follows from \cite[Theorem 7.6]{elbert-earp},  the Alexandrov-type
theorem  we mentioned in the introduction.

For $\epsilon=1,$ we can perform Alexandrov reflections on $\Sigma$ with respect to horizontal hyperplanes
$P_t:=\s^n\times\{t\}$ coming down  from above $\Sigma.$  For some $t_0<\xi(x),$
the reflection of the part of $\Sigma$ above $P_{t_0}$ will have a
first contact with $\Sigma.$ Then, by the Maximum-Continuation Principle, %and the analyticity of $\Sigma,$
$\Sigma$ is symmetric with respect to $P_{t_0}$.
Therefore, assuming $t_0=0$ and identifying
$\s^n\times\{0\}$ with $\s^n,$ we conclude that
$\Sigma$ is a ``bigraph'' over  its projection $\pi(\Sigma)$ to $\s^n$.
As a consequence, $\Sigma_0:=\Sigma\cap\s^n$ is the boundary of
$\pi(\Sigma)$ in $\s^n$.

By \cite[Lemma 1]{delima}, the second fundamental form of $\Sigma_{0}$,
as a hypersurface of \,$\s^n$, is
positive definite. In particular, $\Sigma_{0}$ is non totally geodesic in \,$\s^n$. Thus,
by \cite[Theorem 1]{docarmo-warner}, $\Sigma_{0}$ is contained in an open hemisphere $\s_+^n$
of $\s^n$, which implies that the same is true for $\pi(\Sigma),$
that is, $\Sigma\subset \s_+^n\times\R.$ In this setting,
we can apply  Alexandrov reflections on ``vertical hyperplanes'' $(\s^{n-1}\cap\s_+^n)\times\R,$ where $\s^{n-1}\subset\s^n$
is a totally geodesic $(n-1)$-sphere of $\s^n$\,,
and  conclude that $\Sigma$ is rotational.
\end{proof}

Let us show now that, regarding Theorem \ref{th-uniqueness},  the compactness hypothesis can be replaced
by completeness if we add a one point condition on the height function of $\Sigma.$ In the case $\epsilon=-1$,
we also have  to impose a condition on the second fundamental form of $\Sigma,$ which turns out to be a necessary hypothesis
(see Remark \ref{rem-necessaryhiphotesis}, below).

\begin{theorem} \label{th-uniquenessforcomplete}
Let $\Sigma$ be a complete   connected strictly convex  $H_r(>0)$-hypersurface of
$\q_\epsilon^n\times\R$ ($n\ge 3$) whose height function $\xi$ has a local extreme point.
For $\epsilon=-1$, assume further that the least principal curvature $k_1$ of $\Sigma$ is bounded away from zero.
Then, $\Sigma$ is an embedded rotational sphere.
\end{theorem}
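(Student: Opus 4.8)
The plan is to reduce the statement to the compact case already settled in Theorem~\ref{th-uniqueness}, using the convexity theorems of \cite{delima} together with the height estimate of Proposition~\ref{prop-vhe}. First, since $\Sigma$ is strictly convex and $\xi$ has a local extreme point (both hypothesis and conclusion being unaffected by a reflection in a horizontal hyperplane, it is irrelevant whether this extreme is a maximum or a minimum), the main results of \cite{delima} give that $\Sigma$ is properly embedded, bounds a convex body $\Omega\subset\q_\epsilon^n\times\R$, and is \emph{either} compact and homeomorphic to $\s^n$ \emph{or} a noncompact entire vertical graph over $\q_\epsilon^n$. In the compact case Theorem~\ref{th-uniqueness} applies and $\Sigma$ is an embedded rotational $H_r$-sphere, so it remains to discard the noncompact alternative. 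When $\epsilon=1$ this is immediate, since there is no noncompact graph over the compact manifold $\s^n$ (alternatively, one checks directly that the projection of $\Omega$ to $\s^n$ is a convex subset and that strict convexity then forces $\Omega$ to be bounded). Hence from now on $\epsilon=-1$ and, reflecting in a horizontal hyperplane if necessary, we may write $\Sigma=\mathrm{graph}(u)$ with $u\colon\h^n\to\R$ smooth and convex; the local extreme of $\xi$ is then a minimum of $u$, which we normalize to be $0$.

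The heart of the argument is to show that every sublevel set $\Omega_t:=\{x\in\h^n:u(x)\le t\}$ is bounded, and this is exactly where the hypothesis $\delta_0:=\inf_\Sigma k_1>0$ is used. Suppose some $\Omega_{t^\ast}$ were unbounded. Being closed and convex in $\h^n$, it would then contain a unit-speed geodesic ray $\gamma\colon[0,\infty)\to\h^n$ (take the limit of minimizing geodesics from a fixed point of $\Omega_{t^\ast}$ to a sequence of its points escaping to infinity). Put $f:=u\circ\gamma$ and $\beta(s):=(\gamma(s),f(s))\in\Sigma$. Since $\gamma$ is a geodesic of $\h^n$ and $\partial_t$ is parallel on $\h^n\times\R$, a direct computation yields $\overline\nabla_{\dot\beta}\dot\beta=f''(s)\,\partial_t$, so that $\langle A\dot\beta,\dot\beta\rangle=f''(s)\,\Theta(\beta(s))$. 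On the other hand $A$ is positive definite with least eigenvalue $k_1\ge\delta_0$, and $|\dot\beta|^2=1+f'(s)^2\ge 1$, whence $\langle A\dot\beta,\dot\beta\rangle\ge\delta_0$; since $|\Theta|\le 1$ and $f''\ge 0$ ($f$ being convex), this forces $f''(s)\ge\delta_0$ for all $s$, hence $f(s)\to+\infty$ — contradicting $f\le t^\ast$. Therefore every $\Omega_t$ is bounded, and consequently the cap $\Sigma_t:=\Sigma\cap\{\xi\le t\}=\mathrm{graph}(u|_{\Omega_t})$ is, for each $t>0$, a compact strictly convex vertical graph over $\Omega_t$ whose boundary lies in the horizontal hyperplane $\h^n\times\{t\}$.

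It remains to apply Proposition~\ref{prop-vhe}. After a reflection in $\h^n\times\{t\}$ followed by a downward translation by $t$ — both ambient isometries, hence preserving strict convexity and principal curvatures — the cap $\Sigma_t$ becomes a compact vertical graph of the nonnegative function $x\mapsto t-u(x)$ over $\Omega_t\subset\h^n\times\{0\}$, strictly convex up to its boundary (which now lies in $\h^n\times\{0\}$); its height function attains the maximum value $t-\min u=t$. Proposition~\ref{prop-vhe} therefore gives $t\le 1/\inf_{\Sigma_t}k_1\le 1/\delta_0$, and since this must hold for \emph{every} $t>0$ we reach an absurdity. Hence the noncompact alternative cannot occur, $\Sigma$ is compact, and Theorem~\ref{th-uniqueness} completes the proof.

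The step I expect to be the main obstacle is the boundedness of the sublevel sets of $u$ — equivalently, the control of $\Sigma$ near infinity — and it is precisely here that the curvature hypothesis $\inf_\Sigma k_1>0$ (for $\epsilon=-1$) is indispensable: the entire rotational graphs constructed in Theorem~\ref{th-Hrrotational}(ii) are complete, strictly convex, and have an interior minimum of the height function, yet are not rotational $H_r$-spheres, and their least principal curvature fails to be bounded away from zero (cf.\ Remark~\ref{rem-zerokn}). A secondary point requiring care is extracting from \cite{delima} the precise dichotomy used above (compact versus entire graph over the base); if only embeddedness and the convex-body property are quoted from there, one supplies the remaining step by noting that at the extreme point the horizontal tangent hyperplane supports $\Omega$, which confines $\Sigma$ to one of the two described shapes.
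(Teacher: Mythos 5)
Your overall strategy coincides with the paper's: quote \cite{delima} for embeddedness and topology, dispose of the compact case via Theorem \ref{th-uniqueness}, and kill the noncompact alternative by applying Proposition \ref{prop-vhe} to the caps cut off by horizontal hyperplanes. The geodesic-ray computation showing $f''\ge\delta_0$ along a ray contained in a sublevel set is correct and is an attractive, self-contained way to get compactness of the caps. The problem is the input you feed into it.

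The gap is the dichotomy you attribute to \cite{delima}: ``compact, or a noncompact \emph{entire} vertical graph over $\q_\epsilon^n$.'' What the paper extracts from \cite[Theorems 1 and 2]{delima} is only that $\Sigma$ is properly embedded and homeomorphic to $\s^n$ or to $\R^n$, with $\xi$ unbounded and having a single extreme point in the latter case; the graph structure of the caps is then \emph{derived}, by Alexandrov reflection with horizontal hyperplanes (if the part of $\Sigma$ above $P_t$ failed to be a graph with boundary in $P_t$, there would be a first height $t'$ at which $P_{t'}$ meets $\Sigma$ orthogonally, forcing $\Sigma$ to be symmetric about $P_{t'}$ and hence compact, contradicting unboundedness of $\xi$). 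Your entire-graph assumption is load-bearing in two places: it is the whole of your $\epsilon=1$ argument (a proper noncompact graph over a \emph{proper} open subset of $\s^n$, blowing up at the boundary of its domain, is not excluded by compactness of $\s^n$ alone; the paper instead invokes the height estimate of \cite[Theorem 4.1-(i)]{cheng-rosenberg}), and for $\epsilon=-1$ it is what lets you define $u$ on all of $\h^n$ and speak of its sublevel sets. Your proposed fallback --- that a horizontal supporting hyperplane of the convex body at the extreme point ``confines $\Sigma$ to one of the two described shapes'' --- does not close this: a convex body shaped like a half-infinite solid cylinder with a rounded cap has such a supporting hyperplane and a noncompact boundary with a single extreme point of $\xi$, yet its boundary has vertical points and is not a graph over the base. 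Excluding that behaviour is precisely the content of the reflection step (or some substitute for it), and it must appear in your proof. Once it is supplied, the remainder of your argument --- boundedness of the sublevel sets from $\inf_\Sigma k_1>0$, followed by $t\le 1/\inf_\Sigma k_1$ for every $t$ via Proposition \ref{prop-vhe} --- is sound and reaches the same contradiction as the paper.
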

\begin{proof}
As in the previous theorem, $\Sigma$ fulfills the hypotheses of \cite[Theorems 1 and 2]{delima}, which implies that
$\Sigma$ is properly embedded and homeomorphic to either
$\s^n$ or $\R^n.$ Furthermore, in the latter case,  the height function of $\Sigma$ is unbounded and has a single
extreme point $x,$ which we assume to be a maximum.

For  $\epsilon=1,$  the height estimates obtained
in \cite[Theorem 4.1-(i)]{cheng-rosenberg} forbid $\xi$ to be unbounded. Thus, in this case,
$\Sigma$ is homeomorphic to $\s^n$ and the result follows from Theorem \ref{th-uniqueness}.

Let us consider now the case $\epsilon=-1.$
Assume, by contradiction, that $\Sigma$ is homeomorphic to $\R^n,$  so that
$\xi$ is unbounded below. Hence, given a  horizontal hyperplane
$P_t=M\times\{t\}$ with $t<\xi(x),$  the part $\Sigma_t^+$ of $\Sigma$ which lies above $P_t$
must be a vertical graph with boundary in $P_t$.
If not, for some $t'$ between $t$ and $\xi(x),$ $P_{t'}$ would be orthogonal
to $\Sigma$ at one of its points. Then, the Alexandrov reflection method  would give that $\Sigma$ is
symmetric with respect to $P_{t'},$ which is impossible, since we are assuming
$\xi$   unbounded, and the closure of $\Sigma_{t'}^+$ in $\Sigma$ is compact.

It follows from the above that, for
$|t|$  sufficiently large, one has
\[\xi(x)-t>\frac{1}{\inf_\Sigma k_1}\ge\frac{1}{\inf_{\Sigma_t^+} k_1}\,,\]
which clearly
contradicts Proposition \ref{prop-vhe}. Therefore, $\Sigma$ is homeomorphic to
$\s^n$ and, again, the result follows from Theorem \ref{th-uniqueness}.
\end{proof}

\begin{remark}
In Theorems \ref{th-uniqueness} and \ref{th-uniquenessforcomplete}, the hypothesis of
strict convexity of $\Sigma$ is automatically satisfied for $r=n,$ so  it can be dropped in this case.
Indeed, in both theorems,  the height function $\xi$ has a critical point $x\in\Sigma$, which can be assumed to be a maximum.
Then, taking the inward orientation on $\Sigma,$
we have that  $\theta(x)=-1,$ which, together with equality \eqref{eq-hessianheightfunction}, yields
\[\langle AX,X\rangle=-{\rm Hess}\,\xi(X,X)\ge 0 \,\,\, \forall X\in T_{x}\Sigma.\]
However, $H_n=\det A>0$ on $\Sigma$. Thus, at $x,$ and then on all of $\Sigma$,
the second fundamental form  is positive definite, that is, $\Sigma$ is strictly convex.
%For $r<n$ and $\epsilon=1,$ Theorem \ref{th-delaunaytype} shows that the hypothesis on the height function
%$\xi$ is necessary for the conclusion.
\end{remark}

\begin{remark} \label{rem-necessaryhiphotesis}
It follows from the considerations of
Remark \ref{rem-zerokn} that,
for  $r<n,$ the hypothesis on the least principal curvature
of $\Sigma$ in Theorem \ref{th-uniquenessforcomplete} is necessary for the conclusion.
As shown by Theorem \ref{th-delaunaytypesn}, the same is true for the hypothesis on the height function
$\xi$ in the case $\epsilon=1$ and  $r<n.$
\end{remark}

Next, we consider the dual case of  Theorem \ref{th-uniquenessforcomplete}, assuming
now that the height function of the hypersurface $\Sigma$ has no critical points. First, we recall
that a  hypersurface $\Sigma\subset \h^n\times\R$
is said to be \emph{cylindrically bounded}, if there exists a closed geodesic ball
$B\subset\h^n$ such that $\Sigma\subset B\times\R.$

%Our next result  contrasts with the  previous theorem
%regarding the assumption on the height function of $\Sigma.$

\begin{theorem} \label{th-uniqueness2}
Let $\Sigma$ be a proper, convex,   connected  $H_r(> 0)$-hypersurface of
\,$\q_\epsilon^n\times\R$ ($n\ge 3$) with no horizontal points.
For $\epsilon=-1,$ assume further that $\Sigma$ is cylindrically
bounded. Then, $\Sigma$ is a cylinder over a geodesic sphere
of \,$\q_\epsilon^n$. In particular,  $r<n.$
\end{theorem}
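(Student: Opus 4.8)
The plan is to avoid reflection arguments and show directly that the closed convex region bounded by $\Sigma$ is a vertical cylinder, and then to identify its base via the classical Alexandrov‑type theorem for space forms. First I would record the easy consequences of the hypotheses: since $\Sigma$ has no horizontal points, its height function $\xi$ has no critical points, so $\xi$ is a submersion onto an open interval; and since $\Sigma$ is proper, convex and connected (and, when $\epsilon=-1$, cylindrically bounded), the convexity results of \cite{delima} give that $\Sigma$ is embedded and bounds a closed geodesically convex region $C\subset\q_\epsilon^n\times\R$, with $\overline{\pi_{\q_\epsilon^n}(C)}$ compact (automatically if $\epsilon=1$, from cylindrical boundedness if $\epsilon=-1$). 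Since $\xi$ is affine along every geodesic of $\q_\epsilon^n\times\R$, it has no interior local extremum on $C$; combining this with the absence of critical points of $\xi|_\Sigma$ and the compactness of $\overline{\pi_{\q_\epsilon^n}(C)}$ — if $\sup_C\xi$ were finite it would be attained, by closedness of $C$, necessarily on $\Sigma=\partial C$, contradicting that $\xi|_\Sigma$ has no critical point — one gets $\sup_C\xi=+\infty$ and $\inf_C\xi=-\infty$.

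Next I would prove that $C$ contains a complete vertical line. Fixing $x_0=(q_0,t_0)\in C$, for each large $R$ there is a point $(p_R,R)\in C$; on the geodesic segment from $x_0$ to $(p_R,R)$, which lies in $C$ by convexity, take the point $z_R$ of height $t_0+1$. Its distance in $\q_\epsilon^n$ from $q_0$ is at most $\operatorname{diam}\!\big(\overline{\pi_{\q_\epsilon^n}(C)}\big)/(R-t_0)$, which tends to $0$, so $z_R\to(q_0,t_0+1)$ and, $C$ being closed, $(q_0,t_0+1)\in C$. Iterating upwards and, using $\inf_C\xi=-\infty$, downwards yields $\{q_0\}\times\R\subset C$. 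Hence $\partial_t$ and $-\partial_t$ are both recession directions of the convex set $C$, so $C$ is invariant under vertical translations, i.e.\ $C=C_0\times\R$ with $C_0:=C\cap(\q_\epsilon^n\times\{0\})$ convex. Consequently $\Sigma=\partial C=(\partial C_0)\times\R=:M_0\times\R$ is a cylinder over the hypersurface $M_0:=\partial C_0$ of $\q_\epsilon^n$, and $M_0$ is compact (since $\overline{C_0}$ is), connected (since $\Sigma$ is), embedded and convex.

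Finally I would identify $M_0$. By the cylinder formulas recalled in Section \ref{sec-rotationalHrhyp}, $H_r(\Sigma)=H_r(M_0)$ for $r<n$ while $H_n(\Sigma)=0$; since $H_r(\Sigma)>0$, necessarily $r<n$, which gives the last assertion of the statement. Orienting $M_0$ so that all of its principal curvatures are nonnegative, the positivity of $H_r$ forces at least $r$ of them to be positive at each point, so $M_0$ is nowhere totally geodesic; for $\epsilon=1$ this implies, by \cite[Theorem 1]{docarmo-warner}, that $M_0$ lies in an open hemisphere of $\s^n$. The Montiel--Ros Alexandrov‑type theorem (\cite{montiel-ros}; see the Introduction) then applies to the compact, connected, embedded $H_r$‑hypersurface $M_0$ of $\q_\epsilon^n$ and shows that $M_0$ is a geodesic sphere; hence $\Sigma=M_0\times\R$ is a cylinder over a geodesic sphere, as claimed.

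The step I expect to be the main obstacle is the structural input of the first paragraph — that $\Sigma$ is embedded and bounds a closed convex region whose horizontal projection has compact closure — which is precisely where the results of \cite{delima} (and, for $\epsilon=-1$, the Hadamard structure of $\h^n\times\R$) do the real work; once that is in place, the rest is elementary convex geometry together with the classical rigidity theorem for $H_r$‑hypersurfaces of the space forms.
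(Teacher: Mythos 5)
Your proof is correct in substance and finishes exactly as the paper does, but the way you obtain the cylinder structure is genuinely different. The paper's argument is a two-step citation: \cite[Theorem 3]{delima} is invoked to conclude outright that $\Sigma=\Sigma_0\times\R$ with $\Sigma_0$ an embedded convex topological sphere of $\q_\epsilon^n$ (contained in an open hemisphere when $\epsilon=1$), after which the principal curvatures of a cylinder give $H_n=0$, hence $r<n$, and the Montiel--Ros/Korevaar Alexandrov-type theorem identifies $\Sigma_0$ as a geodesic sphere --- this is precisely your last paragraph. You instead assume only that \cite{delima} yields embeddedness and a closed convex region $C$ bounded by $\Sigma$, and derive the splitting $C=C_0\times\R$ by hand: the absence of horizontal points forces $\xi$ to be unbounded on $C$ in both directions, a limit of chords then places a full vertical line through every point of $C$ (your estimate $d_{\q_\epsilon^n}(z_R,q_0)\le \mathrm{diam}\big(\overline{\pi_{\q_\epsilon^n}(C)}\big)/(R-t_0)$ is correct for geodesics of a product), and convexity upgrades this to vertical invariance; your appeal to \cite[Theorem 1]{docarmo-warner} for the hemisphere containment is the same device the paper uses in the proof of Theorem \ref{th-uniqueness}. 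What your route buys is transparency: the reduction to a hypersurface of $\q_\epsilon^n$ becomes elementary convex geometry rather than a black box. What it costs is that the structural input you attribute to \cite{delima} --- that a merely convex (not strictly convex), proper, connected $H_r(>0)$-hypersurface with no horizontal points is embedded and bounds a closed geodesically convex body --- is not among the statements this paper actually quotes from that reference (its Theorems 1 and 2 are used only under strict convexity plus a height extremum, and its Theorem 3 already asserts the cylinder conclusion). So the one point you must still verify is that \cite{delima}, or some other source, really supplies the convex-body statement in this non-strict, non-compact setting; everything downstream of that is sound.
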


\begin{proof}
From the hypothesis and \cite[Theorem 3]{delima},
$\Sigma=\Sigma_0\times\R,$ where $\Sigma_0$ is an embedded convex
topological sphere of $\q_\epsilon^n$\,.  Moreover, in the
case $\epsilon=1,$ $\Sigma_0$ is contained in an open hemisphere of $\s^n.$

At a given  point  $x\in \Sigma,$ the principal curvatures
are $k_1, \,\dots ,k_{n-1}, 0,$ where $k_1\,, \dots ,k_{n-1}$ are the principal curvatures of
$\Sigma_0\subset\q_\epsilon^n$  at $\pi_{\q^n_\epsilon}(x)\in\Sigma_0$\,. In particular,
$\Sigma_0$ has constant $r$-th mean curvature $H_r$ if $r<n,$ which implies that, in this case,
$\Sigma_0$ is a geodesic sphere of $\q_\epsilon^n$ (see \cite{korevaar, montiel-ros}). Also,
$H_n=0$ on $\Sigma$, so we must have $r<n,$ since
we are assuming $H_r>0.$
\end{proof}

Since a cylinder $\Sigma_0\times\R\subset\s^n\times\R$ is nowhere strictly convex,
it follows from the above theorem that, for $n\ge 3,$  a connected, proper, and  strictly convex $H_r$-hypersurface
of $\s^n\times\R$ must have a horizontal point. This fact, together with Theorem \ref{th-uniquenessforcomplete},
gives our last result:

\begin{corollary} \label{cor-uniqueness}
For $n\ge 3,$ any connected,  properly immersed, and strictly convex $H_r(>0)$-hypersurface of  \,$\s^n\times\R$
is necessarily  an embedded rotational $H_r$-sphere.
\end{corollary}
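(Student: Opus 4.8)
The plan is to deduce the statement from Theorem~\ref{th-uniquenessforcomplete} in the case $\epsilon=1$, for which the only additional input needed is that the height function of $\Sigma$ has a \emph{local extreme point}. Thus the entire argument reduces to producing a horizontal point on $\Sigma$, and the tool for that will be Theorem~\ref{th-uniqueness2}: a proper, convex, connected $H_r(>0)$-hypersurface of $\s^n\times\R$ \emph{without} horizontal points must be a cylinder over a geodesic sphere of $\s^n$, and such a cylinder is nowhere strictly convex.

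First I would observe that a properly immersed hypersurface of the complete manifold $\s^n\times\R$ is complete in the induced metric: an isometric immersion is distance non-increasing onto its image, so a Cauchy sequence in $\Sigma$ has image contained in a metric ball of $\s^n\times\R$ with compact closure, and properness then confines the sequence to a compact subset of $\Sigma$, yielding a convergent subsequence and hence convergence of the whole Cauchy sequence. So $\Sigma$ is complete, connected and strictly convex. Now suppose, for contradiction, that $\Sigma$ has no horizontal point. Since strict convexity implies convexity, $\Sigma$ satisfies all the hypotheses of Theorem~\ref{th-uniqueness2} with $\epsilon=1$ --- note that in this case the cylindrical-boundedness assumption is vacuous --- so $\Sigma=\Sigma_0\times\R$ for some embedded convex topological sphere $\Sigma_0\subset\s^n$. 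But on any such cylinder $\partial_t$ is a principal direction with vanishing principal curvature, so $\Sigma$ is nowhere strictly convex, contradicting our hypothesis. Hence $\Sigma$ has a horizontal point $x$.

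At $x$ one has $\Theta(x)=\pm 1$, so by \eqref{eq-hessianheightfunction} the Hessian of the height function $\xi$ at $x$ equals $\pm A_x$, which is definite because $\Sigma$ is strictly convex; therefore $x$ is a local extreme point of $\xi$. Thus $\Sigma$ is a complete, connected, strictly convex $H_r(>0)$-hypersurface of $\s^n\times\R$ ($n\ge 3$) whose height function has a local extreme point, and Theorem~\ref{th-uniquenessforcomplete} (case $\epsilon=1$) applies to give that $\Sigma$ is an embedded rotational $H_r$-sphere. I do not expect a genuine obstacle here; the only points meriting a little care are verifying that the hypotheses of Theorem~\ref{th-uniqueness2} are met with no superfluous assumption --- in particular that for $\s^n\times\R$ no cylindrical bound is required --- and the (standard) passage from properness of the immersion to completeness of $\Sigma$ in the induced metric.
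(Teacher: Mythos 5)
Your proposal is correct and follows essentially the same route as the paper: the paper also invokes Theorem \ref{th-uniqueness2} to conclude that a proper, strictly convex $H_r(>0)$-hypersurface of $\s^n\times\R$ must have a horizontal point (since the cylinders produced by that theorem are nowhere strictly convex), and then applies Theorem \ref{th-uniquenessforcomplete}. Your additional remarks --- that properness of the immersion yields completeness, and that a horizontal point of a strictly convex hypersurface is a local extreme of the height function via \eqref{eq-hessianheightfunction} --- are correct fillings-in of steps the paper leaves implicit.
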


\section{Acknowledgments}
We are indebt to Antonio Martinez, Pablo Mira, and Miguel Domínguez-Vázquez
for their  valuable suggestions.
Fernando Manfio is supported by Fapesp, grant 2016/23746-6.
João Paulo dos Santos is supported by FAPDF, grant 0193.001346/\\2016.

\end{document}